\def\textmatrix#1&#2\\#3&#4\\{\bigl({#1 \atop #3}\ {#2 \atop #4}\bigr)}
\def\dispmatrix#1&#2\\#3&#4\\{\left({#1 \atop #3}\ {#2 \atop #4}\right)}
\newcommand{\beg}{\begin{equation}}
	\newcommand{\eeg}{\end{equation}}
\newcommand{\ben}{\begin{eqnarray*}}
	\newcommand{\een}{\end{eqnarray*}}
\newtheorem{thm}{Theorem}[section]
\newtheorem{lem}[thm]{Lemma}
\newtheorem{prop}[thm]{Proposition}
\numberwithin{equation}{section} \theoremstyle{definition}
\newtheorem{defn}[thm]{Definition}
\newtheorem{rem}[thm]{Remark}
\newtheorem{eg}[thm]{Example}
\newcommand{\C}{\mathbb{C}}
\newcommand{\A}{\mathbb{A}}
\newcommand{\CA}{\overline{\mathbb{A}}}
\newcommand{\D}{\mathbb{D}}
\newcommand{\T}{\mathbb{T}}
\newcommand{\N}{\mathbb{N}}
\newcommand{\Z}{\mathbb{Z}}
\newcommand{\HS}{\mathcal{H}}
\newcommand{\DC}{\overline{\mathbb{D}}}
\def\textmatrix#1&#2\\#3&#4\\{\bigl({#1 \atop #3}\ {#2 \atop #4}\bigr)}
\def\dispmatrix#1&#2\\#3&#4\\{\left({#1 \atop #3}\ {#2 \atop #4}\right)}
\title[Decompositions of operators in an annulus]{JOINT REDUCING SUBSPACES AND ORTHOGONAL DECOMPOSITIONS OF OPERATORS IN AN ANNULUS}
\author[Pal and Tomar]{SOURAV PAL AND NITIN TOMAR}
\address[Sourav Pal]{Mathematics Department, Indian Institute of Technology Bombay,
	Powai, Mumbai - 400076, India.} \email{souravpal@iitb.ac.in , sourav@math.iitb.ac.in}
\address[Nitin Tomar]{Mathematics Department, Indian Institute of Technology Bombay, Powai, Mumbai-400076, India.} \email{tnitin@math.iitb.ac.in}		
\keywords{$\mathbb{A}_r$-contraction, $\mathbb{A}_r$-isometry, $\mathbb{A}_r$-unitary, von Neumann-Wold decomposition, canonical decomposition}	
\subjclass[2010]{47A10, 47A15, 47A25, 47A65}	
\begin{document}

	\maketitle
	
	\begin{abstract}
		A commuting tuple of Hilbert space operators $(T_1, \dotsc, T_n)$ is said to be an \textit{$\mathbb{A}_r^n$-contraction} if the closed polyannulus $\overline{\mathbb A}_r^n$ is a spectral set for it, where $\mathbb A_r=\{ z\in \mathbb C\,:\, r < |z|<1 \}$ for $0<r<1$. We use the term \textit{$\A_r$-contraction} for an $\mathbb A_r^1$-contraction. We provide characterizations for the $\mathbb A_r^n$-unitaries and $\mathbb A_r^n$-isometries and decipher their structures. We find a canonical decomposition for any family of doubly commuting $\mathbb A_r$-contractions and as a special case obtain a Wold type decomposition for doubly commuting $\mathbb A_r$-isometries. We find analogous decompositions for c.n.u. $\mathbb A_r$-contractions. Also, we present a different canonical decomposition for any family of commuting $\mathbb A_r$-contractions and consequently achieve a Wold type decomposition for commuting $\mathbb A_r$-isometries.		
	\end{abstract} 
	
	\section{Introduction}\label{Intro}
	
	
\noindent Throughout the paper, all operators are bounded linear operators acting on complex Hilbert spaces. A contraction is an operator with norm not greater than 1. We denote by $\C, \D$ and $\T$ the complex plane, the unit disk and the unit circle in the complex plane respectively, with center at the origin. For a fixed $r \in (0,1)$, the polyannulus $\A_r^n$ is the cartesian product of $n$ copies of annulus $\A_r$ , where 
	\[
	\A_r:=\left\{z \in \C: r<|z|<1 \right\}.
	\]
	In this paper, we study an operator having $\CA_r$ as a spectral set or more generally an $n$-tuple of commuting operators that has the closed polyannulus $\CA_r^n$ as a spectral set. 
	\begin{defn}
		A commuting tuple of Hilbert space operators $(T_1, \dotsc, T_n)$ is said to be a $\A_r^n$-\textit{contraction} if $\CA_r^n$ is a spectral set for $(T_1, \dotsc, T_n)$, which is to say that the Taylor-joint spectrum $\sigma_T(T_1, \dotsc, T_n) \subseteq \CA_r^n$ and von Neumann's inequality holds, i.e.
		\begin{equation*}
		\|f(T_1, \dotsc, T_n)\| \leq \sup_{z \in \CA_r^n}|f(z)|=\|f\|_{\infty, \CA_r^n}
		\end{equation*}
		for every rational function $f=p\slash q$, where $p,q \in \C[z_1, \dotsc, z_n]$ with $q$ having no zeros in  $\CA_r^n$. An \textit{$\A_r$-contraction} is an $\A_r^{1}$-contraction. 
	\end{defn}
Operator theory on an annulus has a rich literature, e.g. see \cite{Agler, Bello, 	DritschelI, Pas-McCull, comm lifting, N-S1, N-S2, N-S3, Tsikalas}. Most of the one-variable operator theory is about studying a contraction, see \cite{Nagy} and the references therein. A celebrated theorem due to von Neumann \cite{Neumann} states that an operator is a contraction if and only if the closed unit disc $\DC$ is a spectral set for it. This inspiring result motivates one to study operators having other domains (like annulus, multiply connected domain, polyannulus etc.) as spectral sets. An annulus is not a polynomially convex domain unlike a disc. For this reason the spectrum condition, i.e. $\sigma_T(T_1, \dots , T_n) \subseteq \overline{\A}_r^n$ plays a crucial role in the theory of $\A_r^n$-contractions. For example, if $\DC$ is a spectral set for $T$ then $\DC$ is a spectral set for $T^n$ for any positive integer $n$ which is not true if $\overline{\A}_r$ is a spectral set for $T$, because, then the spectrum $\sigma(T^n)$ may not be contained in $\overline{\A}_r$. Unitaries and isometries are special classes of contractions. A unitary is a normal operator with its spectrum lying in $\T$ and an isometry is the restriction of a unitary to an invariant subspace. In an analogous manner, we define unitary and isometry associated with the polyannulus $\A_r^n$. 
\begin{defn}
	Let $T_1, \dotsc, T_n$ be commuting operators acting on a Hilbert space $\mathcal{H}$. Then the tuple $\underline{T}=(T_1, \dotsc, T_n)$ is said to be
	\begin{enumerate}
		\item[(i)] an \textit{$\A_r^n$-unitary} if each $T_j$ is a normal operator and $\sigma_T(\underline{T})$ lies in the distinguished boundary of $\CA_r^n$;
		
		\item[(ii)] an \textit{$\A_r^n$-isometry} if $\sigma_T(\underline{T}) \subseteq \CA_r^n$ and there is an $\A_r^n$-unitary $(U_1, \dotsc, U_n)$ on a Hilbert space $\mathcal{K} \supseteq \HS$ such that $\HS$ is a joint invariant subspace for $U_1, \dots , U_n$ and that $U_j|_\mathcal{H}=T_j$ for $1 \leq j \leq n$.    
	\end{enumerate}
\end{defn}
\noindent The aim of this article is two-fold. First, we find several characterizations for the $\A_r^n$-unitaries and $\A_r^n$-isometries and secondly we determine joint reducing subspaces and orthogonal decompositions for any family of commuting and doubly commuting $\A_r$-contractions. These decompositions are analogous to Wold decomposition of a family of isometries and canonical decomposition of commuting and doubly commuting contractions. Note that a commuting family of operators $\{ T_{\lambda}\,:\, \lambda \in J \}$ acting on a Hilbert space $\HS$ is said to be \textit{doubly commuting} if $T_{\alpha}$ commutes with $T_{\beta}^*$ for every $\alpha , \beta \in J$ with $\alpha \neq \beta$. Also, a closed linear subspace $\HS_1$ of $\HS$ is called a \textit{joint reducing subspace} or \textit{common reducing subspace} for $\{ T_{\lambda}\,:\, \lambda \in J \}$ if $\HS_1$ is an invariant under both $T_{\lambda}$ and $T_{\lambda}^*$ for all $\lambda \in J$.
	\subsection{Motivation}
	 The existence of a common reducing subspace for a family of commuting operators has always been of great interests, e.g. see classic \cite{Nagy} and the references therein. The initial groundbreaking work in this direction was von Neumann-Wold decomposition of an isometry. 
	 
\begin{thm}[von Neumann-Wold, \cite{Wold}]
	Let $V$ be an isometry on a Hilbert space $\mathcal{H}$. Then there is an orthogonal decomposition of $\mathcal{H}$ into an orthogonal sum of two subspaces reducing $V$, say $\mathcal{H}=\mathcal{H}_0 \oplus \mathcal{H}_1$ such that $V|_{\mathcal{H}_{0}}$ is a unitary and $V|_{\mathcal{H}_1}$ is a pure isometry, i.e. a unilateral shift. This decomposition is uniquely determined. 
\end{thm}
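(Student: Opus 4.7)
The plan is to construct the decomposition through the \emph{wandering subspace} of $V$. Since $V$ is an isometry, $V\mathcal{H}$ is a closed subspace of $\mathcal{H}$, and I would set $\mathcal{L} := \mathcal{H} \ominus V\mathcal{H} = \ker V^*$. Iterating the orthogonal decomposition $\mathcal{H} = V\mathcal{H} \oplus \mathcal{L}$ and using the fact that $V$ preserves inner products yields, for each $n \geq 1$, the identity
\[
\mathcal{H} = V^n \mathcal{H} \oplus \bigoplus_{k=0}^{n-1} V^k \mathcal{L},
\]
where the mutual orthogonality of the summands $V^k \mathcal{L}$ follows from the isometry of $V$ combined with the definition of $\mathcal{L}$. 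This identity is the main calculational backbone of the argument.

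I would then define
\[
\mathcal{H}_1 := \bigoplus_{n=0}^{\infty} V^n \mathcal{L}, \qquad \mathcal{H}_0 := \mathcal{H} \ominus \mathcal{H}_1,
\]
and note that taking orthogonal complements in the identity above and letting $n \to \infty$ gives $\mathcal{H}_0 = \bigcap_{n=0}^{\infty} V^n \mathcal{H}$. By construction, $\mathcal{H}_1$ is $V$-invariant and $V|_{\mathcal{H}_1}$ is a unilateral shift with generating wandering subspace $\mathcal{L}$. For $\mathcal{H}_0$, the characterization as $\bigcap_n V^n \mathcal{H}$ gives $V \mathcal{H}_0 \subseteq \mathcal{H}_0$, while the fact that every vector in $\mathcal{H}_0$ lies in $V\mathcal{H}$ and has a unique preimage (unique because $V$ is one-to-one) which itself must lie in every $V^n\mathcal{H}$ shows $\mathcal{H}_0 \subseteq V\mathcal{H}_0$; hence $V|_{\mathcal{H}_0}$ is a surjective isometry, that is, a unitary. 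Invariance under $V^*$ then follows, so $\mathcal{H}_0$ reduces $V$.

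For uniqueness, I would suppose $\mathcal{H} = \mathcal{H}_0' \oplus \mathcal{H}_1'$ is another such decomposition. Since $V|_{\mathcal{H}_0'}$ is unitary, $\mathcal{H}_0' = V^n \mathcal{H}_0' \subseteq V^n \mathcal{H}$ for every $n$, forcing $\mathcal{H}_0' \subseteq \mathcal{H}_0$. Conversely, a pure shift $V|_{\mathcal{H}_1'}$ satisfies $\bigcap_n V^n \mathcal{H}_1' = \{0\}$, and combining this with the reducing property of $\mathcal{H}_0'$ yields $\mathcal{H}_0 \subseteq \mathcal{H}_0'$, giving equality of both components.

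The step I expect to be most delicate is verifying that the preimage argument works for $V|_{\mathcal{H}_0}$: showing that each vector in $\bigcap_n V^n \mathcal{H}$ has its (unique) $V$-preimage lying in the same intersection uses the isometry hypothesis critically, and is really where the unitarity of the restriction is earned rather than assumed. The orthogonality bookkeeping behind the iterated identity, while routine, also must be handled carefully to guarantee the infinite orthogonal sum defining $\mathcal{H}_1$ is genuinely a closed reducing subspace.
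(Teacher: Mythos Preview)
Your argument is the standard wandering-subspace proof of the Wold decomposition and is correct in all essentials. Note, however, that the paper does not supply its own proof of this theorem: it is quoted in the Introduction as a classical result (attributed to von Neumann and Wold, with a citation) purely to motivate the analogous decompositions for $\mathbb{A}_r$-isometries developed later. There is therefore nothing in the paper to compare your proof against; what you have written is precisely the textbook argument one finds, for instance, in Sz.-Nagy--Foias.
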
	
 Indeed, the space $\mathcal{H}_0$ is chosen to be the maximal reducing subspace on which $V$ acts as a unitary. Subsequently, the idea of identifying the maximal unitary part was extended to any contraction, which is famously known as the \textit{canonical decomposition} of a contraction. 	
	\begin{thm}[\cite{Nagy-Foias-001} \& \cite{Langer-1}]\label{thm104}
		For every  contraction $T$ on a Hilbert space $\mathcal{H}$, there is a unique  decomposition of $\mathcal{H}$ into an orthogonal sum of two subspaces reducing $T$, say, $\mathcal{H}=\mathcal{H}_0 \oplus \mathcal{H}_1$ such that $T|_{\mathcal{H}_0}$ is a unitary and $T|_{\mathcal{H}_1}$ is a completely non-unitary (c.n.u.) contraction. Either of the subspaces $\mathcal{H}_0$ or $\mathcal{H}_1$ may coincide with the zero subspace.  Indeed, $\mathcal{H}_0$ is the set of all elements in $\mathcal{H}$ such that 
		\[
		\|T^nh\|=\|h\|=\|T^{*n}h\|, \quad n=1,2, \dotsc \ .
		\]  
	\end{thm}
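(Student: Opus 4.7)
The plan is to take the set $\mathcal{H}_0 = \{h \in \mathcal{H} : \|T^n h\| = \|h\| = \|T^{*n}h\|, \; n \geq 1\}$ as the candidate for the unitary part and verify successively that it is a closed subspace, reducing for $T$, that $T$ acts unitarily on it, and that its orthogonal complement carries a completely non-unitary contraction, and finally that these properties characterize $\mathcal{H}_0$ uniquely.

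First I would convert the defining conditions into kernel conditions. Since $T$ is a contraction, the operators $I - T^{*n}T^n$ and $I - T^n T^{*n}$ are positive. The identity $\|T^n h\|^2 = \|h\|^2$ is equivalent to $\langle (I - T^{*n}T^n)h, h\rangle = 0$, which for a positive operator forces $(I - T^{*n}T^n)h = 0$. Therefore
\[
\mathcal{H}_0 \;=\; \bigcap_{n \geq 1} \ker(I - T^{*n}T^n) \;\cap\; \bigcap_{n \geq 1} \ker(I - T^n T^{*n}),
\]
which is manifestly a closed linear subspace of $\mathcal{H}$.

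Next I would show that $\mathcal{H}_0$ reduces $T$. For $h \in \mathcal{H}_0$, one has $T^*T h = h$ and $TT^*h = h$, so $\|T(Th)\|=\cdots=\|T^{n+1}h\|=\|h\|=\|Th\|$ gives $Th \in \mathcal{H}_0$ once one also checks $\|T^{*n}(Th)\| = \|Th\|$; this is immediate by the identity $T^{*n}Th = T^{*(n-1)}(T^*T)h = T^{*(n-1)}h$. The same argument applied to $T^*$ gives $T^*h \in \mathcal{H}_0$. Thus $\mathcal{H}_0$ is reducing, and on it $T$ is both left and right invertible by its adjoint, hence unitary. Setting $\mathcal{H}_1 := \mathcal{H} \ominus \mathcal{H}_0$, any reducing subspace $\mathcal{K} \subseteq \mathcal{H}_1$ on which $T$ is unitary would satisfy $\|T^n k\| = \|k\| = \|T^{*n}k\|$ for all $k \in \mathcal{K}$, forcing $\mathcal{K} \subseteq \mathcal{H}_0 \cap \mathcal{H}_1 = \{0\}$. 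Hence $T|_{\mathcal{H}_1}$ is c.n.u.

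For uniqueness, given another decomposition $\mathcal{H} = \mathcal{H}_0' \oplus \mathcal{H}_1'$ with $T|_{\mathcal{H}_0'}$ unitary and $T|_{\mathcal{H}_1'}$ c.n.u., every vector in $\mathcal{H}_0'$ satisfies the defining norm equalities of $\mathcal{H}_0$, so $\mathcal{H}_0' \subseteq \mathcal{H}_0$. The subspace $\mathcal{H}_0 \ominus \mathcal{H}_0' \subseteq \mathcal{H}_1'$ is then reducing for $T$ and $T$ acts unitarily on it; since $T|_{\mathcal{H}_1'}$ is c.n.u., this forces $\mathcal{H}_0 = \mathcal{H}_0'$. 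The main technical point — and the only step requiring care beyond bookkeeping — is the passage from the scalar equality $\|T^n h\| = \|h\|$ to the operator identity $T^{*n}T^n h = h$, which crucially uses the contractivity of $T$ through positivity of $I - T^{*n}T^n$; without this, neither linearity nor closedness of $\mathcal{H}_0$ would be evident.
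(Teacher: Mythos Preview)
Your proof is correct and follows the classical Nagy--Foias/Langer argument. However, note that the paper does not actually give its own proof of this statement: Theorem \ref{thm104} is quoted in the Introduction as a known result from \cite{Nagy-Foias-001} and \cite{Langer-1}, and is used throughout as background. So there is no proof in the paper to compare against; your argument is essentially the standard one found in those references (and reproduced, for instance, in \cite{Nagy}), including the key observation that the norm condition $\|T^n h\|=\|h\|$ is equivalent to $h\in\ker(I-T^{*n}T^n)$ via positivity.
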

Note that a contraction $T$ on a Hilbert space $\HS$ is said to be \textit{completely non-unitary} or simply \textit{c.n.u.} if there is no non-zero closed linear subspace of $\HS$ that reduces $T$ and on which $T$ acts a unitary. Naturally, it is rather easier to study the orthogonal parts of a contraction than the operator itself. In \cite{Levan}, Levan went one step ahead and decompose further a c.n.u. contraction into two orthogonal parts.
\begin{thm}[\cite{Levan}, Theorem 1]\label{Levan}
		Let $T$ be a c.n.u. contraction acting on a Hilbert space $\mathcal{H}$. Then $\mathcal{H}$ decomposes into an orthogonal sum $\mathcal{H}=\mathcal{H}_1\oplus \mathcal{H}_2$ such that $\mathcal{H}_1, \mathcal{H}_2$ reduce $T$ and that $T|_{\mathcal{H}_1}$ is a pure isometry and $T|_{\mathcal{H}_2}$ is a completely non-isometry (c.n.i.) contraction.
\end{thm}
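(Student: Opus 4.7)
The strategy mirrors the proof of the canonical decomposition (Theorem~\ref{thm104}): I would identify $\mathcal{H}_1$ as the maximal reducing subspace of $T$ on which $T$ acts isometrically, apply the von Neumann--Wold theorem together with the c.n.u.\ hypothesis to conclude that this isometry is automatically pure, and finally use the maximality of $\mathcal{H}_1$ to deduce that $T$ restricted to $\mathcal{H}_1^\perp$ is c.n.i.

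First I would show that the family
\[
\mathcal{F} \;=\; \{\, M \subseteq \mathcal{H} : M \text{ is a closed subspace that reduces } T \text{ and } T|_M \text{ is an isometry}\,\}
\]
admits a largest element $\mathcal{H}_1$. The key observation is that $M \in \mathcal{F}$ if and only if $(I-T^*T)|_M = 0$. Given $M,N \in \mathcal{F}$ and arbitrary $m \in M$, $n \in N$, one has $\langle Tm, Tn\rangle = \langle T^*Tm, n\rangle = \langle m, n\rangle$, so that
\[
\|T(m+n)\|^{2} \;=\; \|m\|^{2} + 2\,\mathrm{Re}\langle m,n\rangle + \|n\|^{2} \;=\; \|m+n\|^{2}.
\]
Consequently the closed linear span of any subfamily of $\mathcal{F}$ again belongs to $\mathcal{F}$, and the closed span of the full family is the desired largest element $\mathcal{H}_1$.

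Next, since $T|_{\mathcal{H}_1}$ is an isometry, the von Neumann--Wold theorem gives an orthogonal decomposition $\mathcal{H}_1 = \mathcal{H}_{1,u}\oplus\mathcal{H}_{1,p}$ reducing $T|_{\mathcal{H}_1}$, with $T|_{\mathcal{H}_{1,u}}$ unitary and $T|_{\mathcal{H}_{1,p}}$ a pure isometry. As $\mathcal{H}_1$ itself reduces $T$, both summands reduce $T$; the c.n.u.\ hypothesis on $T$ therefore forces $\mathcal{H}_{1,u} = \{0\}$, so that $T|_{\mathcal{H}_1}$ is a pure isometry. Finally, set $\mathcal{H}_2 := \mathcal{H}_1^\perp$, which also reduces $T$. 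If $N \subseteq \mathcal{H}_2$ were a nonzero closed subspace that reduces $T|_{\mathcal{H}_2}$ (equivalently, reduces $T$) and on which $T$ acts as an isometry, then $\mathcal{H}_1 \oplus N$ would lie in $\mathcal{F}$ and strictly contain $\mathcal{H}_1$, contradicting maximality. Hence $T|_{\mathcal{H}_2}$ is c.n.i.

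The main delicate point in the argument is the verification in the first step that $\mathcal{F}$ is closed under closed linear sums, which is what makes the maximum $\mathcal{H}_1$ exist as an element of $\mathcal{F}$. Once that is in hand, the Wold decomposition applied to $T|_{\mathcal{H}_1}$ and a straightforward maximality argument on $\mathcal{H}_1^\perp$ finish the proof.
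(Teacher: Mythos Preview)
Your proof is correct. Note, however, that the paper does not itself prove Theorem~\ref{Levan}: it is quoted from \cite{Levan} as background. The paper does prove the $\mathbb{A}_r$-analogue (Theorem~\ref{Levan type decomposition}), and your argument is essentially the same strategy used there: take $\mathcal{H}_1$ to be the closed span of all closed reducing subspaces on which $T$ acts isometrically, verify this span is again such a subspace, invoke the c.n.u.\ hypothesis to kill any unitary part, and use maximality to force the complement to be c.n.i. Your explicit computation $\langle Tm,Tn\rangle=\langle T^*Tm,n\rangle=\langle m,n\rangle$ for $m\in M$, $n\in N$ is exactly the point that makes the span argument go through, and corresponds to the continuity step in the paper's proof of Theorem~\ref{Levan type decomposition}.
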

We mention here that a \textit{completely non-isometry} or simply a \textit{c.n.i.} contraction is a contraction that does not have an isometry part like a c.n.u. contraction does not have a unitary part. There are various analogues of Wold decomposition for more than one isometry in the literature and one of them is due to S\l oci\'{n}ski for a pair of doubly commuting isometries. 
	\begin{thm}[S\l oci\'{n}ski, \cite{Slocinski}]
		Let $(V_1, V_2)$ be a pair of doubly commuting isometries on a Hilbert space $\mathcal{H}$. Then there is a unique decomposition 
		$
			\mathcal{H}=\mathcal{H}_{uu} \oplus \mathcal{H}_{us} \oplus \mathcal{H}_{su} \oplus  \mathcal{H}_{ss}$ ,	where the spaces $\mathcal{H}_{uu}, \mathcal{H}_{us},\mathcal{H}_{su},\mathcal{H}_{ss}$ are closed linear subspaces of $\mathcal{H}$ reducing $V_1$ and $V_2$
		such that 
		\begin{enumerate}
			\item[(a)] $V_1|_{\mathcal{H}_{uu}}$ and $V_2|_{\mathcal{H}_{uu}}$ are unitaries; 
			\item[(b)] $V_1|_{\mathcal{H}_{us}}$ is a unitary and $V_2|_{\mathcal{H}_{us}}$ is a shift;
			\item[(c)] $V_1|_{\mathcal{H}_{su}}$ is a shift and $V_2|_{\mathcal{H}_{su}}$ is a unitary; 
			\item[(d)] $V_1|_{\mathcal{H}_{ss}}$ and $V_2|_{\mathcal{H}_{ss}}$ are shifts.
		\end{enumerate}
One or more of $\mathcal{H}_{uu}, \mathcal{H}_{us}, \mathcal{H}_{su},\mathcal{H}_{ss}$ may be equal to the trivial space $\{0\}$.
	\end{thm}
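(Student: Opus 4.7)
The plan is to obtain the decomposition by applying the von Neumann--Wold theorem to one isometry at a time, using the double commutation hypothesis to ensure that each step produces subspaces reducing both isometries.

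First I would apply the von Neumann--Wold decomposition to $V_1$, writing $\mathcal{H} = \mathcal{H}_u^{(1)} \oplus \mathcal{H}_s^{(1)}$, where $\mathcal{H}_u^{(1)} = \bigcap_{n\ge 0} V_1^n \mathcal{H}$ is the maximal reducing subspace on which $V_1$ acts as a unitary and $\mathcal{H}_s^{(1)}$ is its orthogonal complement, on which $V_1$ is a pure isometry. The crucial step is to verify that these two subspaces also reduce $V_2$. Since $V_1 V_2 = V_2 V_1$, each $V_1^n \mathcal{H}$ is $V_2$-invariant, hence $\mathcal{H}_u^{(1)}$ is $V_2$-invariant. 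For $V_2^*$-invariance of $\mathcal{H}_u^{(1)}$, it is equivalent to show $V_2$-invariance of $\mathcal{H}_s^{(1)} = \bigoplus_{n\ge 0} V_1^n (\ker V_1^*)$. Using the double commutation relation $V_1^* V_2 = V_2 V_1^*$, one checks that $\ker V_1^*$ is $V_2$-invariant; then each $V_1^n(\ker V_1^*)$ is $V_2$-invariant (again by $V_1 V_2 = V_2 V_1$), and passing to the closed orthogonal sum yields the claim.

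Next, the restrictions $V_2|_{\mathcal{H}_u^{(1)}}$ and $V_2|_{\mathcal{H}_s^{(1)}}$ are isometries (and again doubly commute with the corresponding $V_1$-restrictions), so I apply von Neumann--Wold to each of them separately to obtain
\begin{align*}
\mathcal{H}_u^{(1)} &= \mathcal{H}_{uu} \oplus \mathcal{H}_{us}, \\
\mathcal{H}_s^{(1)} &= \mathcal{H}_{su} \oplus \mathcal{H}_{ss},
\end{align*}
where $V_2$ acts unitarily on $\mathcal{H}_{uu}, \mathcal{H}_{su}$ and as a shift on $\mathcal{H}_{us}, \mathcal{H}_{ss}$. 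Interchanging the roles of $V_1$ and $V_2$ in the argument of the previous paragraph shows that each of these four subspaces also reduces $V_1$, because they are built via $V_2$-operations from spaces that already reduce $V_1$. The behaviour of $V_1$ on each piece is then inherited directly from the first splitting: it is unitary on $\mathcal{H}_{uu} \oplus \mathcal{H}_{us}$ and a shift on $\mathcal{H}_{su} \oplus \mathcal{H}_{ss}$, giving the desired four cases (a)--(d).

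For uniqueness, suppose $\mathcal{H} = \mathcal{H}_{uu}' \oplus \mathcal{H}_{us}' \oplus \mathcal{H}_{su}' \oplus \mathcal{H}_{ss}'$ is another decomposition with the stated properties. Then $\mathcal{H}_{uu}' \oplus \mathcal{H}_{us}'$ is a reducing subspace of $V_1$ on which $V_1$ is unitary, while $\mathcal{H}_{su}' \oplus \mathcal{H}_{ss}'$ is reducing and $V_1$ is a pure shift there; the uniqueness clause of the Wold decomposition for $V_1$ forces these sums to coincide with $\mathcal{H}_u^{(1)}$ and $\mathcal{H}_s^{(1)}$ respectively. Applying the same argument to $V_2$ on each of these two spaces then identifies the four primed pieces with the unprimed ones. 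The main obstacle is the first reduction step: without the double commutation hypothesis, $\ker V_1^*$ need not be $V_2$-invariant, and the Wold subspaces of $V_1$ would fail to reduce $V_2$, so the hypothesis $V_1^* V_2 = V_2 V_1^*$ must be used precisely at this point and nowhere can it be avoided.
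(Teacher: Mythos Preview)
The paper does not supply its own proof of S\l oci\'{n}ski's theorem: it is quoted in the Introduction as a known motivational result, with a citation to \cite{Slocinski}. There is therefore no in-paper proof to compare against directly.

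That said, your argument is correct and is exactly the template the paper itself follows when proving its analogous results for $\mathbb{A}_r$-contractions (Theorem~\ref{thm602} and Theorem~\ref{prop603}): decompose with respect to the first operator, invoke double commutativity to see that the pieces reduce the second operator (the role played in the paper by Lemma~\ref{reducing}), then decompose each piece with respect to the second operator and argue symmetrically. Your uniqueness argument via the uniqueness clause of the single-operator Wold decomposition is likewise the same mechanism the paper uses. One minor point worth making explicit in your write-up: when you say ``$V_1$ is a shift on $\mathcal{H}_{su}\oplus\mathcal{H}_{ss}$, giving the desired four cases,'' you are implicitly using that the restriction of a pure isometry to a reducing subspace is again pure (any unitary part on a sub-reducing space would contradict purity on the larger space), and similarly that the restriction of a unitary to a reducing subspace is unitary. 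This is routine, but since it is the step that actually yields (b), (c), (d) individually rather than just in aggregate, it deserves one sentence.
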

\noindent S\l oci\'{n}ski's decomposition was comprehensive in the sense that no further decomposition of the original space was required. Popovici \cite{Popovici} provided an analogue of S\l oci\'{n}ski's result for a pair of commuting isometries, see Theorems 2.8 \& 3.10 in \cite{Popovici}. S\l oci\'{n}ski's work was further generalized in \cite{BurdakII} for any number of doubly commuting isometries with finite dimensional wandering subspaces. Also, there are various interesting orthogonal decompositions for commuting contractions under certain conditions, e.g. see \cite{Albrecht, BCL, mix, BurdakI, BurdakIII,  Cate, Kubrusly, Popovici, Szy}. Eschmeier's work \cite{Esch} on finitely many commuting contractions is a significant contribution to this area. Recently, the first named author of this article developed a new algorithm in \cite{Pal} and found canonical decomposition for any family of doubly commuting contractions. All these interesting works naturally motivate us to study joint reducing subspaces and orthogonal decompositions for operators associated with an annulus.

\subsection{A brief outline of the main results} In Sections \ref{Annulus unitary} \& \ref{annulus isometry} of this article, we study in detail the $\A_r^n$-unitaries and $\A_r^n$-isometries; find characterizations for them and decipher their structures. In particular in Theorem \ref{thm416} we prove that a commuting tuple $(V_1, \dots , V_n)$ is an $\A_r^n$-isometry if and only if each $V_i$ is an $\A_r$-isometry. We show in Section \ref{annulus isometry} that an $\A_r$-isometry satisfies the Brehmer's positivity condition. In Theorem \ref{Wold decomposition}, we show that every $\A_r$-isometry possesses a Wold decomposition that splits it into two orthogonal parts of which one is an $\A_r$-unitary and the other is a pure $\A_r$-isometry. Then in Theorem \ref{canonical decomposition}, we show that every $\A_r$-contraction orthogonally decomposes into an $\A_r$-unitary and a c.n.u. $\A_r$-contraction. A c.n.u. $\A_r$-contraction is an $\A_r$-contraction that does not have any $\A_r$-unitary part. This is parallel to the {canonical decomposition} of a contraction as in Theorem \ref{thm104}. Moving one step ahead, we present a Levan type decomposition for a c.n.u. $\A_r$-contraction in Theorem \ref{Levan type decomposition}. In Section \ref{doubly commuting}, we extend the decomposition results to finitely many doubly commuting $\A_r$-contractions. We also show in Theorem \ref{thm702} that an analogue of canonical decomposition is possible for a commuting tuple of $\mathbb{A}_r$-contractions which is motivated by a similar work due to Burdak \cite{BurdakIII} on commuting contractions. 

\smallskip

In Section \ref{conclusion}, we generalize all these decomposition results to any infinite family of doubly commuting $\A_r$-contractions. To do so, we follow the combinatorial techniques described in \cite{Pal} by the first named author of this paper. Indeed, in Theorem \ref{general case}, we describe a canonical decomposition for any infinite family of doubly commuting $\A_r$-contractions $\mathcal{T}$ acting on a Hilbert space $\HS$ and show that $\HS$ admits an orthogonal decomposition into joint reducing subspaces of $\mathcal{T}$ restricted to which all members of $\mathcal{T}$ are either $\A_r$-unitaries or c.n.u. $\A_r$-contractions. Naturally, as a special case we obtain in Theorem \ref{Wold_infinite} a Wold type decomposition for any number of doubly commuting $\A_r$-isometries. Also, in Theorem \ref{thm805} we determine a Levan type orthogonal decomposition for any infinite family of doubly commuting c.n.u. $\A_r$-contractions which is an analogue of Theorem \ref{Levan}.
Several relevant examples have been discussed throughout the paper.

\section{The Polyannulus unitaries}\label{Annulus unitary}
	
	\vspace{0.2cm}
	
	\noindent Recall that a unitary is a normal operator having its spectrum on the boundary of the unit disk $\D$. In the same spirit, a unitary with respect to the annulus $\mathbb A_r$ is a normal operator with its spectrum lying on the boundary of $\mathbb A_r$. Also, a commuting tuple of $n$ unitaries is a commuting normal $n$-tuple having its Taylor joint spectrum on the $n$-torus $\mathbb T^n$, which is the distinguished boundary of the polydisc $\mathbb D^n$. To go parallel with this, an $\A_r^n$-unitary is naturally defined to be a commuting $n$-tuple of normal operators with its Taylor joint spectrum being a subset of the distinguished boundary $b\A_r^n$ of the polyannulus $\A_r^n$. It is merely mentioned that for any $n \geq 1, b{\mathbb{A}_r^n}=(\partial \mathbb{A}_r)^n$. For the sake of simplicity we shall use the term \textit{$\A_r$-unitary} for an $\A_r^1$-unitary. In this Section, we characterize an $\A_r^n$-unitary and then find a structure theorem for it. We start by recalling the following useful results from the literature \cite{N-S1, N-S2}. 
	
\begin{thm}[\cite{N-S1}, Theorem 3.1]\label{thm302}
	Let $T$ be an invertible operator on a Hilbert space $\mathcal{H}$. Then the following are equivalent: 
	\begin{enumerate}
		\item  $T$ is an $\A_r$-unitary;
		\item $rT^{-1}$ is an $\A_r$-unitary;
		\item there is a unique orthogonal decomposition $\mathcal{H}=\mathcal{H}_u\oplus \mathcal{H}_r$ such that $\mathcal{H}_u$ and $\mathcal{H}_r$ reduce $T$, $T|_{\mathcal{H}_u}$ and $rT^{-1}|_{\mathcal{H}_r}$ are unitaries.  Indeed, we have that
		\begin{equation}\label{eqn301}
			\begin{split}
				\mathcal{H}_u 
				=\overset{\infty}{\underset{n=0}{\bigcap}}Ker(I-T^{*n}T^n)
				\quad \text{and} \quad 
				\mathcal{H}_r 
				=\overset{\infty}{\underset{n=0}{\bigcap}}Ker\left(I-(rT^{-1})^{*n}(rT^{-1})^n\right).
			\end{split}
		\end{equation}
		The spaces $\mathcal{H}_u$ or $\mathcal{H}_r$ may be absent, that is, equal to $\{0\}$.
	\end{enumerate}	
\end{thm}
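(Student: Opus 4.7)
The plan is to base the entire argument on the spectral theorem for normal operators. Since an $\A_r$-unitary is by definition a normal operator whose spectrum lies in the distinguished boundary $b\A_r=\T\cup r\T$, and since these two circles are disjoint (as $r<1$), the spectral measure naturally splits $\HS$ into two reducing pieces, which will be the desired $\HS_u$ and $\HS_r$.

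First I would prove (1) $\Leftrightarrow$ (2) by functional calculus. Invertibility of $T$ is automatic as $0\notin b\A_r$, so $T^{-1}$ lies in the commutative $C^*$-algebra generated by $T$, is therefore normal, and by the spectral mapping theorem $\sigma(rT^{-1})\subseteq r\T\cup\T=b\A_r$. The converse is symmetric, using $r(rT^{-1})^{-1}=T$.

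For (1) $\Rightarrow$ (3), write $T=\int_{\sigma(T)}z\,dE(z)$. Since $\T$ and $r\T$ are disjoint clopen subsets of $\sigma(T)$, $P_u:=E(\T)$ and $P_r:=E(r\T)$ are complementary orthogonal projections that commute with $T$ and $T^*$. Set $\HS_u:=P_u\HS$ and $\HS_r:=P_r\HS$. Then $\HS=\HS_u\oplus\HS_r$ reduces $T$; on $\HS_u$ the operator $T$ is normal with spectrum in $\T$, hence unitary; on $\HS_r$ it is normal with spectrum in $r\T$, so $rT^{-1}|_{\HS_r}$ is normal with spectrum in $\T$, hence unitary. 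To obtain the explicit formulas (\ref{eqn301}), observe that for any $h\in\HS$, with $\mu_h:=\langle E(\cdot)h,h\rangle$,
\[
\langle(I-T^{*n}T^n)h,h\rangle=\int_{\T\cup r\T}\bigl(1-|z|^{2n}\bigr)\,d\mu_h(z).
\]
Since $r<1$, the integrand is non-negative on $b\A_r$ and vanishes precisely on $\T$, so the integral is zero for every $n\geq 1$ if and only if $\mu_h$ is supported on $\T$, i.e., $h\in\HS_u$. The identical argument applied to $rT^{-1}$, whose spectrum also lies in $b\A_r$ by (2), yields the formula for $\HS_r$.

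The implication (3) $\Rightarrow$ (1) is straightforward: on $\HS_u$, $T$ is already unitary, while on $\HS_r$ we have $T=r(rT^{-1}|_{\HS_r})^{-1}$, namely $r$ times a unitary, so the direct sum is normal with spectrum in $b\A_r$. Uniqueness is forced by the spectral theorem, since any decomposition of the prescribed form must satisfy $\HS_u=E(\T)\HS$ and $\HS_r=E(r\T)\HS$. I expect the main technical point to be the kernel identification in (\ref{eqn301}); the crucial ingredient is the positivity $1-|z|^{2n}\geq 0$ on $\T\cup r\T$ (which uses $r<1$), ensuring that a vanishing integral pins the spectral measure to the zero set of the integrand.
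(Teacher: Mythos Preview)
Your argument via the spectral theorem is correct and is the natural route. Note, however, that the paper does not actually supply a proof of this theorem: it is quoted verbatim as Theorem~3.1 from the reference \cite{N-S1}, so there is no in-paper proof to compare against. Your approach is self-contained and would serve as a complete substitute.

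One small point worth making explicit in your write-up: when you pass from $\langle(I-T^{*n}T^n)h,h\rangle=0$ to $h\in\mathrm{Ker}(I-T^{*n}T^n)$, you are implicitly using that $I-T^{*n}T^n\geq 0$. This does follow from your observation that the integrand $1-|z|^{2n}$ is non-negative on $\T\cup r\T$, but it is the key step linking the scalar integral to the operator kernel, so it deserves a sentence. In fact the $n=1$ kernel already equals $\HS_u$ (since $1-|z|^2$ vanishes exactly on $\T$), so the intersection over all $n$ is redundant here---though of course harmless and consistent with the stated formula.
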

	
	\begin{prop}[\cite{N-S2}, Lemma 3.4]\label{Polyannulus-each annulus}

A commuting tuple $\underline{T}=(T_1, \dotsc, T_n)$ of operators acting on a Hilbert space $\HS$ is an $\mathbb{A}_r^n$-unitary if and only if $T_1, \dotsc, T_n$ are $\mathbb{A}_r$-unitaries on $\HS$.
	\end{prop}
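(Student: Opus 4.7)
The plan is to reduce both directions to two elementary facts: (i) the distinguished boundary of $\CA_r^n$ factorises as a product, namely $b\CA_r^n=(\partial\A_r)^n$ (which is already noted in the paper), and (ii) the projection property of the Taylor joint spectrum for a commuting tuple, which says that if $\pi_i\colon\C^n\to\C$ denotes the $i$-th coordinate projection, then $\pi_i(\sigma_T(T_1,\dots,T_n))=\sigma(T_i)$ for each $i$. With these in hand, the proposition becomes essentially a set-theoretic statement about when a subset of $\C^n$ lies in a product of boundary circles.

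For the forward direction, assume $\UT=(T_1,\dots,T_n)$ is an $\A_r^n$-unitary. Normality of each $T_i$ is immediate from the definition. To see that $\sigma(T_i)\subseteq\partial\A_r$, I would apply the projection property: since $\sigma_T(\UT)\subseteq(\partial\A_r)^n$, we obtain
\[
\sigma(T_i)=\pi_i\big(\sigma_T(\UT)\big)\subseteq\pi_i\big((\partial\A_r)^n\big)=\partial\A_r.
\]
Combined with normality, this exhibits each $T_i$ as an $\A_r$-unitary.

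For the reverse direction, assume each $T_i$ is an $\A_r$-unitary, i.e.\ normal with $\sigma(T_i)\subseteq\partial\A_r$. Since the $T_i$ commute and are normal, the tuple is a \emph{commuting normal $n$-tuple}, so Fuglede's theorem gives double commutativity and hence the $T_i$'s generate a commutative $C^*$-algebra $\mathcal{A}$. For such a tuple, the Taylor joint spectrum coincides with the Gelfand (maximal ideal) spectrum of $\mathcal{A}$ realised via the joint spectral measure, and in particular the projection property combined with a standard containment yields
\[
\sigma_T(T_1,\dots,T_n)\subseteq\sigma(T_1)\times\cdots\times\sigma(T_n)\subseteq(\partial\A_r)^n=b\CA_r^n.
\]
Normality of each component is already assumed, so the tuple qualifies as an $\A_r^n$-unitary.

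The only mildly technical point is the containment $\sigma_T(\UT)\subseteq\sigma(T_1)\times\cdots\times\sigma(T_n)$ for a commuting normal tuple, but this is a well-known consequence of the projection property together with the spectral theorem for commuting normal operators; it can simply be cited rather than reproved. The rest is a direct unpacking of the definition of $\A_r^n$-unitary via the distinguished boundary identity $b\CA_r^n=(\partial\A_r)^n$, so I do not expect any serious obstacle.
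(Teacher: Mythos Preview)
Your argument is correct. Note, however, that the paper does not supply its own proof of this proposition: it is quoted from \cite{N-S2} (Lemma 3.4) and stated without proof, so there is no in-paper argument to compare against. Your route via the projection property of the Taylor spectrum is the standard one and is exactly what one would expect the cited proof to contain. One small simplification: the containment $\sigma_T(\UT)\subseteq\sigma(T_1)\times\cdots\times\sigma(T_n)$ already follows for \emph{any} commuting tuple directly from the projection property (if $\lambda\in\sigma_T(\UT)$ then $\lambda_i=\pi_i(\lambda)\in\sigma(T_i)$), so the appeal to Fuglede and the $C^*$-algebra generated by the $T_i$ is not strictly needed for the spectrum containment, only for confirming that the tuple is a commuting normal tuple---which is already part of the hypothesis in the reverse direction.
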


		\begin{rem}\label{rem303}
		Note that the subspaces $\mathcal{H}_u$ and $\mathcal{H}_r$ as in (\ref{eqn301}) are reducing subspaces for any operator $T'$ that doubly commutes with $T$. Indeed, we have 
		\[
		T'(I-T^{*n}T^n)=(I-T^{*n}T^n)T' \quad \text{for all} \ n \in \N \cup \{0\}.
		\]
		Consequently, $Ker(I-T^{*n}T^n)$ is a closed subspace reducing $T'$ for each $n$ and thus, $\mathcal{H}_u$ also reduces $T'$. For a detailed proof to this, a reader can be referred to Theorem 4.2 in \cite{Pal}.  
	\end{rem}
	  For a normal operator $T$ to be a unitary, it is necessary and sufficient that $I-T^*T=0$. An analogous result holds for an $\mathbb{A}_r$-unitary as shown below.
\begin{prop}\label{prop304}
A normal operator $T$ acting on a Hilbert space $\mathcal{H}$ is an $\mathbb{A}_r$-unitary if and only if 
	\[
	(I-T^*T)(T^*T-r^2I)=0.
	\]
\end{prop}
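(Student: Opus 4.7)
The plan is to use the spectral theorem for normal operators and reduce the operator identity to a statement about the support of the spectral measure.

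First, I would recall that an $\mathbb{A}_r$-unitary is, by definition, a normal operator $T$ whose spectrum $\sigma(T)$ lies in the distinguished boundary of $\overline{\mathbb{A}}_r$, which for $n=1$ is simply the topological boundary $\partial \mathbb{A}_r=\{z\in\C:|z|=1\}\cup\{z\in\C:|z|=r\}$. I would then invoke the spectral theorem to write $T=\int_{\sigma(T)} z\, dE(z)$ for a projection-valued measure $E$ on $\sigma(T)$, so that by the functional calculus
\[
(I-T^*T)(T^*T-r^2I)=\int_{\sigma(T)} (1-|z|^2)(|z|^2-r^2)\, dE(z).
\]
Denote the polynomial $p(z)=(1-|z|^2)(|z|^2-r^2)$, which has zero set exactly $\{|z|=1\}\cup\{|z|=r\}=\partial\mathbb{A}_r$.

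For the forward implication, if $T$ is an $\mathbb{A}_r$-unitary, then $\sigma(T)\subseteq \partial\mathbb{A}_r$, so $p$ vanishes on the support of $E$, whence the integral above equals $0$ and the desired identity holds.

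For the reverse implication, assume $T$ is normal with $(I-T^*T)(T^*T-r^2I)=0$. Then the spectral integral above vanishes, which forces $p\equiv 0$ $E$-almost everywhere; equivalently, the support of $E$, namely $\sigma(T)$, is contained in the zero set of $p$, i.e.\ $\sigma(T)\subseteq\{|z|=1\}\cup\{|z|=r\}=\partial\mathbb{A}_r$. Combined with normality, this gives that $T$ is an $\mathbb{A}_r$-unitary.

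I do not anticipate any real obstacle: the proof is a direct application of the continuous functional calculus. The only minor care needed is to justify that vanishing of the spectral integral of a continuous function forces that function to vanish on $\sigma(T)$, which follows from the standard correspondence between $\sigma(T)$ and the support of the spectral measure for a normal operator.
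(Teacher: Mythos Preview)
Your proof is correct. The reverse implication is essentially identical to the paper's: both introduce $p(z)=(1-|z|^2)(|z|^2-r^2)$ and use spectral reasoning (the paper phrases it via the spectral mapping theorem, you via the support of the spectral measure, but these are the same argument).

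The forward implication is where you and the paper diverge. You argue directly from the functional calculus: since $\sigma(T)\subseteq\partial\mathbb{A}_r$, the function $p$ vanishes on $\sigma(T)$ and hence $p(T)=0$. The paper instead invokes its structural decomposition of an $\mathbb{A}_r$-unitary (their Theorem~\ref{thm302}), writing $T=T_1\oplus rT_2$ with $T_1,T_2$ unitary on $\mathcal{H}_u\oplus\mathcal{H}_r$, and then computes $(I-T^*T)(T^*T-r^2I)$ explicitly as a product of $2\times 2$ block matrices. Your route is shorter and self-contained, needing only the continuous functional calculus; the paper's route, while more laborious here, has the advantage of exhibiting concretely how the two factors $I-T^*T$ and $T^*T-r^2I$ kill complementary summands, which foreshadows later computations in the paper (e.g.\ Lemma~\ref{lem418}).
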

\begin{proof}
	Let $T$ be an $\mathbb{A}_r$-unitary on $\mathcal{H}$. It follows from Theorem	\ref{thm302} that  $\mathcal{H}$ decomposes into an orthogonal sum $\mathcal{H}=\mathcal{H}_u\oplus \mathcal{H}_r$ such that $\mathcal{H}_u$ and $\mathcal{H}_r$ reduce $T$ and $T|_{\mathcal{H}_u},\, rT^{-1}|_{\mathcal{H}_r}$ are unitaries. With respect to the decomposition $\mathcal{H}=\mathcal{H}_u \oplus \mathcal{H}_r,$ the block matrix of $T$ is given by
	\[
T=	\begin{bmatrix}
		T_1 & 0\\
		0 & rT_2\\
	\end{bmatrix} ,
	\quad \mbox{where} \quad T_1=T|_{\mathcal{H}_u} \quad \text{and} \quad  T_2=r^{-1}T|_{\mathcal{H}_r}.
	\]
	Thus, $T_1$ and $T_2$ are unitaries on $\mathcal{H}_u$ and $\mathcal{H}_r$ respectively. With respect to the decomposition $\mathcal{H}=\mathcal{H}_u \oplus \mathcal{H}_r$, we have
	\begin{equation*}
		\begin{split} 
			(I-T^*T)(T^*T-r^2I)
			&=
			\begin{bmatrix}
				I-T_1^*T_1 & 0\\
				0 & I-r^2T_2^*T_2\\
			\end{bmatrix}
			\begin{bmatrix}
				T_1^*T_1-r^2I & 0\\
				0 & r^2T_2^*T_2-r^2I\\
			\end{bmatrix}\\
			&=
			\begin{bmatrix}
				0 & 0\\
				0 & (1-r^2)I\\
			\end{bmatrix}
			\begin{bmatrix}
				(1-r^2)I & 0\\
				0 & 0\\
			\end{bmatrix}
			=
			\begin{bmatrix}
				0 & 0\\
				0 & 0\\
			\end{bmatrix}.
		\end{split}
	\end{equation*}
	For the converse, let $T$ be a normal operator such that $(I-T^*T)(T^*T-r^2I)=0$ and let $p(z)=(1-|z|^2)(|z|^2-r^2)$. It follows from the spectral mapping theorem that for any $\lambda \in \sigma(T)$, 
	\[
	p(\lambda) \in \sigma(p(T))=\sigma\left((I-T^*T)(T^*T-r^2I)\right)=\{0\}.
	\]
 Consequently, $\sigma(T)$ is contained in the zero set of $p(z)$ which is a subset of $\mathbb{T} \cup r\mathbb{T}$. Thus, $T$ is an $\mathbb{A}_r$-unitary.
\end{proof}

Proposition \ref{Polyannulus-each annulus} tells us that an $\A_r^n$-unitary is an $n$-tuple consisting of commuting $\mathbb{A}_r$-unitaries. Thus, we have the following natural characterization for an $\mathbb A_r^n$-unitary as a consequence of Proposition \ref{prop304}.

\begin{thm}
A commuting tuple of normal operators $(T_1, \dots , T_n)$ is an $\mathbb A_r^n$-unitary if and only if $(I-T_i^*T_i)(T_i^*T_i-r^2I)=0$ for each $i=1, \dots , n$.
\end{thm}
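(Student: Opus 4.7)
The proof will be essentially a direct assembly from two results already available in the excerpt, namely Proposition \ref{Polyannulus-each annulus} (an $\mathbb{A}_r^n$-unitary is nothing but an $n$-tuple of commuting $\mathbb{A}_r$-unitaries) and Proposition \ref{prop304} (the single-variable criterion $(I-T^*T)(T^*T-r^2I)=0$ for a normal operator to be an $\mathbb{A}_r$-unitary). So the task is purely to glue these two statements together and verify that the hypothesis of normality passes correctly through each reduction.

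For the forward direction, I assume $(T_1,\dots,T_n)$ is an $\mathbb{A}_r^n$-unitary. By Proposition \ref{Polyannulus-each annulus}, each $T_i$ is then an $\mathbb{A}_r$-unitary on $\mathcal{H}$; in particular each $T_i$ is normal, so Proposition \ref{prop304} applies and yields $(I-T_i^*T_i)(T_i^*T_i-r^2I)=0$ for every $i$.

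For the converse, I start from a commuting tuple of normal operators $(T_1,\dots,T_n)$ satisfying $(I-T_i^*T_i)(T_i^*T_i-r^2I)=0$ for each $i$. Since each $T_i$ is normal, Proposition \ref{prop304} immediately promotes the identity $(I-T_i^*T_i)(T_i^*T_i-r^2I)=0$ to the statement that $T_i$ is an $\mathbb{A}_r$-unitary. Thus $(T_1,\dots,T_n)$ is a commuting tuple all of whose components are $\mathbb{A}_r$-unitaries, and a second application of Proposition \ref{Polyannulus-each annulus} completes the proof by concluding that $(T_1,\dots,T_n)$ is an $\mathbb{A}_r^n$-unitary.

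There is no real obstacle here; the only thing to be careful about is that Proposition \ref{Polyannulus-each annulus} requires a priori commutativity of the tuple, which is part of our hypothesis in both directions, and that Proposition \ref{prop304} requires normality of the component, which is also given in both directions. The statement is therefore a clean corollary, and I would simply write it in two short paragraphs mirroring the two implications.
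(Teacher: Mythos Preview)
Your proposal is correct and matches the paper's own reasoning exactly: the theorem is stated in the paper as an immediate consequence of Proposition~\ref{Polyannulus-each annulus} and Proposition~\ref{prop304}, with no additional argument given. Your two-paragraph assembly of the forward and converse implications is precisely what the paper intends.
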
	
	\noindent Next, we extend the decomposition result in Theorem \ref{thm302} to the polyannulus case. For an $\mathbb{A}_r^n$-unitary $(T_1, \dots , T_n)$, we shall decompose each $T_i$ at a time and show that at every stage the subspaces are reducing for all $T_1, \dots T_n$. We shall use the following terminologies here. 
	
	\begin{defn} An invertible operator $T$ acting on a Hilbert space $\HS$ is said to be 
		\begin{enumerate}
			\item an \textit{$r$-times unitary} or simply \textit{type} $r$ if there is a unitary $U$ on $\HS$ such that $T=rU$;
			\item a type $u$ if $T$ is a unitary.
		\end{enumerate}
	\end{defn}
	\begin{thm}\label{thm305}
		Let $\underline{T}=(T_1, \dotsc, T_n)$ be a tuple of commuting $\mathbb{A}_r$-unitaries acting on a Hilbert space $\mathcal{H}$. Then there exists a decomposition of $\HS$ into an orthogonal sum of $2^n$ subspaces $\HS_1, \dotsc, \HS_{2^n}$ of $\HS$ such that
		\begin{enumerate}
			\item every $\HS_j
			, 1 \leq j \leq 2^n$, is a joint reducing subspace for $T_1, \dotsc, T_n$ ;
			\item $T_i|_{\HS_j}$ is either a unitary or a $r$-times unitary for $ 1 \leq i \leq n$ and $1 \leq j \leq 2^n$;
			\item if $\Psi_n$ is the collection of all functions $\psi:\{1, \dotsc, n\} \to \{u, r\}$, then corresponding to every $\psi \in \Psi_n$, there is a unique subspace $\HS_\ell (1 \leq \ell \leq 2^n)$ such that the $j$-th component of $(T_1|_{\HS_{\ell}}, \dotsc, T_n|_{\HS_{\ell}})$ is of the type $\psi(j)$ for $1 \leq j \leq n$;
			\item $\HS_1$ is the maximal joint reducing subspace for $T_1, \dotsc, T_n$ such that $T_1|_{\HS_1}, \dotsc, T_n|_{\HS_1}$ are unitaries;
		\item $\HS_{2^n}$ is the maximal joint reducing subspace for $T_1, \dotsc, T_n$ such that $T_1|_{\HS_{2^n}}, \dotsc, T_n|_{\HS_{2^n}}$ are $r$-times unitaries.
		\end{enumerate}
 The decomposition is uniquely determined. One or more members of $\{\HS_j : 1 \leq j \leq 2^n\}$ may be the trivial subspace $\{0\}$.
	\end{thm}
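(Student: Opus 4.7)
The plan is to prove the theorem by induction on $n$, at each step peeling off one operator at a time using Theorem \ref{thm302}, and to exploit the crucial fact that commuting normal operators are automatically doubly commuting (by the Fuglede-Putnam theorem). Since each $T_i$ is normal (being an $\mathbb{A}_r$-unitary), Fuglede's theorem guarantees that $T_i T_j^* = T_j^* T_i$ for all $i \neq j$, so the doubly commuting hypothesis needed to invoke Remark \ref{rem303} is for free.

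The base case $n=1$ is exactly Theorem \ref{thm302}. For the induction step, suppose the result holds for $n-1$. Apply Theorem \ref{thm302} to $T_n$ to obtain the orthogonal decomposition $\mathcal{H} = \mathcal{H}_u^{(n)} \oplus \mathcal{H}_r^{(n)}$ where $T_n|_{\mathcal{H}_u^{(n)}}$ is unitary and $T_n|_{\mathcal{H}_r^{(n)}}$ is $r$ times a unitary. By Fuglede's theorem together with Remark \ref{rem303}, both subspaces reduce $T_1, \dotsc, T_{n-1}$. On each of the two reducing subspaces, the restrictions of $T_1, \dotsc, T_{n-1}$ remain commuting normal operators whose spectra (being subsets of the parent spectra, by spectral permanence for a reducing subspace) lie in $\partial\mathbb{A}_r$, so they are again commuting $\mathbb{A}_r$-unitaries. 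The induction hypothesis applied separately on $\mathcal{H}_u^{(n)}$ and $\mathcal{H}_r^{(n)}$ yields $2\cdot 2^{n-1} = 2^n$ pieces. Reindexing the resulting subspaces by the functions $\psi\in\Psi_n$, with $\psi(n)=u$ on pieces inside $\mathcal{H}_u^{(n)}$ and $\psi(n)=r$ on pieces inside $\mathcal{H}_r^{(n)}$, gives assertions (1), (2) and (3).

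Uniqueness follows by induction from the uniqueness clause of Theorem \ref{thm302}: at every stage the split of the ambient space determined by $T_n$ is forced, and the induced decompositions on each half are then uniquely determined by the inductive hypothesis. For the maximality statements (4) and (5), observe from (\ref{eqn301}) that
\[
\mathcal{H}_1 \;=\; \bigcap_{i=1}^{n}\bigcap_{k=0}^{\infty} \operatorname{Ker}(I - T_i^{*k}T_i^{k}),
\qquad
\mathcal{H}_{2^n} \;=\; \bigcap_{i=1}^{n}\bigcap_{k=0}^{\infty} \operatorname{Ker}\!\left(I-(rT_i^{-1})^{*k}(rT_i^{-1})^{k}\right),
\]
which exhibit them intrinsically as the largest joint reducing subspaces on which every $T_i$ is unitary (respectively $r$ times unitary).

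The main subtlety I anticipate is verifying that at each inductive stage the restricted operators are genuinely $\mathbb{A}_r$-unitaries on the smaller reducing subspace; this requires that spectrum behaves well under restriction to a reducing subspace (so that $\sigma(T_i|_{\mathcal{H}'}) \subseteq \sigma(T_i) \subseteq \partial\mathbb{A}_r$) and that normality is inherited, both standard but worth making explicit. Once this is in place, the whole argument is a clean recursion driven by Theorem \ref{thm302}, Remark \ref{rem303}, and Fuglede-Putnam.
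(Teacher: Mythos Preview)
Your proposal is correct and follows essentially the same inductive strategy as the paper: peel off one operator at a time via Theorem \ref{thm302}, use Remark \ref{rem303} to propagate reducibility to the remaining operators, and iterate. The only cosmetic differences are that the paper decomposes with respect to $T_1$ first and then adjoins $T_{k+1}$ in the inductive step (rather than splitting off $T_n$ first), and that you make the appeal to Fuglede's theorem explicit where the paper leaves it implicit in its invocation of Remark \ref{rem303}.
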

	\begin{proof} For an $\mathbb A_r^n$-unitary, we apply mathematical induction on $n$. The $n=1$ case follows from Theorem \ref{thm302}. We shall present here a proof when $n=2$ to make the algorithm transparent to the readers. Let $T_1, T_2$ be commuting $\mathbb{A}_r$-unitaries on $\mathcal{H}$. From Theorem \ref{thm302}, it follows that there exist orthogonal closed subspaces $\mathcal{H}_u$ and $\mathcal{H}_r$ in $\mathcal{H}$ 
		such that
		$
		\mathcal{H}=\mathcal{H}_u \oplus \mathcal{H}_r.
		$
		The spaces $\mathcal{H}_u, \mathcal{H}_r$ reduce $T_1$ such that $T_1|_{\mathcal{H}_u}$ and $rT^{-1}_1|_{\mathcal{H}_r}$ are unitaries. It follows from Remark \ref{rem303} that $\mathcal{H}_u$ and $\mathcal{H}_r$ reduce $T_2$ as well. Since $T_2|_{\mathcal{H}_u}$ is also an $\mathbb{A}_r$-unitary, Theorem \ref{thm302} gives the decomposition $\mathcal{H}_u=\mathcal{H}_{uu} \oplus \mathcal{H}_{ur}$,
		where $\mathcal{H}_{uu}, \mathcal{H}_{ur}$ are closed subspaces of $\mathcal{H}_u$ reducing $T_2$ such that $T_2|_{\mathcal{H}_{uu}}, rT^{-1}_2|_{\mathcal{H}_{ur}}$ are unitaries. Again, Remark \ref{rem303} yields that  $\mathcal{H}_{uu}$ and $\mathcal{H}_{ur}$ reduce each $T_j|_{\mathcal{H}_u}$ and hence, each $T_j$. Analogously, for the $\mathbb{A}_r$-unitary $T_2|_{\mathcal{H}_r}$, we have the decomposition 		
			$\mathcal{H}_r=\mathcal{H}_{ru} \oplus \mathcal{H}_{rr}$,
		where, $\mathcal{H}_{ru}, \mathcal{H}_{rr}$ are closed subspaces of $\mathcal{H}_r$ reducing $T_2$ such that $T_2|_{\mathcal{H}_{ru}}, rT^{-1}_2|_{\mathcal{H}_{rr}}$ are unitaries and $\mathcal{H}_{ru}, \mathcal{H}_{rr}$ reduce each $T_j$.
		Hence, we have the decomposition 
			$\mathcal{H}=\mathcal{H}_{uu} \oplus \mathcal{H}_{ur} \oplus \mathcal{H}_{ru} \oplus \mathcal{H}_{rr}$,
		where, $\mathcal{H}_{uu},\mathcal{H}_{ur}, \mathcal{H}_{ru}, \mathcal{H}_{rr}$ are closed subspaces of $\mathcal{H}$ reducing each $T_j$ such that 
		\begin{itemize}
			\item[(a)] $T_1|_{\mathcal{H}_{uu}}, \; T_2|_{\mathcal{H}_{uu}}$ are unitaries;
			\item[(b)] $T_1|_{\mathcal{H}_{ur}}$ is a unitary and $T_2|_{\mathcal{H}_{ur}}$ is an $r$-times unitary;
			\item[(c)] $T_1|_{\mathcal{H}_{ru}}$ is an $r$-times unitary and $T_2|_{\mathcal{H}_{ru}}$ is a unitary;
			\item[(d)] $T_1|_{\mathcal{H}_{rr}}, \; T_2|_{\mathcal{H}_{rr}}$ are $r$-times unitaries.
		\end{itemize}
		\noindent 
		Thus, the desired conclusion holds for $n=2$. Let the conclusion be true for $n=k$ for any $k \geq 2$. We prove that it is true for any commuting $(k+1)$-tuple $T_1, \dotsc, T_k, T_{k+1}$ of $\A_r$-unitaries on $\mathcal{H}$. Since $T_{k+1}$ is an $\mathbb{A}_r$-unitary that commutes with $T_1, \dotsc, T_k$,  a repeated application of the method as in Remark \ref{rem303} yields that  each subspace $\mathcal{H}_{j}$ reduces $T_{k+1}$. Consider the collection of operator tuples 
		\[
		\{(T_1|_{\mathcal{H}_j}, \dotsc, T_k|_{\mathcal{H}_j}, \; T_{k+1}|_{\mathcal{H}_j}) \ : \  1 \leq j \leq 2^k\}
		\]
		and let
		 $
	(T_1|_{\mathcal{H}_j}, \dotsc, T_k|_{\mathcal{H}_j}, \; T_{k+1}|_{\mathcal{H}_j})
		$
		be an arbitrary tuple from this collection. By induction hypothesis, each $T_i|_{\mathcal{H}_j} (1 \leq i \leq k)$ is either a unitary or an $r$-times unitary. We apply Theorem \ref{thm302} on  each $T_{k+1}|_{\mathcal{H}_j}$ and so, $\mathcal{H}_j$ can be written as orthogonal sum of two reducing subspaces of $T_{k+1}$, say, $\mathcal{H}_{ju}, \mathcal{H}_{jr}$ such that $T_{k+1}|_{\mathcal{H}_{ju}}$ is a unitary and $T_{k+1}|_{\mathcal{H}_{jr}}$ is an $r$-times unitary. Again, by Remark \ref{rem303}, $\mathcal{H}_{ju}, \mathcal{H}_{jr}$ reduce each of $T_1|_{\mathcal{H}_{j}}, \dotsc, T_k|_{\mathcal{H}_{j}}$. Hence, each tuple 
		$
	(T_1|_{\mathcal{H}_j}, \dotsc, T_k|_{\mathcal{H}_j},  T_{k+1}|_{\mathcal{H}_j})
		$
		orthogonally decomposes into two parts. Continuing this way, for every tuple in the collection, we get $2^{k+1}$ orthogonal subspaces and the other parts of the theorem follow from the induction hypothesis. The proof is now complete.
		
			\end{proof}

		\section{The Polyannulus isometries}\label{annulus isometry}
		
		\vspace{0.2cm}
	
	\noindent An isometry is the restriction of a unitary on an invariant subspace. Analogously, an isometry with respect to the annulus $\mathbb A_r$ is the restriction of an $\A_r$-unitary to an invariant subspace provided that its spectrum is contained in $\CA_r$. Similarly, an $\mathbb A_r^n$-isometry is the restriction of an $\A_r^n$-unitary to a joint-invariant subspace of its components along with the fact that the Taylor joint spectrum of it lies in $\CA_r^n$. We mention once gain that we shall use the term $\A_r$-\textit{isometry} for an $\A_r^1$-isometry. In this Section, we first decipher the structure of an $\A_r$-isometry. Then, we show that an $\A_r^n$-isometry is nothing but a commuting tuple of $\A_r$-isometries. We then find an Wold-type decomposition for an $\A_r^n$-isometry.

	\smallskip
	 
	 Note that a tuple of commuting operators $\underline{T}=(T_1, \dotsc, T_n)$ acting on a Hilbert space $\mathcal{H}$ is said to be \textit{subnormal} if there is a Hilbert space $\mathcal{K} \supseteq \mathcal{H}$ and a commuting tuple of normal operators $\underline{N}=(N_1, \dotsc, N_n)$ in $\mathcal{B}(\mathcal{K})$ such that $\mathcal{H}$ is invariant under $N_1, \dotsc, N_n$ and $N_j|_{\HS}=T_j$ for each $j=1, \dots , n$. The tuple $\underline{N}$ is said to be a \textit{normal extension} of $\underline{T}$. It follows from the theory of subnormal operators (e.g. see \cite{Athavale, Conway, Lubin}) that every subnormal tuple admits a minimal normal extension to the space 
	\[
	\mathcal{K}=\overline{span}\left\{N_1^{*k_1} \dotsc N_m^{*k_m}h \ : \ h \in \mathcal{H} \ \& \ \ k_1, \dotsc, k_m \in \N \cup \{0\} \right\},
	\]
	and a minimal normal extension is unique up to unitary equivalence. We invoke the following results on subnormal operators to prove our theorems of this Section.	
	\begin{thm}[\cite{Conway}, Chapter II]\label{Conway}
		If $S$ is a subnormal operator and $N$ is the minimal normal extension (m.n.e.) of $S$, then  $\sigma(S)=\sigma(N) \  \cup$ some bounded components of $\mathbb{C}\setminus \sigma(N)$.
	\end{thm}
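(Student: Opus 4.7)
My plan is to establish two assertions and combine them: (a) $\sigma(N) \subseteq \sigma(S)$, and (b) for every connected component $\Omega$ of $\rho(N) := \mathbb{C} \setminus \sigma(N)$, either $\Omega \subseteq \rho(S)$ or $\Omega \cap \rho(S) = \emptyset$. Given (a) and (b), the boundedness of $S$ (whence compactness of $\sigma(S)$) forces the unbounded component of $\rho(N)$ into $\rho(S)$, so $\sigma(S) \setminus \sigma(N)$ must be a (possibly empty) union of bounded components of $\rho(N)$, which is the stated conclusion.

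For step (a) I would exploit the minimality of the extension, namely $\mathcal{K} = \overline{\bigvee_{k \geq 0} N^{*k}\mathcal{H}}$. Given $\lambda \in \rho(S)$, the goal is to construct $(N-\lambda)^{-1}$. On the dense linear manifold $\mathcal{D} := \operatorname{span}\{N^{*k}h : h \in \mathcal{H},\, k \geq 0\}$, I would define a candidate inverse by
\[
R\!\left(\textstyle\sum_k N^{*k}h_k\right) := \sum_k N^{*k}(S-\lambda)^{-1}h_k.
\]
Since $N$ is normal, $(N-\lambda)N^{*k} = N^{*k}(N-\lambda)$, and since $\mathcal{H}$ is $N$-invariant with $Nh = Sh$, a direct computation gives $(N-\lambda)R = R(N-\lambda) = I$ on $\mathcal{D}$. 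It then remains to show that $R$ is well-defined and bounded, for then it extends uniquely to a bounded two-sided inverse of $N - \lambda$ on $\mathcal{K}$, forcing $\lambda \in \rho(N)$.

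For step (b), once (a) is in hand, set $R(\lambda) := P(N-\lambda)^{-1}|_{\mathcal{H}}$ on the open set $\rho(N)$, where $P : \mathcal{K} \to \mathcal{H}$ is the orthogonal projection. As a product of an analytic resolvent with bounded operators, $R(\lambda)$ is $\mathcal{B}(\mathcal{H})$-valued analytic on $\rho(N)$. For $|\lambda|$ large, a Neumann-series expansion together with $PN^n|_{\mathcal{H}} = S^n$ yields $R(\lambda) = (S-\lambda)^{-1}$. On any connected component $\Omega$ of $\rho(N)$, the maps $\lambda \mapsto (S-\lambda)R(\lambda)$ and $\lambda \mapsto R(\lambda)(S-\lambda)$ are $\mathcal{B}(\mathcal{H})$-valued analytic; if they agree with $I_{\mathcal{H}}$ at one point of $\Omega$, the identity theorem propagates the equality throughout $\Omega$, proving $\Omega \subseteq \rho(S)$. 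This delivers (b).

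The principal obstacle is the boundedness of $R$ on $\mathcal{D}$ in step (a). Normality gives the identity $\|\sum_k N^{*k}h_k\|^2 = \sum_{j,k}\langle S^k h_j, S^j h_k\rangle$, so both $\|v\|^2$ and $\|Rv\|^2$ become positive quadratic forms in the tuple $(h_0,\ldots,h_m) \in \mathcal{H}^{m+1}$. The required inequality $\|Rv\|^2 \leq \|(S-\lambda)^{-1}\|^2\|v\|^2$ is not a direct consequence of the pointwise bound $((S-\lambda)^{-1})^*(S-\lambda)^{-1} \leq \|(S-\lambda)^{-1}\|^2 I$ because the Gram forms involve off-diagonal terms $\langle S^k h_j, S^j h_k\rangle$. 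Overcoming this requires an operator-matrix factorization of the underlying Gram operator via the map $D : \mathcal{H}^{m+1} \to \mathcal{K}$, $(h_0,\ldots,h_m) \mapsto \sum_k N^{*k}h_k$, leveraging normality and the $N$-invariance of $\mathcal{H}$ symmetrically. Once this is established, step (b) is routine complex analysis, and the stated decomposition of $\sigma(S)$ follows.
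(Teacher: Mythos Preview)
The paper does not prove this theorem; it is quoted from Conway's monograph and used as a black box. So there is no ``paper's own proof'' to compare against, and I can only assess your argument on its merits.

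Your overall architecture is the standard one and is correct: the inclusion $\sigma(N)\subseteq\sigma(S)$ together with the ``all-or-nothing'' behaviour of components of $\rho(N)$ with respect to $\rho(S)$ immediately yields the stated description of $\sigma(S)$. Your step~(b) is fine as written; the compressed resolvent $R(\lambda)=P(N-\lambda)^{-1}|_{\mathcal H}$ is analytic on $\rho(N)$, agrees with $(S-\lambda)^{-1}$ near infinity, and the identity theorem pushes this through each component.

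The issue is step~(a), exactly where you flag it. Defining $R\bigl(\sum_k N^{*k}h_k\bigr)=\sum_k N^{*k}(S-\lambda)^{-1}h_k$ is the right move, but you have not actually shown that $R$ is well-defined and bounded; you only describe what such a proof would have to look like. This is not a cosmetic omission: it is precisely the substantive content of the theorem. The standard way to close this gap is the commutant-lifting lemma for minimal normal extensions: if $A\in\mathcal B(\mathcal H)$ commutes with $S$, then the assignment $\sum_k N^{*k}h_k\mapsto\sum_k N^{*k}Ah_k$ extends to a bounded operator $\widehat A$ on $\mathcal K$ with $\|\widehat A\|=\|A\|$ and $\widehat A N=N\widehat A$. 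The proof uses exactly the Gram-matrix positivity you allude to: writing $\bigl\|\sum_k N^{*k}h_k\bigr\|^2=\sum_{j,k}\langle S^jh_k,S^kh_j\rangle$, one recognises this as $\langle G\,\underline h,\underline h\rangle$ for the positive operator matrix $G=[S^{*k}S^j]_{j,k}$ on $\mathcal H^{m+1}$, and the required inequality becomes $(I\otimes A)^*G(I\otimes A)\le\|A\|^2 G$, which follows from $G\ge 0$ and the fact that $I\otimes A$ commutes with $G$ (since $AS=SA$). Applying this with $A=(S-\lambda)^{-1}$ gives your $R=\widehat A$, and then $(N-\lambda)R=R(N-\lambda)=I$ on $\mathcal D$ extends by continuity. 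Until you write this positivity argument out, your proposal remains an outline rather than a proof.
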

		\begin{lem}[\cite{AthavaleIII}, Proposition 2]\label{AthavaleIII}
		Let $T_1, \dotsc, T_n$ be a set of commuting operators on a Hilbert space $\mathcal{H}$ such that $T_1^*T_1+\dotsc+T_n^*T_n=I_\mathcal{H}$. Then $(T_1, \dotsc, T_n)$ is a subnormal tuple. 
	\end{lem}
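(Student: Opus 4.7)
The plan is to verify joint subnormality via the Halmos--Bram--Ito--Lubin positivity criterion: a commuting tuple $(T_1, \dotsc, T_n)$ is subnormal if and only if for every $m \geq 0$ and every finite family $\{h_\alpha\}_{|\alpha|\leq m} \subset \mathcal{H}$ one has
\begin{equation*}
\sum_{|\alpha|, |\beta| \leq m} \langle T^{*\alpha} T^{\beta} h_\beta, h_\alpha \rangle \geq 0.
\end{equation*}
This reduces subnormality to a purely algebraic moment-positivity statement about the sequence $\{T^{*\alpha}T^{\beta}\}$, which is the shape of the hypothesis.

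The first step is to extract the characteristic moment identity of a spherical isometry. Starting from $\sum_i T_i^* T_i = I_\mathcal{H}$, I would iterate by sandwiching (apply $\sum_j T_j^*(\,\cdot\,) T_j$ to both sides) and regroup the resulting products using the commutativity of the $T_i$'s. A straightforward induction then yields, for every $k \geq 0$,
\begin{equation*}
\sum_{|\alpha| = k} \frac{k!}{\alpha!}\, T^{*\alpha} T^{\alpha} \;=\; I_\mathcal{H}.
\end{equation*}
This is the operator counterpart of the scalar identity $\sum_{|\alpha|=k}\tfrac{k!}{\alpha!}|z^\alpha|^2 = 1$ valid on the unit sphere $S^{2n-1}\subset\mathbb{C}^n$, and it identifies $\{T^{*\alpha}T^\alpha\}$ as the moments of a candidate positive $\mathcal{B}(\mathcal{H})$-valued measure concentrated on $S^{2n-1}$.

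The second step is the GNS-type construction of the normal extension: equip $\mathcal{H}\otimes\mathbb{C}[z_1,\dotsc,z_n]$ with the sesquilinear form $\langle h \otimes p,\; k \otimes q \rangle := \langle (q^* p)(T,T^*)\,h, k \rangle$ (evaluated in normal order), quotient by its null ideal and complete to obtain $\mathcal{K} \supseteq \mathcal{H}$, and define $N_i$ to be multiplication by $z_i$. The map $h \mapsto [h \otimes 1]$ embeds $\mathcal{H}$ isometrically and intertwines $T_i$ with $N_i$; commutativity of $(N_1,\dotsc,N_n)$ is automatic.

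The main obstacle is twofold: (a) proving that the form is positive semi-definite, which is exactly the Halmos--Bram--Ito--Lubin criterion, and (b) showing joint normality of $(N_1,\dotsc,N_n)$. For (a), the multinomial identity is the essential tool, as it recasts the quadratic form as integration of a polynomial against a genuine positive measure on $S^{2n-1}$. For (b), the naive GNS construction only produces a commuting spherical isometry on $\mathcal{K}$, not a normal tuple; one must argue further that $\mathcal{K}$ is isomorphic to an $L^2$-space over $S^{2n-1}$, on which coordinate multiplications are automatically normal. A more economical route, which I would prefer, bypasses GNS entirely and invokes an operator-valued Hausdorff moment theorem on the sphere: the multinomial identity produces a $\mathcal{B}(\mathcal{H})$-valued semi-spectral measure on $S^{2n-1}$, whose Naimark dilation furnishes the minimal commuting normal extension in one stroke.
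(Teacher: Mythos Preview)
The paper does not supply its own proof of this lemma; it is quoted verbatim as Proposition~2 from Athavale's paper \cite{AthavaleIII} and used as a black box. So there is no in-paper argument to compare against.

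Your outline is correct and is in fact very close to Athavale's original proof. The multinomial identity $\sum_{|\alpha|=k}\frac{k!}{\alpha!}\,T^{*\alpha}T^{\alpha}=I$ is exactly the right computation, and the route you call ``more economical'' --- interpreting these identities as the moments of a positive $\mathcal{B}(\mathcal{H})$-valued measure on $S^{2n-1}$ and then dilating via Naimark/Stinespring --- is essentially what Athavale does: he builds a unital positive map $C(S^{2n-1})\to\mathcal{B}(\mathcal{H})$ from the moment data and applies Stinespring's theorem to obtain a $*$-representation whose coordinate multiplications furnish the commuting normal extension. Your alternative GNS sketch is also viable but, as you yourself note, needs the extra step of identifying the completion with an $L^2$-space to secure normality; the Stinespring route avoids this detour. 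One small point: verifying the Halmos--Bram--Ito--Lubin positivity directly from the multinomial identity is not entirely immediate (one must represent the relevant quadratic form as integration of a sum-of-squares against the spherical measure), so if you pursue that variant you should spell out that reduction rather than leave it implicit.
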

	\begin{lem}[\cite{Lubin}, Corollary 1]\label{Lubin}
	Let $\underline{S}=(S_1, \dotsc, S_n)$ be a subnormal tuple acting on a Hilbert space $\mathcal{H}$ and let $\underline{N}=(N_1, \dotsc, N_n)$ be its minimal normal extension. Then each $N_i$ is unitarily equivalent to the minimal normal extension of $S_i$.
\end{lem}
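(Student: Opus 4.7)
The plan is to locate, inside the extension space $\mathcal{K}$, a canonical realisation of the minimal normal extension of the single operator $S_i$, and then to identify it with all of $N_i$ by exploiting the minimality of $\underline{N}$ as the m.n.e.\ of $\underline{S}$. The natural candidate for such a realisation is the closed subspace
\[
\mathcal{L}_i := \overline{\mathrm{span}}\left\{N_i^{*k} h \,:\, h \in \mathcal{H},\; k \in \N \cup \{0\}\right\} \subseteq \mathcal{K}.
\]

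The first step is to verify that $\mathcal{L}_i$ reduces $N_i$ and that $N_i|_{\mathcal{L}_i}$ is a minimal normal extension of $S_i$. Invariance of $\mathcal{L}_i$ under $N_i^*$ is immediate from the definition. The $N_i$-invariance is where the normality of $N_i$ enters: since $N_i h = S_i h \in \mathcal{H}$ for every $h \in \mathcal{H}$, we have $N_i(N_i^{*k}h) = N_i^{*k}(N_i h) = N_i^{*k}(S_i h) \in \mathcal{L}_i$. Therefore $N_i|_{\mathcal{L}_i}$ is a normal extension of $S_i$, and by construction $\mathcal{L}_i$ is the smallest reducing subspace for $N_i$ containing $\mathcal{H}$. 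Uniqueness of the minimal normal extension (up to unitary equivalence) gives that $N_i|_{\mathcal{L}_i}$ is unitarily equivalent to $M_i$, the m.n.e.\ of $S_i$.

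The heart of the proof is the second step: showing that $N_i$ on all of $\mathcal{K}$ is unitarily equivalent to $N_i|_{\mathcal{L}_i}$. The cleanest route would be to prove $\mathcal{L}_i = \mathcal{K}$ by showing that $\mathcal{L}_i$ reduces every $N_j$ and then invoking the minimality of $\underline{N}$. The Fuglede-Putnam theorem, applied to the commuting normal tuple $(N_1, \dotsc, N_n)$, forces double commutativity, so $N_j$ commutes with $N_i^{*}$, and one gets the $N_j$-invariance straight away:
\[
N_j(N_i^{*k}h) \;=\; N_i^{*k}(N_j h) \;=\; N_i^{*k}(S_j h) \;\in\; \mathcal{L}_i.
\]

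The main obstacle lies in the $N_j^{*}$-invariance: for $h \in \mathcal{H}$ the vector $N_j^{*} h$ is in general not in $\mathcal{H}$, since $\mathcal{H}$ is only jointly invariant, not jointly reducing, under $\underline{N}$. To bypass this, I would compare the spectral data of $N_i$ on $\mathcal{K}$ with those of $N_i|_{\mathcal{L}_i}$ on $\mathcal{L}_i$. Using the joint spectral measure of the normal tuple $(N_1, \dotsc, N_n)$ on $\sigma_T(\underline{N}) \subseteq \C^n$ together with Fuglede-Putnam, one argues that the scalar spectral measure class and the spectral multiplicity function of $N_i$ are already entirely determined by how $N_i$ acts on the cyclic-type set $\mathcal{H}$, and are therefore the same on $\mathcal{K}$ as on $\mathcal{L}_i$. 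Since two normal operators with equivalent scalar spectral measures and a.e.\ equal multiplicity functions are unitarily equivalent, the conclusion follows, and this final spectral-multiplicity comparison is, I expect, the hardest step, as it is where the minimality of $\underline{N}$ as the m.n.e.\ of $\underline{S}$ (and not merely the normality of each $N_i$) is used in an essential way.
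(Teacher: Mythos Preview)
The paper does not give its own proof of this lemma; it is quoted verbatim from Lubin's paper \cite{Lubin} and used as a black box. So there is no in-paper argument to compare against.

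On your attempt itself: the first step is correct and is the standard construction of the m.n.e.\ of a single $S_i$ inside $\mathcal{K}$. Your diagnosis in the second step is also correct: in general $\mathcal{L}_i \neq \mathcal{K}$, so the ``cleanest route'' genuinely fails. (A concrete example: take $S_1 = I$ and $S_2 = M_z$ on $H^2(\mathbb{T})$; the minimal normal extension of the pair is $(I, M_z)$ on $L^2(\mathbb{T})$, but $\mathcal{L}_1 = H^2 \subsetneq L^2$.) The problem is that your replacement argument is not a proof but a restatement of the goal. Saying that ``the scalar spectral measure class and the spectral multiplicity function of $N_i$ are already entirely determined by how $N_i$ acts on $\mathcal{H}$'' is precisely the assertion that $N_i \cong N_i|_{\mathcal{L}_i}$ in the language of multiplicity theory; you have not indicated \emph{why} the multiplicity function of $N_i$ on $\mathcal{K} \ominus \mathcal{L}_i$ should be dominated by (let alone equal to) that on $\mathcal{L}_i$. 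In the example above this works only because both pieces happen to be infinite-dimensional, not because of anything you wrote. The actual mechanism in Lubin's argument uses the joint spectral measure of $\underline{N}$ together with minimality to control the fibre dimensions of each $N_i$ over its spectrum, and that step --- which you yourself flag as ``the hardest'' --- is entirely absent here.
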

	\noindent For an isometry $V$, it is well known (see CH-I in \cite{Nagy}) that either $\sigma(V)=\DC$ or $\sigma(V) \subseteq \T$. An analogue of this result for an $\mathbb{A}_r$-isometry, which is stated below, was proved in \cite{Bello}. 
	\begin{thm}[\cite{Bello}, Theorem 2.8]  \label{thm404}
		Let $V$ be an invertible operator that admits an extension to an $\A_r$-unitary. Then $V$ is an $\mathbb{A}_r$-isometry and $\sigma(V)\subseteq \mathbb{T} \cup r\mathbb{T}$ or $\sigma(V)=\overline{\mathbb{A}}_r$.
		
	\end{thm}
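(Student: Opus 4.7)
The plan is to present $V$ as a subnormal operator whose minimal normal extension $N$ lives on a reducing subspace of the given $\A_r$-unitary $U$, and then let Conway's spectrum theorem (Theorem~\ref{Conway}) together with the invertibility hypothesis pin down $\sigma(V)$.

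First I would write $V = U|_{\mathcal{H}}$, where $U$ is the $\A_r$-unitary on a larger space $\mathcal{K} \supseteq \mathcal{H}$, and note that $V$ is subnormal because $U$ is normal. The natural candidate for the ambient space of the minimal normal extension is
\[
\mathcal{K}_0 := \overline{span}\{ U^{*k}h : h \in \mathcal{H},\ k \geq 0 \}.
\]
Using the commutation $UU^* = U^*U$ and the $U$-invariance of $\mathcal{H}$, I would verify that $\mathcal{K}_0$ is invariant under both $U$ (since $UU^{*k}h = U^{*k}(Uh)$ with $Uh \in \mathcal{H}$) and $U^*$ (trivially), so $\mathcal{K}_0$ in fact reduces $U$. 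Minimality then identifies $N$ with $U|_{\mathcal{K}_0}$, and since restriction of a normal operator to a reducing subspace can only shrink the spectrum, one obtains
\[
\sigma(N) \subseteq \sigma(U) \subseteq \T \cup r\T.
\]

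The main step is a case analysis driven by Theorem~\ref{Conway}, which gives $\sigma(V) = \sigma(N) \cup \mathcal{B}$ for some union $\mathcal{B}$ of bounded components of $\C \setminus \sigma(N)$. I would decompose $\sigma(N) = A \cup B$ with $A \subseteq \T$ and $B \subseteq r\T$ closed, and exploit the invertibility of $V$ (so $0 \notin \sigma(V)$) to prune $\mathcal{B}$. The planar-topology claim I would verify is: if either $A \subsetneq \T$ or $B \subsetneq r\T$, then every bounded component of $\C \setminus \sigma(N)$ already contains the origin, because a gap in one of the circles merges the central disk $\{z : |z|<r\}$ either with $\A_r$ or with the unbounded exterior, leaving at most one bounded component and one which sits around $0$. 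In this situation invertibility forces $\mathcal{B} = \emptyset$, so $\sigma(V) = \sigma(N) \subseteq \T \cup r\T$. The only remaining possibility is $\sigma(N) = \T \cup r\T$, whose complement has exactly the two bounded components $\{z : |z|<r\}$ and $\A_r$; the first is excluded by invertibility, so either $\mathcal{B} = \emptyset$ and $\sigma(V) = \T \cup r\T$, or $\mathcal{B} = \A_r$ and $\sigma(V) = \CA_r$.

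In either branch one has $\sigma(V) \subseteq \CA_r$, and combined with the hypothesis that $V$ is already presented as the restriction of an $\A_r$-unitary to an invariant subspace, this is precisely the definition of an $\A_r$-isometry. The principal difficulty I expect is the planar-topology bookkeeping for the various possible shapes of $\sigma(N)$ inside $\T \cup r\T$; once the bounded complementary components are correctly enumerated, invertibility cleanly eliminates every possibility except the two stated in the theorem.
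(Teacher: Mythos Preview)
The paper does not actually prove Theorem~\ref{thm404}; it is quoted verbatim from \cite{Bello} (Theorem~2.8) and used as a black box. So there is no ``paper's own proof'' to compare against.

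That said, your argument is a correct self-contained proof and fits naturally with the tools already assembled in Section~\ref{annulus isometry}. The reduction to the minimal normal extension $N=U|_{\mathcal K_0}$ is standard, and the key inclusion $\sigma(N)\subseteq\sigma(U)\subseteq\mathbb T\cup r\mathbb T$ follows because $\mathcal K_0$ reduces $U$. Your invocation of Theorem~\ref{Conway} and the subsequent planar-topology case split are sound: when at least one of the circles is incomplete, any bounded complementary component must reach the origin (either the open disc $\{|z|<r\}$ survives, or it merges with $\mathbb A_r$ through a gap in $r\mathbb T$, but in both cases $0$ sits in the resulting region), and invertibility kills it; when $\sigma(N)=\mathbb T\cup r\mathbb T$ exactly, the two bounded components are $\{|z|<r\}$ and $\mathbb A_r$, and only the latter can be adjoined. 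One cosmetic point: your sentence ``leaving at most one bounded component and one which sits around $0$'' is redundant---in the non-full-circle cases there is at most one bounded component and it always contains $0$. Tightening that phrasing would make the case analysis read more cleanly.
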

		
		It follows from Theorem \ref{thm302} that an invertible operator $T$ is an $\mathbb{A}_r$-unitary if and only if $rT^{-1}$ is an $\mathbb{A}_r$-unitary. Below we show that a similar result holds for an $\mathbb{A}_r$-isometry too.
	\begin{lem}\label{lem406}
		Let $V$ be an invertible operator. Then $V$ is an $\mathbb{A}_r$-isometry if and only if $rV^{-1}$ is an $\mathbb{A}_r$-isometry.
	\end{lem}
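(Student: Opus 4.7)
The plan is to lift the unitary-level symmetry $T\leftrightarrow rT^{-1}$ recorded in Theorem \ref{thm302} up to the isometry level via the extension definition. Since the operation $V\mapsto rV^{-1}$ is an involution on invertible operators (indeed $r(rV^{-1})^{-1}=V$), it will suffice to prove one implication; the other follows by applying the first to $W=rV^{-1}$, which is itself invertible.

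So I would assume that $V$ is an invertible $\mathbb{A}_r$-isometry on $\mathcal{H}$ and produce an $\mathbb{A}_r$-unitary extension $U$ on some $\mathcal{K}\supseteq\mathcal{H}$ with $U|_\mathcal{H}=V$. First I would note that $U$ is invertible on $\mathcal{K}$ because $\sigma(U)\subseteq\mathbb{T}\cup r\mathbb{T}$ misses $0$. The pivotal step is to show that $\mathcal{H}$ is invariant not only under $U$ but also under $U^{-1}$: since $V$ is invertible on $\mathcal{H}$ we have $U(\mathcal{H})=V(\mathcal{H})=\mathcal{H}$, and hence $U^{-1}(\mathcal{H})=U^{-1}U(\mathcal{H})=\mathcal{H}$. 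It follows that $(rU^{-1})|_\mathcal{H}=rV^{-1}$. By Theorem \ref{thm302}, $rU^{-1}$ is an $\mathbb{A}_r$-unitary on $\mathcal{K}$, so $rV^{-1}$ is realised as the restriction of an $\mathbb{A}_r$-unitary to an invariant subspace. To complete the verification that $rV^{-1}$ is an $\mathbb{A}_r$-isometry one still needs the spectral condition, and this is immediate from the spectral mapping theorem: if $\lambda\in\sigma(V)\subseteq\overline{\mathbb{A}}_r$ then $r\le|\lambda|\le 1$, so $r\le|r/\lambda|\le 1$, giving $\sigma(rV^{-1})\subseteq\overline{\mathbb{A}}_r$.

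The main obstacle I anticipate is precisely the invariance of $\mathcal{H}$ under $U^{-1}$: for a general invariant (non-reducing) subspace of an invertible operator this is false. The proof rests on the observation that the hypothesis ``$V$ invertible'' is exactly the right strengthening to force $U(\mathcal{H})=\mathcal{H}$, which in turn propagates invariance to $U^{-1}$. Once this point is in place, the rest of the argument is one appeal to Theorem \ref{thm302} and an elementary spectral mapping, and the converse direction is obtained by applying the established implication to $rV^{-1}$ in place of $V$.
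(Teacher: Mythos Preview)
Your proof is correct and follows essentially the same route as the paper: extend $V$ to an $\mathbb{A}_r$-unitary $U$, use invertibility of $V$ on $\mathcal{H}$ to see that $\mathcal{H}$ is invariant under $U^{-1}$ and that $U^{-1}|_\mathcal{H}=V^{-1}$, then apply Theorem~\ref{thm302} and the spectral mapping theorem. The paper writes the invariance step as the pointwise identity $V^{-1}y=U^{-1}y$, whereas you phrase it as $U(\mathcal{H})=V(\mathcal{H})=\mathcal{H}$; these are the same observation, and your explicit use of the involution $V\mapsto rV^{-1}$ for the converse is exactly what the paper means by ``similar.''
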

	\begin{proof}
		Assume that $V$ is an $\mathbb{A}_r$-isometry on $\mathcal{H}$. Then $\sigma(V) \subseteq \CA_r$ and there exists an $\mathbb{A}_r$-unitary $U$ on some space $\mathcal{K} \supseteq \mathcal{H}$ such that $\HS$ is invariant under $V$ and $V=U|_\mathcal{H}$. It follows from the spectral mapping theorem that 
		\[
		\sigma(rV^{-1})=\{r\lambda^{-1} \in \C \ : \ \lambda \in \sigma(V)\} \subseteq \{r\lambda^{-1} \in \C \ : \ \lambda \in \CA_r\} \subseteq \CA_r.
		\]
		Now, we prove that $V^{-1}=U^{-1}|_{\mathcal{H}}$. To see this, note that for $x \in \mathcal{H}$, we have $U^{-1}Vx=x$. Let $y \in \mathcal{H}$. Then there is a unique $x \in \mathcal{H}$ such that $Vx=y$. Thus,
		\[
			V^{-1}y=x = U^{-1}Vx = U^{-1}y.
		\]
		Therefore, $rV^{-1}=rU^{-1}|_{\mathcal{H}}$. It follows from Theorem \ref{thm302} that $rU^{-1}$ is an $\mathbb{A}_r$-unitary on $\mathcal{K}$ and consequently, $rV^{-1}$ is an $\mathbb{A}_r$-isometry on $\mathcal{H}$. A proof of the converse is similar.
	\end{proof}
	 Much like an $\A_r$-unitary, we now seek for an algebraic characterization for an $\A_r$-isometry. Indeed, if $V$ is an $\mathbb{A}_r$-isometry on a space $\mathcal{H}$, then there is an $\mathbb{A}_r$-unitary $U$ acting on $\mathcal{K} \supseteq \mathcal{H}$ such that $V=U|_{\mathcal{H}}$. It follows from Proposition \ref{prop304} that 
	\begin{equation*}
		\begin{split}
			0=(I-U^*U)(U^*U-r^2)
			=-U^{*2}U^2+(1+r^2)U^*U-r^2I.\\		
		\end{split}
	\end{equation*}
Since $U$ is an extension of $V$, we have 
\[
-V^{*2}V^2+(1+r^2)V^*V-r^2I=P_{\mathcal{H}}(-U^{*2}U^2+(1+r^2)U^*U-r^2I)|_{\mathcal{H}}=0.
\]
Consequently, 
$
V^*V+(rV^{-1})^*rV^{-1}-(1+r^2)I=0,
$ 
as $V$ is an invertible operator. Nevertheless, this statement and its converse were proved by Bello and Yakubovich in \cite{Bello}. We provide an alternative proof here.
	\begin{thm}\label{thm407}
		Let $V$ be an invertible operator on a Hilbert space $\mathcal{H}$. Then $V$ is an $\mathbb{A}_r$-isometry if and only if 
		\[
		-V^{*2}V^2+(1+r^2)V^*V-r^2I=0,
		\]
		or equivalently,
		\[   
		V^*V+(rV^{-1})^*rV^{-1}=(1+r^2)I.
		\]	
	\end{thm}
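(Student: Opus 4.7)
The forward implication is essentially the compression argument already sketched in the paragraph preceding the statement: an $\mathbb{A}_r$-unitary extension $U$ of $V$ satisfies $-U^{*2}U^2 + (1+r^2)U^*U - r^2 I = 0$ by Proposition \ref{prop304}, and compressing to the joint invariant subspace $\mathcal{H}$ yields the first identity for $V$; invertibility of $V$ turns it into the second identity. The substantive content is the converse.

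Assume $V$ is invertible on $\mathcal{H}$ and $V^*V + (rV^{-1})^*(rV^{-1}) = (1+r^2)I$. I would normalize by setting $T_1 = (1+r^2)^{-1/2}V$ and $T_2 = r(1+r^2)^{-1/2}V^{-1}$; then $T_1$ and $T_2$ commute, $T_1^*T_1 + T_2^*T_2 = I_{\mathcal{H}}$, and $T_1 T_2 = cI_{\mathcal{H}}$ with $c = r(1+r^2)^{-1} \neq 0$. Lemma \ref{AthavaleIII} supplies a minimal normal extension $(N_1, N_2)$ on some Hilbert space $\mathcal{K} \supseteq \mathcal{H}$; the Fuglede--Putnam theorem then forces the four operators $N_1, N_1^*, N_2, N_2^*$ to commute pairwise.

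The technical core is to promote both identities from $\mathcal{H}$ to all of $\mathcal{K}$. The normal operator $N_1 N_2 - cI$ commutes with every $N_j^*$ and vanishes on $\mathcal{H}$ as a vector equation, so it vanishes on the dense spanning set $\{N_1^{*k_1}N_2^{*k_2}h : h \in \mathcal{H},\, k_j \geq 0\}$ of $\mathcal{K}$, giving $N_1 N_2 = cI$ on $\mathcal{K}$. The harder identity is $X := N_1^*N_1 + N_2^*N_2 - I = 0$, which I expect to be the main obstacle: on $\mathcal{H}$ only the weak equality $\langle Xh, h' \rangle = 0$ for $h, h' \in \mathcal{H}$ is immediate, which merely says $X(\mathcal{H}) \perp \mathcal{H}$ rather than $X|_{\mathcal{H}} = 0$. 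The trick is to exploit that $X$ commutes with $N_j$ and $N_j^*$: for $h, h' \in \mathcal{H}$ and $k_1, k_2 \geq 0$,
\[
\langle Xh, N_1^{*k_1}N_2^{*k_2}h' \rangle = \langle X T_1^{k_1}T_2^{k_2}h, h' \rangle = 0,
\]
since $T_1^{k_1}T_2^{k_2}h \in \mathcal{H}$ and $X(\mathcal{H}) \perp \mathcal{H}$. This forces $Xh \perp \mathcal{K}$, so $X|_{\mathcal{H}} = 0$; commuting $X$ through $N_j^*$ then propagates $X = 0$ to all of $\mathcal{K}$.

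With both identities in hand on $\mathcal{K}$, $N_1$ is invertible (from $N_1 N_2 = cI$ with $c \neq 0$) and $N_2 = cN_1^{-1}$, which reduces the sphere identity to $N_1^*N_1 + c^2(N_1^*N_1)^{-1} = I$, a quadratic in the positive operator $N_1^*N_1$ whose roots are $1/(1+r^2)$ and $r^2/(1+r^2)$. Setting $U := \sqrt{1+r^2}\, N_1$ produces a normal operator on $\mathcal{K}$ extending $V$, with $U^*U$ having spectrum in $\{1, r^2\}$; hence $(I - U^*U)(U^*U - r^2 I) = 0$ and Proposition \ref{prop304} identifies $U$ as an $\mathbb{A}_r$-unitary. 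Finally, Theorem \ref{thm404} delivers $\sigma(V) \subseteq \overline{\mathbb{A}}_r$, completing the proof that $V$ is an $\mathbb{A}_r$-isometry.
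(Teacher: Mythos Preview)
Your proof is correct and shares the same overall architecture as the paper's: obtain subnormality from Lemma \ref{AthavaleIII}, show the minimal normal extension is an $\mathbb{A}_r$-unitary via Proposition \ref{prop304}, and conclude with Theorem \ref{thm404}. The difference is in how the $\mathbb{A}_r$-unitary identity is established on the extension space. The paper works with a \emph{single} normal extension $N$ of $V$ and verifies $-N^{*2}N^2+(1+r^2)N^*N-r^2I=0$ on $\mathcal{H}$ by directly expanding $\|(-N^{*2}N^2+(1+r^2)N^*N-r^2I)h\|^2$ as a combination of $\|V^jh\|^2$ and grouping so that the norm identity $-\|V^2g\|^2+(1+r^2)\|Vg\|^2-r^2\|g\|^2=0$ (with $g=h,\,Vh,\,V^2h$) kills each group. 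You instead pass to the \emph{joint} normal extension of the normalized pair $(T_1,T_2)$, use Fuglede to make $N_1,N_2,N_1^*,N_2^*$ commute pairwise, and then promote the algebraic identities $N_1N_2=cI$ and $N_1^*N_1+N_2^*N_2=I$ from $\mathcal{H}$ to $\mathcal{K}$ by commutation; the factorization $(I-U^*U)(U^*U-r^2I)=0$ then drops out of the quadratic in $N_1^*N_1$. Your route is more structural and sidesteps the explicit norm bookkeeping, at the price of invoking joint subnormality and Fuglede; the paper's route is more elementary and self-contained but computational. Both are sound.
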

	\begin{proof}
		The forward part is shown above and thus we prove the converse part only. Assume that $V$ is an invertible operator that satisfies $V^{*2}V^2=(1+r^2)V^*V-r^2I$. Therefore, the invertibility of $V$ yields that $V^*V+(rV^{-1})^*rV^{-1}=(1+r^2)I$. It follows from Lemma \ref{AthavaleIII} that $V$ and $rV^{-1}$ are subnormal operators. It is easy to see that for every $h \in \mathcal{H}$, we have
		\begin{equation}\label{norm_equality}
			-\|V^2h\|^2+(1+r^2)\|Vh\|^2-r^2\|h\|^2=0.
		\end{equation}
Let $N$ be a normal extension of $V$ acting on some space containing $\mathcal{H}$. The minimal normal extension of $V$ is the normal operator $N$ restricted to the reducing subspace 
		\[
		\mathcal{K}_0=\overline{\mbox{span}}\{N^{*j}h \ | \ j \geq 0, h \in \mathcal{H}\}.
		\]
We claim that $N$ is an $\mathbb{A}_r$-unitary on $\mathcal{K}_0$. By Proposition \ref{prop304}, it suffices to show that 
		\begin{equation}\label{N_annulus_unitary}
			-N^{*2}N^2+(1+r^2)N^*N-r^2I=0 \quad \mbox{on $\mathcal{K}_0$}.
		\end{equation}
		Since $N$ is a normal operator, it follows from the definition of $\mathcal{K}_0$ that it is sufficient to show :  
		\[
		-N^{*2}N^2+(1+r^2)N^*N-r^2I=0 \quad \text{on} \ \mathcal{H}.
		\]
Let $h \in \mathcal{H}$. Then some routine calculations yield that $\|(-N^{*2}N^2+(1+r^2)N^*N-r^2)h\|^2$ equals
		\begin{equation*}
			\begin{split} 
				&\bigg(\|N^4h\|^2-(1+r^2)\|N^3h\|^2+r^2\|N^2h\|^2\bigg)-(1+r^2)\bigg(\|N^3h\|^2-(1+r^2)\|N^2h\|^2+r^2\|Nh\|^2\bigg)\\
				&+r^2\bigg(\|N^2h\|^2-(1+r^2)\|Nh\|^2+r^2\|h\|^2\bigg),
				\\
			\end{split}
		\end{equation*}
		which is same as
		\begin{equation*}
			\begin{split} 
				&\bigg(\|V^4h\|^2-(1+r^2)\|V^3h\|^2+r^2\|V^2h\|^2\bigg)-(1+r^2)\bigg(\|V^3h\|^2-(1+r^2)\|V^2h\|^2+r^2\|Vh\|^2\bigg)\\
				&+r^2\bigg(\|V^2h\|^2-(1+r^2)\|Vh\|^2+r^2\|h\|^2\bigg) \qquad \qquad \qquad \qquad \qquad \quad \quad \qquad \qquad [\because V=N|_{\mathcal{H}}]
				\\
		&=0,		
			\end{split}
		\end{equation*}
		where the last equality follows from (\ref{norm_equality}). Putting everything together, we have that 
		\[
		\|(-N^{*2}N^2+(1+r^2)N^*N-r^2)h\|=0,
		\]
		for every $h \in \mathcal{H}$ and hence for every $h \in \mathcal{K}_0$. It follows from Proposition \ref{prop304} that $N$ is an $\A_r$-unitary on $\mathcal{K}_0$. Consequently, $V$ is an invertible operator that admits a normal extension to the $\A_r$-unitary $N$. By virtue of Theorem \ref{thm404}, $V$ is an $\A_r$-isometry. The proof is complete. 
	\end{proof}
	It is evident (from the definition) that every $\mathbb{A}_r$-unitary is an $\mathbb{A}_r$-isometry. We intend to find a necessary and sufficient condition for the converse to hold. For this we need the following result due to Taylor \cite{Taylor}.

\begin{thm}[\cite{Taylor}, Theorem 4.9]\label{Taylor's} If $\underline{T}=(T_1, \dotsc, T_n)$ is an $n$-tuple of commuting operators on a Banach space $X$ and if $\sigma_T(\underline{T})=K_1 \cup K_2$, where, $K_1$ and $K_2$ are disjoint compact sets in $\mathbb{C}^n,$ then there are closed subspaces $X_1$ and $X_2$ of $X$ such that
	\begin{enumerate}
		\item $X=X_1\oplus X_2;$
		\item $X_1, X_2$ are invariant under any operator which commutes with each $T_k;$
		\item $\sigma_T(\underline{T}|_{X_1})=K_1$ and $\sigma_T(\underline{T}|_{X_2})=K_2$, where, $\underline{T}|_{X_i}=(T_1|_{X_i}, \dotsc, T_n|_{X_i})$ for $i=1, 2$. 
	\end{enumerate}
\end{thm}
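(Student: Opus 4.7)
The plan is to invoke Taylor's analytic functional calculus for commuting tuples and construct a spectral idempotent separating $K_1$ from $K_2$. Since $K_1, K_2$ are disjoint compact subsets of $\mathbb C^n$, I can choose disjoint open sets $U_1 \supseteq K_1$ and $U_2 \supseteq K_2$ in $\mathbb C^n$, and define $f\colon U_1\cup U_2\to\mathbb C$ by $f\equiv 1$ on $U_1$ and $f\equiv 0$ on $U_2$. Then $f$ is holomorphic on $U:=U_1\cup U_2$, which is an open neighborhood of $\sigma_T(\underline T)=K_1\cup K_2$. Taylor's calculus gives us $P:=f(\underline T)\in\mathcal B(X)$, and since $f^2=f$ on $U$, it follows from the multiplicativity of the calculus that $P^2=P$. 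Hence $P$ is an idempotent, and I set $X_1:=PX$, $X_2:=(I-P)X$, so that $X=X_1\oplus X_2$ as a topological direct sum.

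The next step is to establish invariance and commutation. Since $f(\underline T)$ is built from the Koszul complex of $\underline T$ and is therefore a norm limit of polynomials in $T_1,\dots,T_n$ composed with resolvents supported near $\sigma_T(\underline T)$, the operator $P$ lies in the bicommutant of $\{T_1,\dots,T_n\}$. Consequently, any operator $S$ commuting with every $T_k$ also commutes with $P$, hence leaves both $X_1$ and $X_2$ invariant. Taking $S=T_k$ in particular shows that each $X_i$ is invariant under each $T_k$, so the restrictions $\underline T|_{X_i}$ are well-defined commuting $n$-tuples on $X_i$.

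The main technical step, and the one I expect to be the principal obstacle, is the identification $\sigma_T(\underline T|_{X_i})=K_i$. The inclusion $\sigma_T(\underline T)\subseteq\sigma_T(\underline T|_{X_1})\cup\sigma_T(\underline T|_{X_2})$ together with the reverse projection property of Taylor spectra for direct-sum decompositions yields $\sigma_T(\underline T)=\sigma_T(\underline T|_{X_1})\cup\sigma_T(\underline T|_{X_2})$. To pin each summand to the correct $K_i$, I would apply the spectral mapping theorem to $f$: the idempotent $P$ satisfies $\sigma_T(P)\subseteq f(\sigma_T(\underline T))=\{0,1\}$, and the eigenspaces $\ker(P)=X_2$ and $\ker(I-P)=X_1$ carry disjoint spectral pieces. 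More concretely, for any $\lambda\in U_2$ one shows using Cauchy-Weil type integral representations that the Koszul complex of $\underline T|_{X_1}-\lambda$ is exact on $X_1$, because $P$ can be approximated by operators of the form $g(\underline T)$ with $g$ holomorphic and vanishing at $\lambda$; this forces $\sigma_T(\underline T|_{X_1})\cap U_2=\emptyset$, and symmetrically $\sigma_T(\underline T|_{X_2})\cap U_1=\emptyset$. Combined with the union decomposition above, this gives $\sigma_T(\underline T|_{X_i})=K_i$, completing the proof.
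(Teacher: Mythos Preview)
The paper does not supply its own proof of this statement: it is quoted verbatim as Theorem~4.9 of Taylor's paper \cite{Taylor} and used as a black box in the proof of Proposition~\ref{prop409}. So there is nothing in the paper to compare your argument against.

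That said, your sketch is essentially the standard route to Taylor's result, and it is broadly correct. One imprecision worth flagging: your justification that $P=f(\underline T)$ lies in the bicommutant --- namely, that it is ``a norm limit of polynomials in $T_1,\dots,T_n$ composed with resolvents'' --- is not how this works in the several-variable setting. The Taylor functional calculus is built from Cauchy--Weil integrals over the Koszul complex, not from resolvent approximations, and the bicommutant property is obtained by showing that any $S$ commuting with each $T_k$ induces a cochain map on the parametrized Koszul complex that intertwines with the Cauchy--Weil kernel. Your conclusion is right, but the reason you give would only be adequate in the single-variable Riesz--Dunford picture. Similarly, the spectral identification $\sigma_T(\underline T|_{X_i})=K_i$ is gestured at rather than proved; the clean argument uses that the functional calculus on $X_i$ agrees with restricting $g(\underline T)$ to $X_i$ for $g$ holomorphic near $K_i$, together with the spectral mapping theorem, rather than ad hoc exactness checks.
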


We are going to apply Theorem \ref{Taylor's} to prove the following Proposition. However, a proof to this also follows from Theorem 2.8 of \cite{Bello}.

\begin{prop}\label{prop409}
	An $\A_r$-isometry $V$ is an $\A_r$-unitary if and only if $\sigma(V) \subseteq \mathbb{T} \cup r\mathbb{T}$.
\end{prop}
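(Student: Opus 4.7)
The forward implication is immediate from the definition, as the distinguished boundary of $\CA_r$ is $\mathbb{T}\cup r\mathbb{T}$. For the converse, I assume $V$ is an $\A_r$-isometry with $\sigma(V)\subseteq\mathbb{T}\cup r\mathbb{T}$. Since $0\notin\sigma(V)$, $V$ is invertible, and Lemma \ref{lem406} gives that $rV^{-1}$ is also an $\A_r$-isometry; both $V$ and $rV^{-1}$ are contractions, being restrictions of $\A_r$-unitaries, which are normal operators with spectrum in $\DC$. Setting $K_1=\sigma(V)\cap\mathbb{T}$ and $K_2=\sigma(V)\cap r\mathbb{T}$, these are disjoint compact sets. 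Applying Theorem \ref{Taylor's} to $V$ yields a closed-subspace decomposition $\mathcal{H}=\mathcal{H}_1\,\dotplus\,\mathcal{H}_2$ (a topological direct sum, not yet known to be orthogonal) with each $\mathcal{H}_i$ invariant under every operator commuting with $V$ --- in particular under $V$ and $V^{-1}$ --- and $\sigma(V|_{\mathcal{H}_i})=K_i$. If some $K_i$ is empty, the corresponding $\mathcal{H}_i$ is understood to be $\{0\}$.

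Next I identify the restrictions. Each $V|_{\mathcal{H}_i}$ is subnormal, being the restriction of the subnormal operator $V$ to an invariant subspace. Let $M_i$ denote its minimal normal extension. By Theorem \ref{Conway}, $\sigma(M_i)\subseteq\sigma(V|_{\mathcal{H}_i})=K_i$, so $M_1$ is a unitary and $r^{-1}M_2$ is a unitary. Invertibility of $V|_{\mathcal{H}_i}$ yields $M_i(\mathcal{H}_i)=\mathcal{H}_i$, and normality with spectrum on $\mathbb{T}$ (resp.\ $r\mathbb{T}$) gives $M_1^*=M_1^{-1}$ and $M_2^*=r^2M_2^{-1}$, so in both cases $M_i^*\mathcal{H}_i\subseteq\mathcal{H}_i$. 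Thus $\mathcal{H}_i$ reduces $M_i$, which by minimality of the extension forces $M_i$ to act on $\mathcal{H}_i$ itself; consequently $V|_{\mathcal{H}_1}$ is a unitary and $V|_{\mathcal{H}_2}=rU_2$ for some unitary $U_2$ on $\mathcal{H}_2$.

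Finally, I upgrade the direct sum to orthogonal. For $h_1\in\mathcal{H}_1$, $h_2\in\mathcal{H}_2$ and $\lambda\in\mathbb{C}$, expanding the contraction inequality $\|V(\lambda h_1+h_2)\|^2\leq\|\lambda h_1+h_2\|^2$ together with $\|Vh_1\|=\|h_1\|$ and $\|Vh_2\|=r\|h_2\|$ gives
\[
2\operatorname{Re}\bigl[\lambda\bigl(\langle Vh_1,Vh_2\rangle-\langle h_1,h_2\rangle\bigr)\bigr]\leq(1-r^2)\|h_2\|^2,
\]
and as the right-hand side is independent of $\lambda$, letting $|\lambda|\to\infty$ with appropriate argument forces $\langle Vh_1,Vh_2\rangle=\langle h_1,h_2\rangle$. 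The same computation applied to the contraction $rV^{-1}$, using $\|rV^{-1}h_1\|=r\|h_1\|$ and $\|rV^{-1}h_2\|=\|h_2\|$, yields $r^2\langle V^{-1}h_1,V^{-1}h_2\rangle=\langle h_1,h_2\rangle$. Substituting $V^{-1}h_j$ for $h_j$ in the first identity gives $\langle V^{-1}h_1,V^{-1}h_2\rangle=\langle h_1,h_2\rangle$, so $(1-r^2)\langle h_1,h_2\rangle=0$ and $\mathcal{H}_1\perp\mathcal{H}_2$. Hence $V=V|_{\mathcal{H}_1}\oplus V|_{\mathcal{H}_2}$ is an orthogonal direct sum of a unitary and $r$ times a unitary, a normal operator with spectrum in $\mathbb{T}\cup r\mathbb{T}$, i.e., an $\A_r$-unitary. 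The main obstacle is precisely this orthogonality step: Theorem \ref{Taylor's} delivers only a topological direct sum, and promoting it to an orthogonal decomposition rests on exploiting simultaneously that both $V$ and $rV^{-1}$ are contractions.
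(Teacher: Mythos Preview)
Your proof is correct and follows essentially the same route as the paper's: apply Taylor's Riesz decomposition to split $\mathcal{H}$ according to $\sigma(V)\cap\mathbb{T}$ and $\sigma(V)\cap r\mathbb{T}$, then use minimal normal extensions (via Theorem~\ref{Conway}) to identify the two restrictions as a unitary and an $r$-times unitary. The one point where you go beyond the paper is the orthogonality of $\mathcal{H}_1$ and $\mathcal{H}_2$: the paper simply writes ``$\mathcal{H}_1$ and its orthogonal complement $\mathcal{H}_2$'' without justification, whereas you correctly observe that Theorem~\ref{Taylor's} yields only a topological direct sum and then supply a clean argument exploiting the contractivity of both $V$ and $rV^{-1}$ to force $\langle h_1,h_2\rangle=0$; this extra step is valid (just note that in the second application one places the scalar $\lambda$ on $h_2$, the vector in the ``isometric'' piece for $rV^{-1}$, so that the quadratic terms again cancel).
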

\begin{proof} 
If $V$ is an $\A_r$-unitary, then it follows trivially that $\sigma(V) \subseteq \T \cup r\T$. Conversely, let $V$ be an $\A_r$-isometry acting on a Hilbert space $\mathcal{H}$  such that $\sigma(V) \subseteq \mathbb{T} \cup r\mathbb{T}$. To prove that $V$ is an $\mathbb{A}_r$-unitary, it suffices to show that $V$ is a normal operator. Let $K_1=\sigma(V) \cap \T$ and $K_2=\sigma(V)\cap r\T$. It follows from Theorem \ref{Taylor's} that there exist closed subspaces $\mathcal{H}_{1}$ and $\mathcal{H}_{2}$ of $\mathcal{H}$ that are invariant under $V$ such that
	$
	\mathcal{H}= \mathcal{H}_{1} \oplus \mathcal{H}_{2}
	$
	and for $V_1=V|_{\mathcal{H}_1}, V_2=V|_{\mathcal{H}_2}$, we have $\sigma(V_1)=K_1 \subseteq \mathbb{T}$ and $\sigma(V_2)=K_2 \subseteq r\mathbb{T}$. Since $\mathcal{H}_1$ and its orthogonal complement $\mathcal{H}_2$ are both invariant under $V$, the spaces $\mathcal{H}_1, \mathcal{H}_2$ reduce $V$.
 	 Observe that $V_1$ and $V_2$ are $\mathbb{A}_r$-isometries on $\mathcal{H}_1$ and $\mathcal{H}_2$ respectively. Let $N_1$ and $N_2$ be the minimal normal extension of $V_1$ and $V_2$ respectively. It follows from Theorem \ref{Conway} that
	 \[
	 \sigma(N_1) \subseteq \sigma(V_1) \subseteq \T \quad \text{and} \quad  \sigma(N_2) \subseteq \sigma(V_2) \subseteq r\T.
	 \] 
	 Consequently, both $N_1$ and $r^{-1}N_2$ are unitaries and so, $V_1$ and $r^{-1}V_2$ are isometries. It follows from the invertibility of $V_1$ and $V_2$ that $V_1$ and $r^{-1}V_2$ are unitaries on $\mathcal{H}_1$ and $\mathcal{H}_2$ respectively. Therefore, $V_1$ and $V_2$ are normal operators and so, $V$ is normal because we can rewrite \[
	 V=\begin{bmatrix}
	 	V_1 & 0 \\
	 	0 & V_2
	 \end{bmatrix}
 \]
   with respect to the decomposition $\mathcal{H}= \mathcal{H}_{1} \oplus \mathcal{H}_{2}$. The proof is now complete. 
\end{proof}
	 Now we produce an example of an $\mathbb{A}_r$-isometry which is not an $\mathbb{A}_r$-unitary. To do this we need to refer a few facts from the literature, e.g. \cite{SarasonI, SarasonII} and the references therein.
	\begin{eg}\label{eg1}
		We got the idea of this example from the seminal work of Sarason \cite{SarasonI}. Consider the space,
		$
		L^2(\partial\mathbb{A}_r)=L^2(\mathbb{T})\oplus L^2(r\mathbb{T}),
		$
		where $L^2(\mathbb{T})$ and $L^2(r\mathbb{T})$ are endowed with normalized Lebesgue measure and the norm on this space is given by
		\begin{equation*}
			\|f\|^2=\frac{1}{2\pi}\overset{2\pi}{\underset{0}{\int}}|f(e^{it})|^2dt+\frac{1}{2\pi}\overset{2\pi}{\underset{0}{\int}}|f(re^{it})|^2dt.    
		\end{equation*}
		Define an operator $S$ on $L^2(\partial\mathbb{A}_r)$ by
		$
		S(f)(z)=zf(z).
		$
		For $f=f_1\oplus f_2 \in L^2(\mathbb{T})\oplus L^2(r\mathbb{T})$, we have 
		\begin{enumerate}
			\item $Sf=g_1 \oplus g_2$ \quad where \quad $g_1(e^{it})=e^{it}f_1(e^{it})$ \quad and  \quad $g_2(re^{it})=re^{it}f_2(re^{it})$;
			
			\vspace{0.1cm}
			
			\item $S^{-1}f=h_1 \oplus h_2$ \quad where \quad $h_1(e^{it})=e^{-it}f_1(e^{it})$ \quad and \quad  $h_2(re^{it})=r^{-1}e^{-it}f_2(re^{it})$;
			
			\vspace{0.1cm}
			
			\item $S^{*}f=k_1 \oplus k_2$ \quad where \quad  $k_1(e^{it})=e^{-it}f_1(e^{it})$ \quad and \quad  $k_2(re^{it})=re^{-it}f_2(re^{it})$.
		\end{enumerate}
	
	\vspace{0.1cm}

\noindent Let $f=f_1\oplus f_2 \in L^2(\mathbb{T})\oplus L^2(r\mathbb{T})$. Then, we have
\begin{equation*}
	\begin{split}
		S^*S(f)&=S^*(g_1 \oplus g_2)   \quad  \quad \quad \text{where} \quad  g_1(e^{it})=e^{it}f_1(e^{it}) \quad \& \quad g_2(re^{it})=re^{it}f_2(re^{it})\\
		&=k_1 \oplus k_2 \qquad \qquad \quad  \text{where} \quad k_1(e^{it})=e^{-it}g_1(e^{it}) \quad \& \quad k_2(re^{it})=re^{-it}g_2(re^{it})\\
		&=k_1 \oplus k_2 \qquad \qquad \quad  \text{where} \quad k_1(e^{it})=f_1(e^{it}) \quad \& \quad k_2(re^{-it})=r^2f_2(re^{it})\\
	\end{split}
\end{equation*}	
and 	
\begin{equation*}
	\begin{split}
		SS^*(f)&=S(k_1 \oplus k_2)   \quad  \quad \quad \text{where} \quad  k_1(e^{it})=e^{-it}f_1(e^{it}) \quad \& \quad k_2(re^{it})=re^{-it}f_2(re^{it})\\
		&=g_1 \oplus g_2 \qquad \qquad \quad  \text{where} \quad g_1(e^{it})=e^{it}k_1(e^{it}) \quad \& \quad g_2(re^{it})=re^{it}k_2(re^{it})\\
		&=g_1 \oplus g_2 \qquad \qquad \quad  \text{where} \quad g_1(e^{it})=f_1(e^{it}) \quad \& \quad g_2(re^{-it})=r^2f_2(re^{it}).\\
	\end{split}
\end{equation*}	
Hence, $SS^*=S^*S$. Moreover, we have
\[
(I-S^*S)(S^*S-r^2I)(f_1\oplus f_2)=(I-S^*S)((1-r^2)f_1 \oplus 0)=0\oplus 0=0,  
\]
for all $f \in L^2(\partial \A_r)$. It follows from Proposition \ref{prop304} that $S$ is an $\A_r$-unitary on $L^2(\partial \A_r)$. Now, we construct an $\mathbb{A}_r$-isometry by restricting $S$ to one of its invariant subspace with certain properties. Let $\mathcal{H}$ be a closed subspace of $L^2(\partial \mathbb{A}_r)$ such that $\mathcal{H}$  is invariant under $S$ and $S^{-1}$ but does not reduce $S$. 
It follows from the Corollary to Theorem 11 in \cite{SarasonI} that such a subspace exists and $\sigma(S|_{\mathcal{H}})=\CA_r$. Hence, $S|_\mathcal{H}$ is an $\mathbb{A}_r$-isometry but not an $\mathbb{A}_r$-unitary by Proposition \ref{prop409}. 

	\end{eg} 

	  The von Neumann-Wold decomposition of an isometry states that any isometry splits into a direct sum of a shift and a unitary. Our next main result is an analogue of the Wold decomposition for an $\A_r$-isometry. Before proving this, we need the following basic and useful results.
	\begin{lem}\label{lem411}
		Given a subnormal contraction $V$ acting on a Hilbert space $\mathcal{H}$, the set
		\[
		\mathcal{H}_0=\{h \in \mathcal{H}: \|V^{*n}h\|=\|h\|=\|V^nh\| \quad  \text{for} \  n=1,2, \dotsc \},
		\]
		is same as the set 
		\[
		\widetilde{\HS}_0=\{h \in \mathcal{H}: \|V^{*n}h\|=\|h\| \quad \text{for} \ n=1,2, \dotsc \}.
		\]
		Furthermore, 
		\[
		\HS_0=\overset{\infty}{\underset{k=0}{\bigcap}} Ker(I-V^kV^{*k}).
		\]
	\end{lem}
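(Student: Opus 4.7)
The plan is to first establish the set equality $\HS_0 = \widetilde{\HS}_0$ and then read off the kernel description as a corollary. The inclusion $\HS_0 \subseteq \widetilde{\HS}_0$ is immediate from the definitions, so the content lies in the reverse inclusion. For this, I would invoke the minimal normal extension $N$ of the subnormal contraction $V$ acting on some $\mathcal{K} \supseteq \HS$, so that $\HS$ is $N$-invariant, $V = N|_\HS$, and $V^* = P_\HS N^*|_\HS$. Since $\sigma(N) \subseteq \sigma(V)$ by Theorem \ref{Conway} and $N$ is normal, $\|N\| = r(N) \leq 1$, hence $N$ is a contraction satisfying the fundamental normal identity $\|Nx\| = \|N^*x\|$ for every $x \in \mathcal{K}$.

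The core step is an induction on $n$ showing that if $h \in \widetilde{\HS}_0$, then $N^{*n}h = V^{*n}h \in \HS$ for every $n \geq 0$. Granting the hypothesis up to index $n$, I would write
\[
V^{*(n+1)}h = V^*(V^{*n}h) = V^*(N^{*n}h) = P_\HS N^{*(n+1)}h,
\]
and then chain the inequalities
\[
\|h\| = \|V^{*(n+1)}h\| = \|P_\HS N^{*(n+1)}h\| \leq \|N^{*(n+1)}h\| = \|N^{n+1}h\| \leq \|h\|.
\]
Equality throughout forces $N^{*(n+1)}h \in \HS$ (advancing the induction) and simultaneously $\|N^{n+1}h\| = \|h\|$. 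Since $\HS$ is $N$-invariant and $h \in \HS$, we have $N^{n+1}h = V^{n+1}h$, giving $\|V^{n+1}h\| = \|h\|$. This completes $\widetilde{\HS}_0 \subseteq \HS_0$.

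For the kernel characterization, the plan is to note that $I - V^k V^{*k} \geq 0$ because $V^{*k}$ is a contraction, so the scalar equality $\|V^{*k}h\|^2 = \|h\|^2$ is equivalent to $\langle (I - V^k V^{*k})h, h\rangle = 0$, which by positivity is equivalent to $h \in \mathrm{Ker}(I - V^k V^{*k})$; the case $k=0$ is automatic. Combining with $\HS_0 = \widetilde{\HS}_0$ yields the stated intersection. The main obstacle is precisely the inductive step that pins $N^{*n}h$ inside $\HS$: it depends essentially on subnormality, because for a general contraction the norms $\|V^n h\|$ and $\|V^{*n}h\|$ can differ, and there is no external normal operator to force the symmetry through the projection-norm inequality.
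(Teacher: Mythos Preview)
Your argument is correct, but it takes a different route from the paper's. The paper avoids the explicit normal extension altogether: it simply observes that each power $V^n$ of a subnormal operator is again subnormal, hence hyponormal, so the inequality $\|V^n h\| \geq \|V^{*n}h\|$ holds for every $h$ and every $n$. Combined with the contraction bound $\|V^n h\| \leq \|h\|$, this immediately gives $\|V^n h\| = \|h\|$ for $h \in \widetilde{\HS}_0$, in one line. Your approach instead builds the minimal normal extension $N$ and runs an induction showing $N^{*n}h \in \HS$; this is longer but yields the extra structural fact that $\widetilde{\HS}_0$ is invariant under $N^*$ (equivalently, that on $\widetilde{\HS}_0$ the compression $V^*$ agrees with the restriction of $N^*$), which the paper neither states nor needs. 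The kernel characterization is handled identically in both, via positivity of $I - V^kV^{*k}$.
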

	
	\begin{proof}
	
Since $V$ is a subnormal operator, $V^n$ is also a subnormal operator for every positive integer $n$ and consequently, $V^n$ is a hyponormal operator. Thus $\|V^nh\| \geq \|V^{*n}h\|$ for all $n \in \mathbb{N}$ and for every $h \in \mathcal{H}$. For any $h \in \widetilde{\HS}_0$, we have
		$
		\|h\|= \|V^{*n}h\| \leq \|V^nh\| \ \ \mbox{for every $n \in \mathbb{N}$}.
		$
		Since $V$ is a contraction, we have $\|V^nh\| \leq \|h\|$. Thus, $\|V^nh\|=\|h\|$ for every $h \in \widetilde{\HS}_0$ and for all $n \in \mathbb{N}$. Consequently, $h \in \mathcal{H}_0$ and thus $\widetilde{\HS}_0 \subseteq \HS_0$.  The converse that $\HS_0 \subseteq \widetilde{\HS}_0$ is trivial. It is only left to show that $\HS_0=\overset{\infty}{\underset{k=0}{\bigcap}} Ker(I-V^kV^{*k})$. Let $x \in 	Ker(I-V^kV^{*k})$ for all $k \geq 0$. Then $V^kV^{*k}x=x$ and so, $\|V^{*k}x\|=\|x\|$ for all $k \geq 0$. Thus $\overset{\infty}{\underset{k=0}{\bigcap}} Ker(I-V^kV^{*k}) \subseteq \HS_0$. To see the converse, let $x \in \HS_0$ and $k \geq 0$. Then $\|V^{*k}x\|=\|x\|$ and so, 
\[
0=\|x\|^2-\|V^{*k}x\|^2=\langle (I-V^kV^{*k})x, x \rangle=\|(I-V^kV^{*k})^{1\slash 2} x\|^2.
\]
Thus, $(I-V^kV^{*k})x=0$ and consequently $\HS_0 \subseteq \overset{\infty}{\underset{k=0}{\bigcap}} Ker(I-V^kV^{*k})$. The proof is now complete.
	\end{proof}
	\begin{lem}\label{lem412}
		Let $V$ be an invertible operator on a Hilbert space $\mathcal{H}$ and let $\mathcal{L} \subseteq \HS$ be a closed reducing subspace of $V$. Then the following hold: 
		\begin{enumerate}
			\item[(a)] $V|_\mathcal{L}$ and $V^*|_\mathcal{L}$ are invertible operators on $\mathcal{L}$;
			\item[(b)] $\mathcal{L}$ is a reducing subspace for $V^{-1}$.
		\end{enumerate} 
	\end{lem}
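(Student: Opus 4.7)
The plan is to exploit the block-diagonal form of $V$ induced by the reducing subspace $\mathcal{L}$, and then show that the inverse of $V$ respects this decomposition. Since $\mathcal{L}$ reduces $V$, both $\mathcal{L}$ and $\mathcal{L}^\perp$ are invariant under $V$, so with respect to $\mathcal{H}=\mathcal{L}\oplus \mathcal{L}^\perp$ we may write $V=V_1\oplus V_2$ where $V_1=V|_{\mathcal{L}}$ and $V_2=V|_{\mathcal{L}^\perp}$. The whole lemma will follow once we prove the single statement that $V^{-1}$ maps $\mathcal{L}$ into $\mathcal{L}$ (and likewise $\mathcal{L}^\perp$ into $\mathcal{L}^\perp$).

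For that single statement, I would argue as follows. Pick $y\in \mathcal{L}$ and decompose $V^{-1}y = x_1+x_2$ with $x_1\in\mathcal{L}$, $x_2\in\mathcal{L}^\perp$. Applying $V$ gives $y = Vx_1 + Vx_2$, where $Vx_1\in \mathcal{L}$ and $Vx_2\in \mathcal{L}^\perp$ by the invariance of both subspaces. Comparing this with the unique decomposition $y = y+0$ of $y\in \mathcal{L}$, one concludes $Vx_2=0$; since $V$ is invertible, $x_2=0$, so $V^{-1}y\in \mathcal{L}$. The identical argument shows $V^{-1}(\mathcal{L}^\perp)\subseteq \mathcal{L}^\perp$, which immediately yields (b): $\mathcal{L}$ is invariant under $V^{-1}$ and so is $\mathcal{L}^\perp$, meaning $\mathcal{L}$ reduces $V^{-1}$.

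For (a), once we know $V^{-1}\mathcal{L}\subseteq\mathcal{L}$, the restriction $V^{-1}|_{\mathcal{L}}$ is a two-sided inverse for $V_1=V|_{\mathcal{L}}$: indeed, for every $x\in\mathcal{L}$ we have $V^{-1}|_{\mathcal{L}}\,V_1 x = V^{-1}Vx = x$ and $V_1\,V^{-1}|_{\mathcal{L}}\,x = VV^{-1}x = x$. For the statement on $V^*|_{\mathcal{L}}$, note that a subspace reduces $V$ iff it reduces $V^*$ (the defining condition is symmetric in $V$ and $V^*$), and that $V^*$ is invertible since $V$ is. Hence the same argument applied to $V^*$ in place of $V$ shows $V^*|_{\mathcal{L}}$ is invertible on $\mathcal{L}$.

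I do not anticipate any serious obstacle; the result is essentially a bookkeeping argument that invertibility passes to reducing subspaces. The only point requiring minor care is the uniqueness-of-decomposition step that forces $Vx_2=0$, and the observation that the roles of $\mathcal{L}$ and $\mathcal{L}^\perp$ are interchangeable so that one short argument delivers both the invariance of $\mathcal{L}$ under $V^{-1}$ and the reducing property.
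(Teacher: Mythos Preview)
Your proof is correct and follows exactly the approach the paper takes: the paper simply notes that $V$ admits the block decomposition $V=V_1\oplus V_2$ with respect to $\mathcal{H}=\mathcal{L}\oplus(\mathcal{H}\ominus\mathcal{L})$ and declares the result obvious. You have merely (and carefully) spelled out the details that the paper omits.
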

	\begin{proof}
		This is obvious as $V$ admits a decomposition $V= V_1 \oplus V_2$ with respect to the orthogonal decomposition $\HS = \mathcal L \oplus (\HS \ominus \mathcal L)$.
	\end{proof}
	
 An isometry $V$ on $\HS$ is called \textit{pure} if it does not have a unitary part in its Wold decomposition, i.e. there is no non-zero closed linear subspace $\HS_1$ of $\HS$ such that $\HS_1$ reduces $V$ and $V|_{\HS_1}$ is a unitary. In an analogous way we define a {pure} $\A_r$-{isometry}.
 
 \begin{defn}
	An $\A_r$-isometry $V$ acting on a Hilbert space $\mathcal{H}$ is said to be 
	a \textit{pure $\mathbb{A}_r$-isometry} if there is no non-zero closed linear subspace $\mathcal H_1$ of $\mathcal{H}$ that reduces $V$ and on which $V$ acts as an $\mathbb{A}_r$-unitary.
	
\end{defn}
 
We now present a main result of this Section.	
	\begin{thm}\label{Wold decomposition} \textit{\textbf{(Wold decomposition for an $\A_r$-isometry)}}
		For every $\mathbb{A}_r$-isometry $V$ on a Hilbert space $\mathcal{H},$ there exists a unique orthogonal decomposition of $\mathcal{H}$ into closed reducing subspaces of $V$, say $\mathcal{H}= \mathcal{H}_{a} \oplus \mathcal{H}_{p},$ such that $V|_{\mathcal{H}_{a}}$ is an $\mathbb{A}_r$-unitary and $V|_{\mathcal{H}_{p}}$ is a pure $\mathbb{A}_r$-isometry; $\mathcal{H}_{a}$ or $\mathcal{H}_{p}$ may equal the trivial subspace $\{0\}$. Moreover, the spaces can be formulated as $\mathcal{H}_a=\mathcal{H}_u \oplus \mathcal{H}_r$ and $\mathcal{H}_p=\mathcal{H}_{p1} \cap \mathcal{H}_{p2}\,$, where
	
		\begin{minipage}[t]{0.5\textwidth}
		\smallskip
		\begin{equation*}
			\begin{split}
				&  \mathcal{H}_u
				=\overset{\infty}{\underset{k=0}{\bigcap}} Ker(I-V^kV^{*k}),\\
				& \mathcal{H}_{p1}  
				= \overset{\infty}{\underset{k=0}{\bigvee}}\ Ran(I-V^kV^{*k}),\\
			\end{split}
		\end{equation*}	
	\end{minipage}	
	\begin{minipage}[t]{0.5\textwidth}
		\smallskip 
		\begin{equation*}
			\begin{split}
				& \mathcal{H}_r 
				=\overset{\infty}{\underset{k=0}{\bigcap}} Ker\left(I-(rV^{-1})^k(rV^{-1})^{*k}\right),\\
				&  \mathcal{H}_{p2} =\overset{\infty}{\underset{k=0}{\bigvee}}\ Ran(I-(rV^{-1})^{k}(rV^{-1})^{*k}).
					\end{split}
		\end{equation*}	
	\end{minipage}
	\end{thm}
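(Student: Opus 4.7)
The plan is to construct $\mathcal{H}_a$ as the orthogonal sum of two maximal reducing subspaces of $V$---one on which $V$ acts as a unitary and another on which $rV^{-1}$ acts as a unitary---and take $\mathcal{H}_p$ to be its orthogonal complement. The key preliminary observation is that $V$ is a subnormal contraction: subnormality is built into the definition through the $\A_r$-unitary extension $U$, and $\|V\|\leq\|U\|=r(U)\leq 1$ follows from the normality of $U$ together with $\sigma(U)\subseteq\T\cup r\T$. By Lemma \ref{lem406}, $rV^{-1}$ is also an $\A_r$-isometry, hence a subnormal contraction as well.

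Next, define $\mathcal{H}_u$ and $\mathcal{H}_r$ by the formulas in the statement. Lemma \ref{lem411} applied to the subnormal contraction $V$ rewrites $\mathcal{H}_u$ as $\{h:\|V^nh\|=\|h\|=\|V^{*n}h\|\text{ for all }n\geq 1\}$, which by the canonical decomposition (Theorem \ref{thm104}) is a reducing subspace of $V$ on which $V$ acts as a unitary. The same pair of results applied to $rV^{-1}$ makes $\mathcal{H}_r$ a reducing subspace of $rV^{-1}$ on which $rV^{-1}$ is a unitary, and Lemma \ref{lem412} transfers this reducing property to $V$; consequently $V|_{\mathcal{H}_r}$ is $r$ times a unitary.

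To see $\mathcal{H}_u\perp\mathcal{H}_r$, note that $V^*V|_{\mathcal{H}_u}=I$ while $V^*V|_{\mathcal{H}_r}=r^2 I$, so for $h\in\mathcal{H}_u$ and $g\in\mathcal{H}_r$ the self-adjointness of $V^*V$ gives
\[
\langle h,g\rangle=\langle V^*Vh,g\rangle=\langle h,V^*Vg\rangle=r^2\langle h,g\rangle,
\]
forcing $\langle h,g\rangle=0$ since $r<1$. Setting $\mathcal{H}_a=\mathcal{H}_u\oplus\mathcal{H}_r$, the operator $V|_{\mathcal{H}_a}$ is block-diagonal normal with $\sigma(V|_{\mathcal{H}_a})\subseteq\T\cup r\T=b\A_r$, hence an $\A_r$-unitary. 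Then $\mathcal{H}_p:=\mathcal{H}_a^\perp$ reduces $V$, is invariant under the $\A_r$-unitary extension $U$ of $V$, and $\sigma(V|_{\mathcal{H}_p})\subseteq\sigma(V)\subseteq\CA_r$, so $V|_{\mathcal{H}_p}$ is itself an $\A_r$-isometry.

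The remaining assertions all flow from maximality. For purity: any non-zero reducing subspace $\mathcal{M}\subseteq\mathcal{H}_p$ on which $V$ acts as an $\A_r$-unitary would split by Theorem \ref{thm302} into a unitary summand (which must lie in $\mathcal{H}_u$) and an $r$-times unitary summand (which must lie in $\mathcal{H}_r$), contradicting $\mathcal{M}\subseteq\mathcal{H}_a^\perp$. Uniqueness is automatic since $\mathcal{H}_a$ is characterized as the maximal $\A_r$-unitary reducing subspace. Finally, $Ker(A)^\perp=\overline{Ran(A)}$ for the self-adjoint operators $A=I-V^kV^{*k}$ and their inverse analogues yields $\mathcal{H}_u^\perp=\mathcal{H}_{p1}$ and $\mathcal{H}_r^\perp=\mathcal{H}_{p2}$, so $\mathcal{H}_p=\mathcal{H}_u^\perp\cap\mathcal{H}_r^\perp=\mathcal{H}_{p1}\cap\mathcal{H}_{p2}$. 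The step I expect to be most delicate is the identification of $\mathcal{H}_u$ as a reducing subspace on which $V$ is unitary: the given description involves $V^kV^{*k}$ rather than the more standard $V^{*k}V^k$, and it is precisely the subnormality of $V$ (via Lemma \ref{lem411}) that bridges the two.
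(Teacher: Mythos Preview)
Your proof is correct and follows essentially the same route as the paper's: build $\mathcal{H}_u$ and $\mathcal{H}_r$ via Lemma~\ref{lem411} and Theorem~\ref{thm104}, pass the reducing property through Lemma~\ref{lem412}, set $\mathcal{H}_a=\mathcal{H}_u\oplus\mathcal{H}_r$, and obtain purity and uniqueness from maximality via Theorem~\ref{thm302}. The only cosmetic differences are that the paper starts from the norm description of $\mathcal{H}_u,\mathcal{H}_r$ and derives the kernel formulas at the end (you go the other way), and that the paper verifies $\mathcal{H}_u\cap\mathcal{H}_r=\{0\}$ by a norm computation whereas your eigenspace argument for $V^*V$ gives orthogonality directly---arguably a cleaner justification of the $\oplus$.
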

	\begin{proof}
	Let us consider the following closed subspaces of $\mathcal{H}$:
		\begin{align*}
			\mathcal{H}_u
			&=\{h \in \mathcal{H}: \|V^nh\|=\|h\|=\|V^{*n}h\|, \ \ n= 0, 1, 2, \dotsc\},\\
			\mathcal{H}_r 
			&=\{h \in \mathcal{H}: \|(rV^{-1})^nh\|=\|h\|=\|(rV^{-1})^{*n}h\|, \ \ n= 0, 1,2,\dotsc\}.
		\end{align*}
		It follows from Lemma \ref{lem406} that $rV^{-1}$ is an $\mathbb{A}_r$-isometry too. Consequently, $V$ and $rV^{-1}$ are contractions and so, Theorem \ref{thm104} yields that $\mathcal{H}_u$ and $\mathcal{H}_r$ reduce $V$ and $rV^{-1}$ respectively to unitaries. It follows from Lemma \ref{lem412} that $\mathcal{H}_r$ also reduces $V$. We now show that $\mathcal{H}_u \cap \mathcal{H}_r=\{0\}$. Let $x \in \mathcal{H}_u \cap \mathcal{H}_r$. Then, $Vx \in \mathcal{H}_r$ and so, $\|rV^{-1}Vx\|=\|Vx\|$. Since $x \in \mathcal{H}_u$, we have $\|Vx\|=\|x\|$. Consequently, 
	\[
	\|x\|=\|Vx\|=\|rV^{-1}Vx\|=r\|x\|\,,
	\]
 which is possible if and only if $x=0$. Let us define 
	$
	\mathcal{H}_a=\mathcal{H}_u \oplus \mathcal{H}_r$ and $\mathcal{H}_p=\mathcal{H}\ominus \mathcal{H}_a$. Let $\mathcal{L} \subseteq \HS$ be closed reducing subspace of $V$ such that $V|_\mathcal{L}$ is an $\A_r$-unitary. It follows from Theorem \ref{thm302} that there is a unique orthogonal decomposition $\mathcal{L}=\mathcal{L}_u\oplus \mathcal{L}_r$ such that $\mathcal{L}_u,\mathcal{L}_r$ reduce $V$ and $V|_{\mathcal{L}_u}, \,rV^{-1}|_{\mathcal{L}_r}$ are unitaries. It follows from the definition of $\mathcal{H}_u$ and $\mathcal{H}_r$ that 
$
\mathcal{L}_u \subseteq \mathcal{H}_u, \, \mathcal{L}_r \subseteq \mathcal{H}_r$ and so $\mathcal{L} \subseteq \mathcal{H}_a$. This shows that $\mathcal{H}_a$ is the largest closed subspace of $\mathcal{H}$ that reduces $V$ to an $\A_r$-unitary. If there is a subspace $\mathcal{L} \subseteq \mathcal{H}_p$ such that $\mathcal{L}$ reduces $V$ to an $\A_r$-unitary, then $\mathcal{L} \subseteq \mathcal{H}_a \cap \mathcal{H}_p=\{0\}$. Hence, $V|_{\mathcal{H}_p}$ is a pure $\A_r$-isometry. Furthermore, Lemma \ref{lem411} yields that 
		\begin{align*}
			\mathcal{H}_u
			&=\{h \in \mathcal{H}: \|V^{*n}h\|=\|h\|,  \ \ n =1,2,\dotsc\},\\
						\mathcal{H}_r 
			&=\{h \in \mathcal{H}: \|(rV^{-1})^{*n}h\|=\|h\|, \ \ n =1,2,\dotsc\},
		\end{align*}
		which can be rewritten as 
		\begin{align*}
			\mathcal{H}_u
			=\overset{\infty}{\underset{k=0}{\bigcap}} Ker(I-V^kV^{*k}) \quad \mbox{and} \quad 
			\mathcal{H}_r 
			=\overset{\infty}{\underset{k=0}{\bigcap}} Ker\left(I-(rV^{-1})^k(rV^{-1})^{*k}\right).
		\end{align*}
	Consequently, we have	
		\begin{align*}
			\begin{split}
				\mathcal{H}_u^{\perp}=\bigg[ \overset{\infty}{\underset{k=0}{\bigcap}} Ker(I-V^kV^{*k}) \bigg]^{\perp}
				=\overset{\infty}{\underset{k=0}{\bigvee}} Ran(I-V^kV^{*k}),
			\end{split}    
		\end{align*}
		and
		\begin{align*}
			\begin{split}
				\mathcal{H}_r^{\perp}=\bigg[ \overset{\infty}{\underset{k=0}{\bigcap}} Ker\left(I-(rV^{-1})^k(rV^{-1})^{*k}\right)\bigg]^{\perp}
				=\overset{\infty}{\underset{k=0}{\bigvee}} Ran(I-(rV^{-1})^{k}(rV^{-1})^{*k}).
			\end{split}
		\end{align*}
The desired structure of $\mathcal{H}_p$ follows from the fact that $\mathcal{H}_p=\mathcal{H}_a^{\perp}=\mathcal{H}_u^{\perp} \cap \mathcal{H}_r^{\perp}$. To see the uniqueness, let $\mathcal{H}=\mathcal{L}_a\oplus \mathcal{L}_p$ be an orthogonal decomposition into reducing subspaces of $V$ such that $V|_{\mathcal{L}_a}$ is $\A_r$-unitary and $V|_{\mathcal{L}_p}$ is a pure $\A_r$-isometry. It follows from the maximality of $\mathcal{H}_a$ that $\mathcal{L}_a \subseteq \mathcal{H}_a$. Hence, $\mathcal{H}_a\ominus \mathcal{L}_a$ is a closed subspace that reduces $V$ to an $\A_r$-unitary. Since 
		$
		\mathcal{H}_a\ominus \mathcal{L}_a \subseteq \mathcal{H}\ominus \mathcal{L}_a = \mathcal{L}_p
		$
	and $V|_{\mathcal{L}_p}$ is a pure $\A_r$-isometry, we have that $\mathcal{H}_a\ominus \mathcal{L}_a=\{0\}$. Therefore, $\mathcal{H}_a= \mathcal{L}_a$ and so, $\mathcal{H}_p=\mathcal{L}_p$. The proof is now complete. 
	\end{proof}
	  We now present an example of a pure $\A_r$-isometry using Example \ref{eg1} and the source of our idea is Sarason's seminal works \cite{SarasonI, SarasonII}. 
\begin{eg}\label{eg2}(\textit{Pure $\mathbb{A}_r$-isometry}) 
	Let $S$ be an operator on $L^2(\partial\mathbb{A}_r)$ (see Example \ref{eg1}) given by
	\[
	S(f)(z)=zf(z) \quad \text{for} \ f \in L^2(\partial\A_r).
	\]
	For every $0 \leq \alpha < 1,$ define the function $w_\alpha$ on $\partial \mathbb{A}_r$ as
	$
		w_\alpha(e^{it})=e^{i\alpha t}$ and $  w_\alpha(re^{it})=r^\alpha e^{i\alpha t},$ for $0 \leq t < 2\pi$. Then, it follows that $Sw_\alpha=w_{\alpha+1}$ and the functions $\{w_{\alpha+n}\}_{n \in \Z}$ are mutually orthogonal in $L^2(\partial \mathbb{A}_r)$. Define 
	\[
	H_{\alpha}^2(\partial \mathbb{A}_r)=\overline{\text{span}}\{w_{\alpha+n} \ : \ n \in \Z\}.
	\] 
	Evidently $H_{\alpha}^2(\partial \mathbb{A}_r)$ is a closed subspace of $L^2(\partial \A_r)$ and is invariant under $S, S^{-1}$ but not $S^*$. Indeed, $S^{-1}w_\alpha=w_{\alpha-1}$. It follows from Theorem \ref{thm404} that $S|_{H^2_\alpha(\partial \mathbb{A}_r)}$ is an $\A_r$-isometry on $H^2_\alpha(\partial \mathbb{A}_r)$. By Theorem \ref{Wold decomposition}, there are closed reducing subspaces $\mathcal{H}_u, \mathcal{H}_r$ and $\mathcal{H}_p$ of $H^2_\alpha(\partial \mathbb{A}_r)$ with
	\[
	H^2_\alpha(\partial \mathbb{A}_r)=\mathcal{H}_u \oplus \mathcal{H}_{r} \oplus \mathcal{H}_p
	\]
	such that $S|_{\mathcal{H}_u}, rS^{-1}|_{\mathcal{H}_{r}}$ are unitaries and $S|_{\mathcal{H}_p}$ is a pure isometry, where, 
	\[
	\mathcal{H}_u=\{f \in H^2_\alpha(\partial \mathbb{A}_r) : \|S^nf\|=\|f\|=\|S^{*n}f\| \quad \text{for} \ \ n= 0, 1,  \dotsc\}
	\]
	and
	\[
	\mathcal{H}_{r}=\{f \in H^2_\alpha(\partial \mathbb{A}_r) : \|(rS^{-1})^nf\|=\|f\|=\|(rS^{-1})^{*n}f\| \quad \text{for} \ \ n= 0, 1,  \dotsc\}.
	\]	
	Let $w_{\alpha+n} \in \mathcal{H}_u$ be arbitrary. Then $\|Sw_{\alpha+n}\|=\|w_{\alpha+n}\|$ and 
	\begin{equation*}
		\begin{split}
			\|w_{\alpha+n}\|^2&=\frac{1}{2\pi}\overset{2\pi}{\underset{0}{\int}}|w_{\alpha+n}(e^{it})|^2dt+\frac{1}{2\pi}\overset{2\pi}{\underset{0}{\int}}|w_{\alpha+n}(re^{it})|^2dt\\
			&=\frac{1}{2\pi}\overset{2\pi}{\underset{0}{\int}}|e^{it(\alpha+n)}|^2dt+\frac{1}{2\pi}\overset{2\pi}{\underset{0}{\int}}|r^{\alpha+n}e^{it(\alpha+n)}|^2dt\\
			&=\frac{1}{2\pi}\overset{2\pi}{\underset{0}{\int}}dt+\frac{1}{2\pi}\overset{2\pi}{\underset{0}{\int}}r^{2(\alpha+n)}dt\\
			&=1+r^{2(\alpha+n)}.\\
		\end{split}
	\end{equation*}
Hence, for every $n \in \Z$, we have
	\[
\|w_{\alpha+n}\|^2-  \|w_{\alpha+n+1}\|^2=r^{2(\alpha+n)}(1-r^2) \quad \text{and} \quad \|w_{\alpha+n}\|^2-  r^2\|w_{\alpha+n-1}\|^2=1-r^2. 
	\]
Take any $f$ in $H_{\alpha}^2(\partial \mathbb{A}_r)$. Then there is some $\{f_n\}_{n \in \Z} \subset \C$ such that $	f=\underset{n \in \Z}{\sum}f_nw_{\alpha+n}$. Consequently,
\[
		\|f\|^2-\|Sf\|^2=\underset{n \in \Z}{\sum}(\|f_nw_{\alpha+n}\|^2-\|f_nw_{\alpha+n+1}\|^2)
		=\underset{n \in \Z}{\sum}|f_n|^2(\|w_{\alpha+n}\|^2-\|w_{\alpha+n+1}\|^2)
\]
and 
\[
	\|f\|^2-\|rS^{-1}f\|^2=\underset{n \in \Z}{\sum}(\|f_nw_{\alpha+n}\|^2-\|rf_nw_{\alpha+n-1}\|^2)
=\underset{n \in \Z}{\sum}|f_n|^2(\|w_{\alpha+n}\|^2-r^2\|w_{\alpha+n-1}\|^2).
\]
Therefore, $\|Sf\|=\|f\|$ or $\|rS^{-1}f\|=\|f\|$ if and only if $f=0$. Hence, $\mathcal{H}_u=\mathcal{H}_r=\{0\}$ and so, $H^2_\alpha(\partial \mathbb{A}_r)=\mathcal{H}_p$. Consequently, $S|_{H^2_{\alpha}(\partial \mathbb{A}_r)}$ is a pure $\mathbb{A}_r$-isometry.
\end{eg}
 A pure isometry $V$ is unitarily equivalent to a unilateral shift with some multiplicity and $\sigma(V)=\overline{\D}$. It was proved in \cite{Bello} that the spectrum of an $\A_r$-isometry is either $\overline{\mathbb A}_r$ or $\mathbb T \cup r\mathbb T$. So, without much surprise it follows from Theorem \ref{Wold decomposition} that the spectrum of a pure $\A_r$-isometry is $\overline{\A}_r$.	
	\begin{prop}\label{prop415}
	The spectrum of a pure $\mathbb{A}_r$-isometry is $\overline{\mathbb{A}}_r$.
\end{prop}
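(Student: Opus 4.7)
The plan is to combine Theorem \ref{thm404} with Proposition \ref{prop409}, exploiting the dichotomy that the spectrum of any $\A_r$-isometry is either contained in $\mathbb{T}\cup r\mathbb{T}$ or equals all of $\overline{\A}_r$, and then ruling out the first option by purity.

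First, I would assume $V$ is a pure $\A_r$-isometry acting on a non-zero Hilbert space $\HS$ (the zero-space case being trivial, or vacuously excluded). Since $V$ is in particular an invertible operator admitting an extension to an $\A_r$-unitary, Theorem \ref{thm404} applies and gives exactly two possibilities: $\sigma(V)\subseteq \mathbb{T}\cup r\mathbb{T}$, or $\sigma(V)=\overline{\A}_r$. The goal is to discard the first alternative.

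Suppose, toward a contradiction, that $\sigma(V)\subseteq \mathbb{T}\cup r\mathbb{T}$. Then by Proposition \ref{prop409}, $V$ itself is an $\A_r$-unitary on $\HS$. But then $\HS$ is a non-zero closed reducing subspace of $\HS$ (namely $\HS$ itself) on which $V$ acts as an $\A_r$-unitary, which directly contradicts the definition of a pure $\A_r$-isometry. Hence this case cannot occur, and we are forced to conclude $\sigma(V)=\overline{\A}_r$.

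There is no real obstacle here; the argument is essentially a one-line application of the dichotomy from Theorem \ref{thm404} together with the spectral characterization of $\A_r$-unitaries among $\A_r$-isometries from Proposition \ref{prop409}. The only minor point worth stating explicitly in the writeup is the convention that a \emph{pure} $\A_r$-isometry is assumed to live on a non-zero Hilbert space, so that ``$V$ is itself an $\A_r$-unitary on $\HS$'' indeed witnesses a non-trivial reducing subspace and contradicts purity.
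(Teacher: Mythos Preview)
Your proposal is correct and matches the paper's approach: the paper also derives this proposition directly from the spectral dichotomy of Theorem \ref{thm404} together with the observation (Proposition \ref{prop409}) that an $\A_r$-isometry with spectrum in $\mathbb{T}\cup r\mathbb{T}$ is already an $\A_r$-unitary, which is incompatible with purity on a non-zero space.
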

	
Our next theorem is another main  result of this Section.	
\begin{thm}\label{thm416}
	Let $\underline{V}=(V_1, \dotsc, V_n)$ be a tuple of commuting operators acting on a Hilbert space $\mathcal{H}$. Then $\underline{V}$ is an $\A_r^n$-isometry if and only each $V_i$ is an $\A_r$-isometry.
\end{thm}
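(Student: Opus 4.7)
The plan is to prove both implications separately. The forward direction reduces quickly to Proposition \ref{Polyannulus-each annulus} together with the projection property of the Taylor joint spectrum, while the backward direction is the substantive half and requires constructing a simultaneous $\A_r^n$-unitary extension by exploiting the algebraic identity of Theorem \ref{thm407}.

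For the forward direction, suppose $\underline V$ is an $\A_r^n$-isometry, so there is an $\A_r^n$-unitary $\underline U=(U_1,\dotsc,U_n)$ on some $\mathcal{K}\supseteq\HS$ with $\HS$ jointly $\underline U$-invariant and $U_i|_\HS=V_i$ for each $i$. Proposition \ref{Polyannulus-each annulus} says that each $U_i$ is an $\A_r$-unitary, and the projection property of the Taylor spectrum yields $\sigma(V_i)\subseteq\CA_r$; hence each $V_i$ is the restriction of an $\A_r$-unitary to an invariant subspace with spectrum in $\CA_r$, i.e.\ an $\A_r$-isometry.

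For the backward direction, assume each $V_i$ is an $\A_r$-isometry. By the projection property again, $\sigma_T(\underline V)\subseteq\sigma(V_1)\times\dotsb\times\sigma(V_n)\subseteq\CA_r^n$, so it remains only to construct a joint $\A_r^n$-unitary extension. By Theorem \ref{thm407}, each $V_i$ is invertible with $V_i^*V_i+r^2(V_i^{-1})^*V_i^{-1}=(1+r^2)I$. Setting $a_i=V_i/\sqrt{n(1+r^2)}$ and $b_i=rV_i^{-1}/\sqrt{n(1+r^2)}$ produces a commuting $2n$-tuple $(a_1,b_1,\dotsc,a_n,b_n)$ with $\sum_i(a_i^*a_i+b_i^*b_i)=I$, so Lemma \ref{AthavaleIII} yields a joint normal extension $(A_1,B_1,\dotsc,A_n,B_n)$ on some $\mathcal{K}\supseteq\HS$. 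Rescaling back to $N_i=\sqrt{n(1+r^2)}\,A_i$ and $M_i=\sqrt{n(1+r^2)}\,B_i$ gives a commuting family of normal operators extending $(V_1,rV_1^{-1},\dotsc,V_n,rV_n^{-1})$, which by Fuglede's theorem doubly commute.

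It remains to show each $N_i$ is an $\A_r$-unitary. On the minimal joint normal extension space $\mathcal{K}_0$, spanned by products $N_{j_1}^{*k_1}M_{j_1}^{*l_1}\dotsm h$ with $h\in\HS$, both identities $N_iM_i=rI$ and $N_i^*N_i+M_i^*M_i=(1+r^2)I$ hold on $\HS$ (the first from $V_i\cdot rV_i^{-1}=rI$, the second from Theorem \ref{thm407}). The two defect operators lie in the commutative $*$-algebra generated by the $N_j$ and $M_j$, so they commute with every adjoint string used to build $\mathcal{K}_0$, and a sliding argument propagates the identities from $\HS$ to all of $\mathcal{K}_0$. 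Since $N_iM_i=rI$ forces $N_i$ to be right-invertible and hence invertible (normality turns right-invertibility into invertibility) with $M_i=rN_i^{-1}$, the second identity becomes $N_i^*N_i+r^2(N_i^*N_i)^{-1}=(1+r^2)I$, equivalently $(N_i^*N_i-I)(N_i^*N_i-r^2I)=0$. Proposition \ref{prop304} then identifies each $N_i$ as an $\A_r$-unitary, and Proposition \ref{Polyannulus-each annulus} gives that $(N_1,\dotsc,N_n)$ is an $\A_r^n$-unitary extending $\underline V$; combined with $\sigma_T(\underline V)\subseteq\CA_r^n$, this completes the proof.

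The main obstacle is the transfer of the two identities from $\HS$ to the minimal joint normal extension $\mathcal{K}_0$: Fuglede's double-commutation is essential for sliding the defect operators past the adjoint strings, and the multiplicative relation $N_iM_i=rI$ (not just the additive one) is what converts the extended constraint into the polynomial equation characterizing $\A_r$-unitarity via Proposition \ref{prop304}.
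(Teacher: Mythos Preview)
Your forward direction matches the paper's exactly. The backward direction is correct in outline but differs from the paper's route and has one step that needs more care.

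The paper, after obtaining joint subnormality from Lemma \ref{AthavaleIII}, passes to the minimal normal extension $(N_1,\dotsc,N_n)$ on $\mathcal K_0$ and then invokes Lubin's result (Lemma \ref{Lubin}): each $N_i|_{\mathcal K_0}$ is unitarily equivalent to the minimal normal extension of $V_i$, which is already known to be an $\A_r$-unitary from the proof of Theorem \ref{thm407}. This bypasses any direct identity-propagation on $\mathcal K_0$.

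Your route avoids Lubin by carrying the inverses $rV_i^{-1}$ into the normal extension and sliding identities. The multiplicative identity $N_iM_i=rI$ genuinely holds on $\HS$ (since $N_iM_ih=V_i(rV_i^{-1}h)=rh$), and your sliding argument for it is clean. The additive identity is the delicate point: saying ``$N_i^*N_i+M_i^*M_i=(1+r^2)I$ holds on $\HS$'' and citing Theorem \ref{thm407} is not enough, because $N_i^*$ need not preserve $\HS$, so a priori Theorem \ref{thm407} only gives the compressed statement $P_\HS(N_i^*N_i+M_i^*M_i)|_\HS=(1+r^2)I_\HS$, not $Dh=0$ for $D=N_i^*N_i+M_i^*M_i-(1+r^2)I$ and $h\in\HS$. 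The gap is closable: since the \emph{forward} operators $N_j,M_j$ do preserve $\HS$ and $D$ commutes with them and their adjoints, for $h,k\in\HS$ and any adjoint string $S$ one has $\langle Dh,Sk\rangle=\langle D(S^*h),k\rangle=0$ (as $S^*h\in\HS$ and $P_\HS D|_\HS=0$); such $Sk$ span $\mathcal K_0$, so $Dh=0$. Once this is in place, your remaining steps go through.

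In summary: both arguments rest on Lemma \ref{AthavaleIII} for joint subnormality; the paper's appeal to Lemma \ref{Lubin} is shorter, while your direct propagation is more self-contained but requires the extra justification above for the additive defect.
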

\begin{proof}
	If $\underline{V}=(V_1, \dotsc, V_n)$ is an $\A_r^n$-isometry, then $(V_1, \dots ,V_n)$ is the restriction of an $\A_r^n$-unitary $(U_1, \dots , U_n)$ to the joint invariant subspace $\HS$ of $U_1, \dots , U_n$. Then each $U_i$ is an $\A_r$-unitary by Proposition \ref{Polyannulus-each annulus}. Also, it follows from the projection property of Taylor joint spectrum that $\sigma(V_i) \subseteq \CA_r$ for $1 \leq i \leq n$. Therefore, it follows that each $V_i$ is an $\A_r$-isometry.
	
	\smallskip
	 Conversely, let us assume that each $V_i$ is an $\A_r$-isometry. Then it follows from the projection property of Taylor-joint spectrum that
$
\sigma_T(\underline{V}) \subseteq \sigma(V_1) \times \dotsc \sigma(V_n) \subseteq \CA_r^n.
$
By Theorem \ref{thm407}, each $V_i$ satisfies the operator identity
$
V_i^*V_i+(rV_i^{-1})^*rV_i^{-1}=(1+r^2)I.
$	
So, we have 
\[
\overset{n}{\underset{i=1}{\sum}}V_i^*V_i+\overset{n}{\underset{i=1}{\sum}}(rV_i^{-1})^*rV_i^{-1}=n(1+r^2)I.
\]
An application of Lemma \ref{AthavaleIII} yields that $\underline{V}$ is a subnormal tuple. Consequently, $\underline{V}=(V_1, \dotsc, V_n)$ admits a normal extension, say $\underline{N}=(N_1, \dotsc, N_n)$ on $\mathcal{K} \supseteq \mathcal{H}$ such that $V_i=N_i|_\mathcal{H}$ for  $i=1, \dotsc, n$. It follows from  Corollary \ref{Lubin} that each $U_i=N_i|_{\mathcal{K}_0}$ for 
\[
\mathcal{K}_0=\overline{\mbox{span}}\{N_1^{*j_1} \dotsc N_n^{*j_n}h \ | \  h \in \mathcal{H} \ \text{\&} \ j_1, \dotsc ,j_n \geq 0 \},
\]
is unitarily equivalent to the minimal normal extension of $V_i$ and hence, an $\mathbb{A}_r$-unitary. Consequently, we get that $\underline{U}=(U_1,  \dotsc, U_n)$ is a  tuple of commuting $\mathbb{A}_r$-unitaries on $\mathcal{K}_0$ such that $V_i=U_i|_\mathcal{H}$. By Proposition \ref{Polyannulus-each annulus}, the tuple $\underline{U}$ is an $\A_r^n$-unitary and so, $\underline{V}$ is an $\mathbb{A}_r^n$-isometry. The proof is complete. 
\end{proof}
	 There are many interesting results in the literature exploring the interplay between operator theory on a given domain and some  positivity conditions, e.g. see the classic \cite{Nagy}. Indeed, several classes of operator tuples are known to exhibit many special properties under certain positivity assumptions. One such important positivity condition is Brehmer's positivity (see \cite{Brehmer} or CH-I in \cite{Nagy}). A tuple of commuting contractions $(T_1, \dotsc, T_n)$ on a Hilbert space $ \mathcal{H}$ is said to satisfy \textit{Brehmer's positivity}  if
	\begin{equation}\label{Szego}
	S(u)=\underset{v \subset u}\sum(-1)^{|v|}(T^{e(v)})^*T^{e(v)} \geq 0
	\end{equation}
	for every subset $u$ of $\{1,2, \dotsc, n\}$, where 
	\[
	T^{e(v)}=T_1^{e_1(v)}\dotsc T_n^{e_n(v)} \quad \text{for}  \quad e_j(v)=\left\{
	\begin{array}{ll}
		1, & j \in v \\
		0, & j \notin v.\\
	\end{array}
	\right.	
	\]
	We shall end this Section by showing that every $\A_r^n$-isometry satisfies the Brehmer's positivity condition. To prove this, we need a couple of preparatory results of which one is due to Athavale \cite{Athavale} that provides a necessary and sufficient condition such that a commuting $n$-tuple of subnormal contractions admits a commuting normal extension. 
	\begin{lem}[\cite{Athavale}, Proposition 0]\label{Athavale} A tuple $\underline{S}=(S_1, \dotsc, S_n)$ is a subnormal tuple if and only if 
			\[
			\underset{\substack{0 \leq p_i \leq k_i\\ 1 \leq i \leq n}}{\sum}(-1)^{p_1+\dotsc+p_n}\binom{k_1}{p_1}\dotsc \binom{k_n}{p_n}S_1^{*p_1}\dotsc S_n^{*p_n}S_1^{p_1}\dotsc S_n^{p_n} \geq 0
			\]
			for any non-negative integers $k_1, \dotsc, k_n$.
	\end{lem}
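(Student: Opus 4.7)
The plan is to prove Athavale's characterization in two directions, working in the setting relevant to the paper where the commuting tuples under consideration are contractive so that any normal extension has spectrum inside the closed polydisc $\overline{\D}^n$.

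For the forward direction, suppose $\underline{S}$ admits a commuting normal extension $\underline{N}=(N_1,\dotsc,N_n)$ on some $\mathcal{K}\supseteq\mathcal{H}$, and let $E$ denote its joint spectral measure on $\C^n$. Since $\mathcal{H}$ is joint-invariant, $S_i^{p_i}h=N_i^{p_i}h$ for every $h\in\mathcal{H}$, and the joint spectral theorem yields
\[
\langle S_1^{*p_1}\dotsc S_n^{*p_n}S_1^{p_1}\dotsc S_n^{p_n}h,h\rangle=\int_{\C^n}|z_1|^{2p_1}\dotsc|z_n|^{2p_n}\,d\mu_h(z),
\]
where $\mu_h(\cdot)=\langle E(\cdot)h,h\rangle$ is a non-negative Borel measure. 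Summing over $0\le p_i\le k_i$ with the prescribed alternating signs and binomial weights converts the quadratic form into
\[
\int_{\C^n}\prod_{i=1}^n(1-|z_i|^2)^{k_i}\,d\mu_h(z),
\]
which is non-negative because the support of $\mu_h$ lies in $\overline{\D}^n$.

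For the converse, assume all the displayed positivity inequalities. The numbers $s_{\mathbf{p},\mathbf{q}}(h)=\langle S_1^{*p_1}\dotsc S_n^{*p_n}S_1^{q_1}\dotsc S_n^{q_n}h,h\rangle$ are the candidate moments of a putative joint spectral measure, and the hypothesized inequalities encode precisely the multidimensional Hausdorff moment positivity conditions. A multivariable moment theorem (of Berg--Maserick or Putinar--Vasilescu type) then produces, for each $h\in\mathcal{H}$, a positive Borel measure $\mu_h$ supported on $\overline{\D}^n$ representing $s_{\mathbf{p},\mathbf{q}}(h)$. One assembles these measures into a single dilation by endowing $\C[z_1,\bar z_1,\dotsc,z_n,\bar z_n]\otimes\mathcal{H}$ with the semi-inner product induced by the $\mu_h$'s, quotienting by the null space, and completing to a Hilbert space $\mathcal{K}\supseteq\mathcal{H}$ on which the coordinate multiplications $N_i:f\mapsto z_if$ are commuting bounded normal operators that restrict to $S_i$ on $\mathcal{H}$.

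The main obstacle is the converse direction. Producing a coherent family of representing measures $\mu_h$, identifying the correct sesquilinear form on the polynomial tensor product, and verifying that the coordinate multiplications are bounded, mutually commuting, and jointly normal on the resulting space all require delicate bookkeeping with the moment identities and a careful Naimark-type dilation argument. By contrast, the forward direction is a routine application of the joint spectral resolution, once the existence of the normal extension is granted.
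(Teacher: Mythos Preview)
The paper does not prove this lemma; it is quoted verbatim from Athavale--Pedersen \cite{Athavale} and used as a black box in the proof that $\A_r^n$-isometries satisfy Brehmer positivity. There is thus no argument in the paper to compare your attempt against.

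On the merits of your sketch: the forward direction is correct once you restrict to contractive tuples, and the spectral-integral computation is exactly the standard one. The converse is in the right spirit---the original proof does go through multidimensional Hausdorff moment theory---but one point is misstated. The hypothesis constrains only the ``diagonal'' data $\|S_1^{p_1}\dotsb S_n^{p_n}h\|^2$, not the full two-index array $s_{\mathbf{p},\mathbf{q}}(h)=\langle S^{*\mathbf{p}}S^{\mathbf{q}}h,h\rangle$ you introduce. The scalar Hausdorff theorem therefore produces, for each $h$, a measure $\nu_h$ on $[0,1]^n$ (in the variables $t_i=|z_i|^2$) with $\int t^{\mathbf{p}}\,d\nu_h=\|S^{\mathbf{p}}h\|^2$; it does not hand you a measure on $\overline{\D}^n$ representing mixed moments. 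Bridging this gap is precisely where the work lies: one needs the \emph{operator} positivity (not just the scalar quadratic forms) to build a completely positive map and then invoke a Stinespring/Naimark dilation, rather than assembling vector-by-vector scalar measures. Your GNS paragraph gestures at this but, as written, the semi-inner product on $\C[z,\bar z]\otimes\mathcal{H}$ would have to be defined from the operator inequalities directly, not from the family $\{\mu_h\}$.
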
 
\begin{lem}\label{lem418}
	Let $V$ be an $\A_r$-isometry on a Hilbert space $\mathcal{H}$. Then 
		\begin{equation*}
		\begin{split}
			\underset{ 0 \leq p \leq k}{\sum}(-1)^p\binom{k}{p}V^{*p}V^p 
			=
			\left\{
			\begin{array}{ll}
				I, & k=0 \\
				(1-r^2)^{(k-1)}(I-V^*V), & k > 0.\\
			\end{array}
			\right.	
		\end{split}
	\end{equation*} 
\end{lem}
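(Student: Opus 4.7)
The plan is to proceed by induction on $k$, leveraging the key algebraic identity for an $\mathbb A_r$-isometry from Theorem \ref{thm407}, namely
\[
V^{*2}V^2 = (1+r^2)V^*V - r^2 I.
\]
The base cases $k=0$ (trivial) and $k=1$ (direct) are immediate. For $k=2$ the identity above gives
\[
I - 2V^*V + V^{*2}V^2 = I - 2V^*V + (1+r^2)V^*V - r^2 I = (1-r^2)(I - V^*V),
\]
which matches the asserted formula with the exponent $k-1 = 1$.

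For the inductive step, denote $A_k = \sum_{p=0}^{k}(-1)^p\binom{k}{p}V^{*p}V^p$. The main move is to use Pascal's identity $\binom{k+1}{p} = \binom{k}{p} + \binom{k}{p-1}$ to derive the recurrence
\[
A_{k+1} = A_k - V^* A_k V,
\]
after re-indexing the second sum by $q = p-1$ and using $V^{*(q+1)}V^{q+1} = V^*(V^{*q}V^q)V$. Substituting the inductive hypothesis $A_k = (1-r^2)^{k-1}(I-V^*V)$ for $k\geq 1$ yields
\[
A_{k+1} = (1-r^2)^{k-1}\bigl[(I - V^*V) - V^*(I-V^*V)V\bigr] = (1-r^2)^{k-1}\bigl[I - 2V^*V + V^{*2}V^2\bigr],
\]
and applying the $\mathbb A_r$-isometry identity once more collapses the bracketed expression to $(1-r^2)(I - V^*V)$, giving $A_{k+1} = (1-r^2)^k (I - V^*V)$, as required.

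The whole argument is essentially a two-line calculation once the recurrence $A_{k+1} = A_k - V^* A_k V$ is in place, so there is no real obstacle; the only point that requires a moment of care is the bookkeeping in the Pascal-identity step, specifically verifying that the boundary terms $\binom{k}{-1}$ and $\binom{k}{k+1}$ vanish so the re-indexed sum aligns properly with $V^* A_k V$.
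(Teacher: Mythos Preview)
Your proof is correct. The recurrence $A_{k+1} = A_k - V^*A_kV$ follows cleanly from Pascal's identity, and the inductive step closes immediately once you plug in the $\mathbb{A}_r$-isometry relation $V^{*2}V^2 = (1+r^2)V^*V - r^2I$ from Theorem~\ref{thm407}.

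The paper takes a different route: it extends $V$ to an $\mathbb{A}_r$-unitary $U$ on a larger space $\mathcal{K}$, invokes the structure theorem (Theorem~\ref{thm302}) to write $U = U_1 \oplus rU_2$ with $U_1, U_2$ unitary, computes the binomial sum for $U$ directly in this diagonal form (where it reduces to the scalar identities $\sum_p(-1)^p\binom{k}{p} = 0$ and $\sum_p\binom{k}{p}(-r^2)^p = (1-r^2)^k$), and then compresses back to $\mathcal{H}$ using $P_{\mathcal{H}}U^{*p}U^p|_{\mathcal{H}} = V^{*p}V^p$. Your argument is more elementary and self-contained: it uses only the single algebraic identity of Theorem~\ref{thm407} and avoids both the extension and the structure theorem for $\mathbb{A}_r$-unitaries. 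The paper's approach, on the other hand, makes the formula transparent by reducing it to a scalar binomial computation on the two spectral circles, at the cost of invoking heavier machinery.
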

\begin{proof}
	Let $U$ be an $\mathbb{A}_r$-unitary acting on a space $\mathcal{K} \supseteq \mathcal{H}$ such that $V=U|_\mathcal{H}$. It follows from Theorem \ref{thm302} that  $\mathcal{K}$ decomposes into an orthogonal sum of reducing subspaces $\mathcal{K}=\mathcal{K}_u\oplus \mathcal{K}_r$ with repsect to which $U$ can be written as
	\[
	U=\begin{bmatrix}
		U_1 & 0\\
		0 & rU_2\\
	\end{bmatrix}
	\]
such that $U_1$ and $U_2$ are unitaries on $\mathcal{K}_u$ and $\mathcal{K}_r$ respectively. Consequently,  
	\begin{equation*}
		\begin{split}
			\underset{ 0 \leq p \leq k}{\sum}(-1)^p\binom{k}{p}U^{*p}U^p
			=
			\begin{bmatrix}
				\underset{ 0 \leq p \leq k}{\sum}(-1)^p\binom{k}{p} & 0\\
				0 &  \underset{ 0 \leq p \leq k}{\sum}\binom{k}{p}(-r^2)^p\\
			\end{bmatrix}
			&=\left\{
			\begin{array}{ll}
				\begin{bmatrix}
					I & 0\\
					0 & I\\
				\end{bmatrix}, & k=0 \\\\
				\begin{bmatrix}
					0 & 0\\
					0 & (1-r^2)^kI\\
				\end{bmatrix}, & k > 0.\\
			\end{array}
			\right. 
		\end{split}
	\end{equation*}
	Note that $I-U^*U=\begin{bmatrix}
		0 & 0\\
		0 & (1-r^2)I
	\end{bmatrix}$ and so, 
 $\begin{bmatrix}
		0 & 0\\
		0 & I
	\end{bmatrix}=(1-r^2)^{-1}(I-U^*U)$. This shows that
	\[
	\underset{ 0 \leq p \leq k}{\sum}(-1)^p\binom{k}{p}U^{*p}U^p= \left\{
	\begin{array}{ll}
		I, & k=0 \\
		(1-r^2)^{(k-1)}(I-U^*U), & k > 0.\\
	\end{array}
	\right. 
	\]
	Since $V=U|_\mathcal{H}$, we have that
	 $U=\begin{bmatrix}
		V & *\\
		0 & *\\
	\end{bmatrix}$ with respect to the decomposition $\mathcal{K}=\mathcal{H}\oplus \mathcal{H}^\perp$. Thus, for any $p \geq 0$, we have that 
	\[
	U^p=\begin{bmatrix}
		V^p & *\\
		0 & *\\
	\end{bmatrix}, \quad  
	U^{*p}=\begin{bmatrix}
		V^{*p} & 0\\
		* & *\\
	\end{bmatrix} \quad \mbox{and} \quad U^{*p}U^p=\begin{bmatrix}
		V^{*p}V^p & *\\
		* & *\\
	\end{bmatrix}.
	\]
	Consequently, we have
	\begin{equation*}
		\begin{split}
			\underset{ 0 \leq p \leq k}{\sum}(-1)^p\binom{k}{p}V^{*p}V^p 
			&=
			P_{\mathcal{H}}\left[\ \underset{ 0 \leq p \leq k}{\sum}(-1)^p\binom{k}{p}U^{*p}U^p\right]\bigg|_\mathcal{H}\\
			&=
			\left\{
			\begin{array}{ll}
				I, & k=0 \\
				(1-r^2)^{(k-1)}P_\mathcal{H}(I-U^*U)|_\mathcal{H}, & k > 0\\
			\end{array}
			\right.		\\
			&=
			\left\{
			\begin{array}{ll}
				I, & k=0 \\
				(1-r^2)^{(k-1)}(I-V^*V), & k > 0\\
			\end{array}
			\right.	
		\end{split}
	\end{equation*} 
and the proof is complete. 

\end{proof}
\begin{thm}
	Every $\A_r^n$-isometry satisfies the Brehmer's positivity condition.
\end{thm}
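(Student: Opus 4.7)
The plan is to reduce Brehmer's positivity for an $\A_r^n$-isometry to subnormality combined with Athavale's characterization (Lemma \ref{Athavale}). The key observation is that an $\A_r^n$-isometry is, by definition, the restriction of an $\A_r^n$-unitary to a joint invariant subspace, and an $\A_r^n$-unitary is a commuting tuple of normal operators. Hence every $\A_r^n$-isometry is automatically a subnormal tuple, which makes Lemma \ref{Athavale} immediately applicable.

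First, starting from an $\A_r^n$-isometry $\underline{V}=(V_1,\dotsc,V_n)$ on $\HS$, I would invoke Theorem \ref{thm416} (or directly the definition) to produce a commuting normal $n$-tuple $\underline{U}=(U_1,\dotsc,U_n)$ on some $\mathcal{K}\supseteq\HS$ with each $U_j$ an $\A_r$-unitary and $V_j=U_j|_\HS$. This certifies that $\underline{V}$ is subnormal. Applying Lemma \ref{Athavale} to $\underline{V}$, we get that
\[
\sum_{\substack{0\leq p_i\leq k_i \\ 1\leq i\leq n}}(-1)^{p_1+\dotsb+p_n}\binom{k_1}{p_1}\dotsb\binom{k_n}{p_n}V_1^{*p_1}\dotsb V_n^{*p_n}V_1^{p_1}\dotsb V_n^{p_n}\ \geq\ 0
\]
for all non-negative integers $k_1,\dotsc,k_n$.

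Second, I would specialize the multi-index $(k_1,\dotsc,k_n)$ to the indicator of the given subset $u\subseteq\{1,\dotsc,n\}$, i.e.\ set $k_i=1$ for $i\in u$ and $k_i=0$ for $i\notin u$. Then the range of summation collapses: each $p_i$ with $i\notin u$ is forced to be $0$, while each $p_i$ with $i\in u$ ranges over $\{0,1\}$, and every binomial coefficient becomes $1$. Identifying such a multi-index with the subset $v=\{i\in u:p_i=1\}\subseteq u$ yields exactly
\[
\sum_{v\subseteq u}(-1)^{|v|}(V^{e(v)})^{*}V^{e(v)}\ =\ S(u),
\]
so $S(u)\geq 0$, which is the desired Brehmer positivity.

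The proof is essentially this two-step reduction, so there is no serious obstacle; the only non-trivial point is the bookkeeping that matches Athavale's multi-index sum with Brehmer's subset sum, which is a routine combinatorial verification. If one preferred a more hands-on argument without invoking Lemma \ref{Athavale}, an alternative approach would be to use that the extension $\underline{U}$ is a commuting normal tuple, hence doubly commuting by Fuglede--Putnam, so that $S(u)$ for $\underline{U}$ factors as $\prod_{i\in u}(I-U_i^{*}U_i)$, a product of mutually commuting positive operators; positivity on $\HS$ would then follow from the invariance of $\HS$ under each $U_j$ via $\|V^{e(v)}h\|=\|U^{e(v)}h\|$ for $h\in\HS$. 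Either route closes the argument.
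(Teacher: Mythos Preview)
Your proof is correct and is in fact more direct than the paper's argument. Both approaches ultimately rely on Lemma \ref{Athavale} (Athavale's characterization of subnormality), but you apply it immediately after observing that an $\A_r^n$-isometry is, by definition, a subnormal tuple, and then simply specialize the Athavale inequality to $k_i\in\{0,1\}$ to recover $S(u)\geq 0$. The paper instead proves the stronger identity $\Delta_m^k(V_{i_1},\dotsc,V_{i_m})=(1-r^2)^{k_1+\dotsb+k_m-m}\,S(\{i_1,\dotsc,i_m\})$ for \emph{all} $k\in\mathbb{N}^m$, via Lemma \ref{lem418} and an induction on $m$, and only then invokes Lemma \ref{Athavale} to conclude positivity. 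That extra computation yields a neat closed-form relation between the Athavale defects and the Brehmer defects, specific to the $\A_r$-isometry setting, but it is not needed for the theorem as stated. Your shortcut bypasses Lemma \ref{lem418} entirely, and your alternative route through the doubly commuting normal extension (Fuglede, then $\prod_{i\in u}(I-U_i^*U_i)\geq 0$ compressed to $\HS$) gives yet another self-contained proof that avoids even Lemma \ref{Athavale}.
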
	
\begin{proof}
	Let $\underline{V}=(V_1, \dotsc, V_n)$ be an $\A_r^n$-isometry acting on a Hilbert space $\mathcal{H}$. It follows from Theorem \ref{thm416} that each $V_i$ is an $\A_r$-isometry. For $1 \leq m \leq n$ and $k=(k_1, \dotsc, k_m) \in \mathbb{N}^m$, let us define the following: 
	\[
	\Delta_m^k(V_{i_1}, \dotsc, V_{i_m}):=\underset{\substack{0 \leq p_i \leq k_i \\ 1 \leq i \leq m}}{\sum}(-1)^{p_1+\dotsc+p_m}\binom{k_1}{p_1}\dotsc \binom{k_m}{p_m}V_{i_1}^{*p_1}\dotsc V_{i_m}^{*p_m}V_{i_1}^{p_1}\dotsc V_{i_m}^{p_m}\,,
	\] 
where $\{V_{i_1}, \dotsc, V_{i_m}\} \subset \{V_1, \dotsc, V_n\}$. By Lemma \ref{lem418}, $\Delta_1^k(V_i)=(1-r^2)^{k-1}(I-V_i^*V_i)$ for $k \in \N$ and $1 \leq i \leq n$. Next, we compute the following.
	\begin{equation*}
		\begin{split}
			\Delta_2^k(V_{i_1}, V_{i_2})
			&=
			\overset{k_1}{\underset{p_1=0}{\sum}}\overset{k_2}{\underset{p_2=0}{\sum}}(-1)^{p_1+p_2}\binom{k_1}{p_1}\binom{k_2}{p_2}V_{i_2}^{*p_2}V_{i_1}^{*p_1}V_{i_1}^{p_1}V_{i_2}^{p_2}\\
			&=
			\overset{k_2}{\underset{p_2=0}{\sum}}(-1)^{p_2}\binom{k_2}{p_2}V_{i_2}^{*p_2}\bigg[\overset{k_1}{\underset{p_1=0}{\sum}}(-1)^{p_1}\binom{k_1}{p_1}V_{i_1}^{*p_1}V_{i_1}^{p_1}\bigg]V_{i_2}^{p_2}\\
			&=
			(1-r^2)^{k_1-1}	\overset{k_2}{\underset{p_2=0}{\sum}}(-1)^{p_2}\binom{k_2}{p_2}V_{i_2}^{*p_2}(I-V_{i_1}^*V_{i_1})V_{i_2}^{p_2}\\
			&=
			(1-r^2)^{k_1-1}\bigg[	\overset{k_2}{\underset{p_2=0}{\sum}}(-1)^{p_2}\binom{k_2}{p_2}V_{i_2}^{*p_2}V_{i_2}^{p_2}-	V_{i_1}^*	\bigg(\overset{k_2}{\underset{p_2=0}{\sum}}(-1)^{p_2}\binom{k_2}{p_2}V_{i_2}^{*p_2}V_{i_2}^{p_2}\bigg)V_{i_1}\bigg]\\	
			&=
			(1-r^2)^{k_1-1}(1-r^2)^{k_2-1}\bigg[(I-V_{i_2}^*V_{i_2})-	V_{i_1}^*(I-V_{i_2}^*V_{i_2})V_{i_1}\bigg]\\	
			&=
			(1-r^2)^{k_1+k_2-2}\bigg[I-V_{i_1}^*V_{i_1}-V_{i_2}^*V_{i_2}+V_{i_1}^*V_{i_2}^*V_{i_1}V_{i_2}	\bigg].\\	
		\end{split}
	\end{equation*}  
	Similarly, one can show that $(1-r^2)^{-(k_1+k_2+k_3-3)}\Delta_3^k(V_{i_1}, V_{i_2}, V_{i_3})$ equals
	\[
I-V_{i_1}^*V_{i_1}-V_{i_2}^*V_{i_2}-V_{i_3}^*V_{i_3}+V_{i_1}^*V_{i_2}^*V_{i_1}V_{i_2}+V_{i_1}^*V_{i_3}^*V_{i_1}V_{i_3}+V_{i_2}^*V_{i_3}^*V_{i_2}V_{i_3}-V_{i_1}^*V_{i_2}^*V_{i_3}^*V_{i_1}V_{i_2}V_{i_3}.
	\]
	\noindent So far, we have shown that
	\begin{enumerate}
		\item $\Delta_1^k(V_{i_1})=(1-r^2)^{k_1-1}S(\{i_1\})$;
		\item $\Delta_2^k(V_{i_1}, V_{i_2}, V_{i_3})=(1-r^2)^{k_1+k_2-2}S(\{i_1, i_2\})$;
		\item $\Delta_3^k(V_{i_1}, V_{i_2}, V_{i_3})=(1-r^2)^{k_1+k_2+k_3-3}S(\{i_1, i_2, i_3\})$,
	\end{enumerate}
	where $S(u)$ is defined as in (\ref{Szego}) for a given subset $u$ of $\{1, \dotsc, n\}$. We provide an inductive argument to show that for every $m$ with $1 \leq m \leq n$ and $k=(k_1, \dotsc, k_m) \in \mathbb{N}^m,$ we have
	\begin{equation}\label{B.P.}
		\Delta_m^k(V_{i_1}, \dotsc, V_{i_m})=(1-r^2)^{k_1+ \dotsc + k_m-m}S(\{i_1, \dotsc, i_m\}).
	\end{equation}
	We assume that (\ref{B.P.}) holds for $m-1$ and prove it for $m$. To begin with, we have
	\begin{small}
		\begin{equation}\label{Delta_m}
			\begin{split}
				& \Delta_m^k(V_{i_1}, \dotsc, V_{i_m})\\
				&= 	\underset{\substack{0 \leq p_i \leq k_i\\ 1 \leq i \leq m}}{\sum}(-1)^{p_1+\dotsc+p_m}\binom{k_1}{p_1}\dotsc \binom{k_m}{p_m}V_{i_1}^{*p_1}\dotsc V_{i_m}^{*p_m}V_{i_1}^{p_1}\dotsc V_{i_m}^{p_m}	
				\\
				&=	\underset{0 \leq p_m \leq k_m}{\sum}(-1)^{p_m}\binom{k_m}{p_m}V_{i_m}^{*p_m}\bigg[ \underset{\substack{0 \leq p_i \leq k_i\\ 1 \leq i \leq m-1}}{\sum}(-1)^{p_1+\dotsc+p_{m-1}}\binom{k_1}{p_1}\dotsc \binom{k_{m-1}}{p_{m-1}}V_{i_1}^{*p_1}\dotsc V_{i_{m-1}}^{*p_{m-1}}V_{i_1}^{p_1}\dotsc V_{i_{m-1}}^{p_{m-1}}\bigg]V_{i_m}^{p_m}	
				\\
				&=	\underset{0 \leq p_m \leq k_m}{\sum}(-1)^{p_m}\binom{k_m}{p_m}V_{i_m}^{*p_m} \Delta_{m-1}^k(V_{i_1}, \dots , V_{i_{m-1}})V_{i_m}^{p_m}	
				\\
				&=(1-r^2)^{k_1+\dotsc +k_{m-1}-m+1} \\
				& \ \	\overset{k_m}{\underset{p_m=0}{\sum}}\binom{k_m}{p_m}(-V_{i_m})^{*p_m}\bigg[ I-\overset{m-1}{\underset{\alpha=1}{\sum}}V_{i_\alpha}^*V_{i_\alpha}+\underset{1 \leq \alpha< \beta \leq m-1}{\sum}V_{i_\alpha}^*V_{i_\beta}^*V_{i_\alpha}V_{i_\beta}+ \dotsc + (-1)^{m-1}V_{i_1}^*\dotsc V_{i_{m-1}}^*V_{i_1}\dotsc V_{i_{m-1}}\bigg]V_{i_m}^{p_m}.
				\\
			\end{split}
		\end{equation}
	\end{small}
	Again by Lemma \ref{lem418}, we have 
	\[
	\underset{0 \leq p_m \leq k_m}{\sum}(-1)^{p_m}\binom{k_m}{p_m}V_{i_m}^{*p_m}V_{i_m}^{p_m}=(1-r^2)^{k_m-1}(I-V_{i_m}^*V_{i_m})
	\]
	since $k_m \neq 0$. We have that 
	\begin{enumerate}
		\item[(A$_1$)] $	\underset{0 \leq p_m \leq k_m}{\sum}(-1)^{p_m}\binom{k_m}{p_m}V_{i_m}^{*p_m}\bigg[\overset{m-1}{\underset{\alpha=1}{\sum}}V_{i_\alpha}^*V_{i_\alpha}\bigg]V_{i_m}^{p_m}$
		\begin{equation*}
			\begin{split}
				&=
				\overset{m-1}{\underset{\alpha=1}{\sum}}V_{i_\alpha}^*	\bigg[\underset{0 \leq p_m \leq k_m}{\sum}(-1)^{p_m}\binom{k_m}{p_m}V_{i_m}^{*p_m}V_{i_m}^{p_m}\bigg]V_{i_\alpha}\\
				&=
				(1-r^2)^{k_m-1}	\overset{m-1}{\underset{\alpha=1}{\sum}}V_{i_\alpha}^*	\bigg(I-V_{i_m}^*V_{i_m}\bigg)V_{i_\alpha}\\
				&=
				(1-r^2)^{k_m-1}\bigg[ 	\overset{m-1}{\underset{\alpha=1}{\sum}}V_{i_\alpha}^*V_{i_\alpha}	-\overset{m-1}{\underset{\alpha=1}{\sum}}V_{i_\alpha}^*V_{i_m}^*V_{i_\alpha}V_{i_m}\bigg].\\
			\end{split}
		\end{equation*}
		\item[(A$_2$)] $ \underset{0 \leq p_m \leq k_m}{\sum}(-1)^{p_m}\binom{k_m}{p_m}V_{i_m}^{*p_m}\bigg[\underset{1 \leq \alpha< \beta \leq m-1}{\sum}V_{i_\alpha}^*V_{i_\beta}^*V_{i_\alpha}V_{i_\beta}\bigg]V_{i_m}^{p_m}$ 
		\begin{equation*}
			\begin{split}
				&=
				\underset{1 \leq \alpha< \beta \leq m-1}{\sum}V_{i_\alpha}^*V_{i_\beta}^*	 \bigg[\underset{0 \leq p_m \leq k_m}{\sum}(-1)^{p_m}\binom{k_m}{p_m}V_{i_m}^{*p_m}V_{i_m}^{p_m}\bigg]V_{i_\alpha}V_{i_\beta}\\
				&=
				(1-r^2)^{k_m-1}
				\underset{1 \leq \alpha< \beta \leq m-1}{\sum}V_{i_\alpha}^*V_{i_\beta}^*	 \bigg(I-V_{i_m}^*V_{i_m}\bigg)V_{i_\alpha}V_{i_\beta}\\
				&=
				(1-r^2)^{k_m-1}\bigg[ 
				\underset{1 \leq \alpha< \beta \leq m-1}{\sum}V_{i_\alpha}^*V_{i_\beta}^*V_{i_\alpha}V_{i_\beta}
				-
				\underset{1 \leq \alpha< \beta \leq m-1}{\sum}V_{i_\alpha}^*V_{i_\beta}^*	 V_{i_m}^*V_{i_m}V_{i_\alpha}V_{i_\beta}
				\bigg]. \\
			\end{split}
		\end{equation*}
		\vdots 
		\\
		\vdots
		\item[(A$_{m-1}$)] $ \underset{0 \leq p_m \leq k_m}{\sum}(-1)^{p_m}\binom{k_m}{p_m}V_{i_m}^{*p_m}\bigg[V_{i_1}^*\dotsc V_{i_{m-1}}^*V_{i_1}\dotsc V_{i_{m-1}}\bigg]V_{i_m}^{p_m}$
		\begin{equation*}
			\begin{split}
				&=
				V_{i_1}^*\dotsc V_{i_{m-1}}^*\bigg[\underset{0 \leq p_m \leq k_m}{\sum}(-1)^{p_m}\binom{k_m}{p_m}V_{i_m}^{*p_m}V_{i_m}^{p_m}\bigg]V_{i_1}\dotsc V_{i_{m-1}}\\
				&=
				(1-r^2)^{k_m-1}
				V_{i_1}^*\dotsc V_{i_{m-1}}^*\bigg(I-V_{i_m}^*V_{i_m}\bigg)V_{i_1}\dotsc V_{i_{m-1}}\\
				&=		
				(1-r^2)^{k_m-1}
				\bigg[		V_{i_1}^*\dotsc V_{i_{m-1}}^*V_{i_1}\dotsc V_{i_{m-1}}
				-V_{i_1}^*\dotsc V_{i_{m-1}}^*V_{i_m}^*V_{i_1}\dotsc V_{i_{m-1}}V_{i_m}
				\bigg].		
				\\
			\end{split}
		\end{equation*}
	\end{enumerate}
	Putting the above computations in (\ref{Delta_m}), we have that 
	\[
	(1-r^2)^{-(k_1+\dotsc +k_m-m)}\Delta_m^k(V_{i_1}, \dotsc, V_{i_m})
	\]
	\begin{small} 
		\begin{equation*}
			\begin{split}
				&=\overset{k_m}{\underset{p_m=0}{\sum}}\binom{k_m}{p_m}(-V_{i_m}^*)^{p_m}\bigg[ I-\overset{m-1}{\underset{\alpha=1}{\sum}}V_{i_\alpha}^*V_{i_\alpha}+\underset{1 \leq \alpha< \beta \leq m-1}{\sum}V_{i_\alpha}^*V_{i_\beta}^*V_{i_\alpha}V_{i_\beta}+ \dotsc + (-1)^{m-1}V_{i_1}^*\dotsc V_{i_{m-1}}^*V_{i_1}\dotsc V_{i_{m-1}}\bigg]V_{i_m}^{p_m}
				\\
				&= I-\overset{m}{\underset{\alpha=1}{\sum}}V_{i_\alpha}^*V_{i_\alpha}+\underset{1 \leq \alpha< \beta \leq m}{\sum}V_{i_\alpha}^*V_{i_\beta}^*V_{i_\alpha}V_{i_\beta}+ \dotsc + (-1)^{m}V_{i_1}^*\dotsc V_{i_{m}}^*V_{i_1}\dotsc V_{i_{m}}\\
				&=S(\{i_1, \dotsc, i_m\}).	
			\end{split}
		\end{equation*}
	\end{small} 

\vspace{0.1cm}

\noindent 	Hence, we have proved that (\ref{B.P.}) holds for every $m \in \mathbb{N}$. It follows from Lemma \ref{Athavale} that $\Delta_m^k(V_{i_1}, \dotsc, V_{i_m}) \geq 0$ for every $k \in\N$ and $1 \leq m \leq n$. Thus, by (\ref{B.P.}) we have that $S(\{i_1, \dotsc, i_m\})\geq 0$ for every $\{i_1, \dotsc, i_m\} \subseteq \{1, \dotsc, n\}$. The proof is now complete.  
\end{proof}


		\section{Canonical decomposition of an $\mathbb{A}_r$-contraction}\label{annulus contraction}
		
		\vspace{0.2cm}
		
\noindent The next step to the Wold decomposition of an isometry is the canonical decomposition of a contraction as in Theorem \ref{thm104}. Indeed, for any contraction $T$ on a Hilbert space $\mathcal{H}$, the space $\mathcal{H}$ admits an orthogonal decomposition $\mathcal{H}_u\oplus \mathcal{H}_c$ into reducing subspaces of $T$ such that $T|_{\mathcal{H}_u}$ is a unitary and  $T|_{\mathcal{H}_c}$ is a completely non-unitary (c.n.u.) contraction. Note that $\mathcal{H}_u$ is the maximal reducing subspace on which $T$ acts as a unitary and it is given by 
\[
\mathcal{H}_u =\{h \in \mathcal{H}: \|T^nh\|=\|h\|=\|T^{*n}h\|, \ \ n= 0, 1, 2, \dotsc\}.
\]
Recently, Majee and Maji (see  Theorem 2.2 in \cite{Maji}) expressed the subspaces $\HS_u$ and $\HS_c$ in the following way:
\begin{equation}\label{Maji}
\HS_u=\overset{\infty}{\underset{k=0}{\bigcap}}\left[Ker(I-T^{*k}T^k)\cap Ker(I-T^kT^{*k})\right], \quad 	
	\mathcal{H}_c 
	=\overset{\infty}{\underset{k=0}{\bigvee}}\left\{Ran(I-T^{*k}T^k)\cup Ran(I-T^kT^{*k})\right\}.
\end{equation} 

\noindent Astonishingly, if $\widetilde{T}$ is another contraction on $\mathcal{H}$ that doubly commutes with $T$, then $\mathcal{H}_u , \HS_c$ reduce $\widetilde{T}$ as the following theorem reveals.

\begin{thm}[\cite{Slocinski-I} \& \cite{Pal}, Theorem 4.2]  \label{dc}
	For a doubly commuting pair of operators $T$ and $\widetilde{T}$ acting on a Hilbert space $\mathcal{H}$ such that $T$ is a contraction, let $\mathcal{H}=\mathcal{H}_u\oplus \mathcal{H}_c$ be the canonical decomposition of $T$. Then $\mathcal{H}_u, \mathcal{H}_c$ are reducing subspaces for $\widetilde{T}$.
\end{thm}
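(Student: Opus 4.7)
My plan is to work directly from the explicit formulas for $\HS_u$ and $\HS_c$ given in (\ref{Maji}), together with the hypothesis that $T$ and $\widetilde{T}$ doubly commute, i.e. $\widetilde{T}T = T\widetilde{T}$ and $\widetilde{T}T^* = T^*\widetilde{T}$. Since $\HS_c = \HS \ominus \HS_u$, it is enough to show that $\HS_u$ reduces $\widetilde{T}$, because a subspace reduces an operator if and only if its orthogonal complement does.

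The first key step is to verify that $\widetilde{T}$ commutes with each of the operators $T^{*k}T^k$ and $T^kT^{*k}$ for every $k \geq 0$. Using the double commutation relations, one computes
\[
\widetilde{T}\,T^{*k}T^k \;=\; T^{*k}\widetilde{T}\,T^k \;=\; T^{*k}T^k\,\widetilde{T},
\]
and similarly $\widetilde{T}\,T^kT^{*k} = T^kT^{*k}\,\widetilde{T}$. Taking adjoints of the double commutation identities gives $\widetilde{T}^*T = T\widetilde{T}^*$ and $\widetilde{T}^*T^* = T^*\widetilde{T}^*$, so $\widetilde{T}^*$ also doubly commutes with $T$, and the same computation shows that $\widetilde{T}^*$ commutes with $T^{*k}T^k$ and $T^kT^{*k}$ as well. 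Consequently, $\widetilde{T}$ and $\widetilde{T}^*$ both commute with the self-adjoint operators $I - T^{*k}T^k$ and $I - T^kT^{*k}$.

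The second step is then immediate: if $A$ is a bounded operator commuting with a self-adjoint operator $B$, then $\ker B$ is invariant under $A$, because $B(Ax) = A(Bx) = 0$ for $x \in \ker B$. Applying this to $A \in \{\widetilde{T}, \widetilde{T}^*\}$ and $B \in \{I - T^{*k}T^k,\, I - T^kT^{*k}\}$, I conclude that every kernel $\ker(I - T^{*k}T^k)$ and $\ker(I - T^kT^{*k})$ is a reducing subspace for $\widetilde{T}$. Since an arbitrary intersection of reducing subspaces is reducing, the formula
\[
\HS_u \;=\; \bigcap_{k=0}^{\infty}\bigl[\ker(I - T^{*k}T^k) \cap \ker(I - T^kT^{*k})\bigr]
\]
shows that $\HS_u$ reduces $\widetilde{T}$, and hence $\HS_c = \HS_u^{\perp}$ reduces $\widetilde{T}$ as well, completing the argument.

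I do not anticipate a serious obstacle here: the proof is essentially a one-line consequence of the observation that double commutation passes from $T$ to $T^*T$ and $TT^*$, together with the clean description (\ref{Maji}) of the canonical subspaces as intersections of kernels of operators built from $T$ and $T^*$. The only care required is the symmetry argument showing that $\widetilde{T}^*$ also doubly commutes with $T$, which is what upgrades invariance to the two-sided reducing property.
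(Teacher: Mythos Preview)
Your proof is correct and follows essentially the same approach that the paper itself indicates: the paper does not give a standalone proof of this cited theorem, but in Remark~\ref{rem303} it sketches exactly your argument (commutation of $\widetilde{T}$ with $I-T^{*n}T^n$ forces invariance of each kernel, hence of their intersection), and the proof of Lemma~\ref{reducing} carries out the same computation in the $\A_r$-contraction setting using the defect operators $D_{T^n}$. There is nothing to add.
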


\noindent In this Section, we find an analogous canonical decomposition for an $\A_r$-contraction and more interestingly such a decomposition follows from the canonical decomposition of a contraction. First, we define the natural classes of $\A_r$-contractions.
\begin{defn}
	An $\A_r$-contraction $T$ acting on a Hilbert space $\mathcal{H}$ is said to be 
	\begin{enumerate}
		\item  a \textit{completely non-unitary} $\mathbb{A}_r$-\textit{contraction} or simply a \textit{c.n.u.} $\mathbb{A}_r$-\textit{contraction} if there is no non-zero closed linear subspace of $\mathcal{H}$ that reduces $T$ and on which $T$ acts as an $\mathbb{A}_r$-unitary;
		\item a \textit{completely non-isometry} $\mathbb{A}_r$-\textit{contraction} or simply a \textit{c.n.i. $\mathbb{A}_r$-contraction} if there is no non-zero closed linear subspace of $\mathcal{H}$ that reduces $T$ and on which $T$ acts as an $\mathbb{A}_r$-isometry.
	\end{enumerate}
\end{defn}
Below we mention a basic and simple result on an $\A_r$-contraction which will be used frequently in sequel.
 
\begin{lem}[\cite{N-S1}, Lemma 2.5]\label{lem503}
	Let $T$ be an $\A_r$-contraction. Then $rT^{-1}$ is also an $\A_r$-contraction. Moreover, both $T$ and $rT^{-1}$ are contractions. 
\end{lem}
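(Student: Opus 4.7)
The plan is to exploit the key symmetry that the involution $\phi(z)=r/z$ maps $\overline{\A}_r$ onto itself, so von Neumann's inequality for $T$ on the annulus transfers to von Neumann's inequality for $rT^{-1}$ on the annulus. First I would observe that invertibility of $T$ is immediate: since $\overline{\A}_r$ is a spectral set for $T$ we have $\sigma(T)\subseteq \overline{\A}_r$, and $0\notin\overline{\A}_r$, so $T^{-1}$ exists as a bounded operator. Then $rT^{-1}$ is well-defined.

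Next I would verify the contractivity assertions directly from the spectral set hypothesis. Applying von Neumann's inequality to the polynomial $f(z)=z$ gives $\|T\|\le \sup_{z\in\overline{\A}_r}|z|=1$, so $T$ is a contraction. Applying it to the rational function $f(z)=r/z$, whose only pole is at $0\notin\overline{\A}_r$, gives
\[
\|rT^{-1}\|\;=\;\|f(T)\|\;\le\;\sup_{z\in\overline{\A}_r}\Big|\frac{r}{z}\Big|\;=\;\frac{r}{\inf_{z\in\overline{\A}_r}|z|}\;=\;\frac{r}{r}\;=\;1.
\]
So $rT^{-1}$ is also a contraction.

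For the assertion that $rT^{-1}$ is itself an $\A_r$-contraction I would first handle the spectrum. By the spectral mapping theorem $\sigma(rT^{-1})=\{r/\lambda:\lambda\in\sigma(T)\}$, and for $\lambda\in\overline{\A}_r$ we have $r\le|\lambda|\le 1$, hence $r\le|r/\lambda|\le 1$, so $r/\lambda\in\overline{\A}_r$. Thus $\sigma(rT^{-1})\subseteq\overline{\A}_r$.

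Finally, to verify von Neumann's inequality for $rT^{-1}$, I would take any rational $g=p/q$ with $q$ non-vanishing on $\overline{\A}_r$ and consider the composition $f(z)=g(r/z)$. Writing $f$ as a rational function in $z$, its poles in $\mathbb C\setminus\{0\}$ are the points $r/\mu$ for $\mu$ a zero of $q$; since $\mu\notin\overline{\A}_r$, these lie outside $\overline{\A}_r$, and a possible pole at $z=0$ is also outside $\overline{\A}_r$. So $f$ is admissible for $T$, and by functional calculus $f(T)=g(rT^{-1})$. Applying von Neumann's inequality for $T$ and using that $\phi(z)=r/z$ is a bijection of $\overline{\A}_r$ onto itself yields
\[
\|g(rT^{-1})\|=\|f(T)\|\le \sup_{z\in\overline{\A}_r}|g(r/z)|=\sup_{w\in\overline{\A}_r}|g(w)|,
\]
which is exactly von Neumann's inequality for $rT^{-1}$. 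There is no real obstacle here; the only step that requires care is tracking the poles of $f(z)=g(r/z)$ and confirming that the substitution $w=r/z$ is a bijection of $\overline{\A}_r$ so that suprema transfer cleanly.
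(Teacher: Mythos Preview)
Your proof is correct and is the natural argument via the involution $\phi(z)=r/z$ of $\overline{\A}_r$. Note that the paper does not actually prove this lemma itself; it is imported verbatim from \cite{N-S1} (Lemma~2.5), so there is no in-paper proof to compare against, but your approach is the standard one and is almost certainly what the cited reference does as well.
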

\begin{rem}\label{implications} The following diagram sums up the relations between the operators associated with $\DC$ and $\CA_r$ :
\[
\xymatrix{
	\mathbb{A}_r\mbox{-unitary} \ \ \ar@{=>}[r] & \ \ 
	\mathbb{A}_r\mbox{-isometry} \ar@{=>}[r] & \ \
	\mathbb{A}_r\mbox{-contraction} \ar@{=>}[d]&\\
	\mbox{unitary}  \ \ \ar@{=>}[u] \ \ \ar@{=>}[r]& \ \ 
	\mbox{isometry}  \ar@{=>}[r] & \ \
	\mbox{contraction} .\\
}
\]
\end{rem}		
\begin{thm}
	\label{canonical decomposition}$($\textit{Canonical decomposition of an $\mathbb{A}_r$-contraction}$)$ For every $\mathbb{A}_r$-contraction $T$ on a Hilbert space $\mathcal{H},$ the space $\HS$ admits a unique orthogonal decomposition into closed reducing subspaces of $T$, say $\mathcal{H}= \mathcal{H}_{a} \oplus \mathcal{H}_{c},$ such that $T|_{\mathcal{H}_a}$ is an $\mathbb{A}_r$-unitary and $T|_{\mathcal{H}_c}$ is a c.n.u. $\mathbb{A}_r$-contraction; $\mathcal{H}_{a}$ or $\mathcal{H}_{c}$ may equal the trivial subspace $\{0\}$. Moreover, the spaces can be formulated as $\mathcal{H}_a=\mathcal{H}_u \oplus \mathcal{H}_r$ and $\mathcal{H}_c=\mathcal{H}_{c1} \cap \mathcal{H}_{c2},$ where,
	\begin{align*}
		\mathcal{H}_u
		&=\overset{\infty}{\underset{k=0}{\bigcap}}\left[Ker(I-T^{*k}T^k)\cap Ker(I-T^kT^{*k})\right],\\
		\mathcal{H}_r 
		&=\overset{\infty}{\underset{k=0}{\bigcap}}\left[Ker\left(I-(rT^{-1})^{*k}(rT^{-1})^k\right)\cap Ker\left(I-(rT^{-1})^k(rT^{-1})^{*k}\right)\right],\\
		\mathcal{H}_{c1}  
		&= \overset{\infty}{\underset{k=0}{\bigvee}}\left\{Ran(I-T^{*k}T^k)\cup Ran(I-T^kT^{*k})\right\}, \\
		\mathcal{H}_{c2} &=\overset{\infty}{\underset{k=0}{\bigvee}}\left\{Ran(I-(rT^{-1})^{*k}(rT^{-1})^k) \cup Ran(I-(rT^{-1})^k(rT^{-1})^{*k})\right\}.
	\end{align*}
\end{thm}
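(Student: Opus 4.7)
The plan is to derive the $\mathbb{A}_r$-canonical decomposition from two applications of the classical canonical decomposition (Theorem~\ref{thm104} together with the Majee--Maji formulas~(\ref{Maji})): one to $T$ and one to $rT^{-1}$. Since $\sigma(T) \subseteq \overline{\mathbb{A}}_r$ does not contain the origin, $T$ is invertible. By Lemma~\ref{lem503}, both $T$ and $rT^{-1}$ are contractions on $\mathcal{H}$, so Theorem~\ref{thm104} with~(\ref{Maji}) applied to $T$ gives the orthogonal decomposition $\mathcal{H} = \mathcal{H}_u \oplus \mathcal{H}_{c1}$ in which $T|_{\mathcal{H}_u}$ is a unitary and $T|_{\mathcal{H}_{c1}}$ is a c.n.u.\ contraction, with $\mathcal{H}_u$ and $\mathcal{H}_{c1}$ given exactly by the formulas in the statement. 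Applied to $rT^{-1}$ it produces $\mathcal{H} = \mathcal{H}_r \oplus \mathcal{H}_{c2}$ with $rT^{-1}|_{\mathcal{H}_r}$ a unitary and $rT^{-1}|_{\mathcal{H}_{c2}}$ c.n.u. Invertibility of $T$ together with Lemma~\ref{lem412} ensures that $\mathcal{H}_u$ also reduces $rT^{-1}$ and that $\mathcal{H}_r$ reduces $T$.

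The central step is to prove $\mathcal{H}_u \perp \mathcal{H}_r$. Since $T|_{\mathcal{H}_u}$ is unitary, the operator $rT^{-1}|_{\mathcal{H}_u} = r(T|_{\mathcal{H}_u})^{-1}$ is $r$ times a unitary and has norm $r<1$, so it cannot act isometrically on any non-zero subspace. Because $\mathcal{H}_u$ reduces $rT^{-1}$, the Majee--Maji formulas for the canonical decomposition of the contraction $rT^{-1}|_{\mathcal{H}_u}$ coincide with $\mathcal{H}_u \cap \mathcal{H}_r$ (unitary part) and $\mathcal{H}_u \cap \mathcal{H}_{c2}$ (c.n.u.\ part); the strict-contraction property therefore forces $\mathcal{H}_u \cap \mathcal{H}_r = \{0\}$ and $\mathcal{H}_u \subseteq \mathcal{H}_{c2} = \mathcal{H}_r^{\perp}$. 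Setting $\mathcal{H}_a := \mathcal{H}_u \oplus \mathcal{H}_r$ and $\mathcal{H}_c := \mathcal{H}_a^{\perp} = \mathcal{H}_{c1} \cap \mathcal{H}_{c2}$, the restriction $T|_{\mathcal{H}_a}$ is block diagonal---a unitary on $\mathcal{H}_u$ and $r$ times a unitary on $\mathcal{H}_r$---so it is an $\mathbb{A}_r$-unitary by the implication $(3)\Rightarrow(1)$ of Theorem~\ref{thm302}.

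What remains is to verify that $T|_{\mathcal{H}_c}$ is a c.n.u.\ $\mathbb{A}_r$-contraction and that the decomposition is unique. If a non-zero closed $\mathcal{L} \subseteq \mathcal{H}_c$ reduces $T$ with $T|_{\mathcal{L}}$ an $\mathbb{A}_r$-unitary, Theorem~\ref{thm302} decomposes $\mathcal{L} = \mathcal{L}_u \oplus \mathcal{L}_r$ where $T|_{\mathcal{L}_u}$ and $rT^{-1}|_{\mathcal{L}_r}$ are unitaries; maximality of $\mathcal{H}_u$ and $\mathcal{H}_r$ as the canonical unitary parts of $T$ and $rT^{-1}$ forces $\mathcal{L}_u \subseteq \mathcal{H}_u$ and $\mathcal{L}_r \subseteq \mathcal{H}_r$, whereas $\mathcal{L} \subseteq \mathcal{H}_u^{\perp} \cap \mathcal{H}_r^{\perp}$, so $\mathcal{L} = \{0\}$. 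For uniqueness, any competing decomposition $\mathcal{H} = \mathcal{L}_a \oplus \mathcal{L}_c$ satisfies $\mathcal{L}_a \subseteq \mathcal{H}_a$ by the same maximality argument, and then $\mathcal{H}_a \ominus \mathcal{L}_a \subseteq \mathcal{L}_c$ would be an $\mathbb{A}_r$-unitary reducing subspace inside a c.n.u.\ $\mathbb{A}_r$-contraction, hence trivial. The chief obstacle is the orthogonality step in the second paragraph; its resolution relies on the fact that the Majee--Maji subspaces restrict cleanly to reducing subspaces, which converts an intersection-with-a-strict-contraction computation into the desired orthogonal containment $\mathcal{H}_u \subseteq \mathcal{H}_r^{\perp}$.
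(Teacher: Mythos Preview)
Your proposal is correct and follows essentially the same approach as the paper: apply the classical canonical decomposition to both $T$ and $rT^{-1}$, set $\mathcal{H}_a=\mathcal{H}_u\oplus\mathcal{H}_r$, invoke Theorem~\ref{thm302} to see $T|_{\mathcal{H}_a}$ is an $\mathbb{A}_r$-unitary, and then run the maximality and uniqueness arguments exactly as you do. The only cosmetic difference is in the orthogonality step $\mathcal{H}_u\perp\mathcal{H}_r$: the paper (following its proof of Theorem~\ref{Wold decomposition}) shows $\mathcal{H}_u\cap\mathcal{H}_r=\{0\}$ by the direct norm computation $\|x\|=\|Tx\|=\|rT^{-1}Tx\|=r\|x\|$, whereas you obtain the containment $\mathcal{H}_u\subseteq\mathcal{H}_r^{\perp}$ by observing that $rT^{-1}|_{\mathcal{H}_u}$ is a strict contraction and that the Majee--Maji subspaces restrict correctly to the reducing subspace $\mathcal{H}_u$.
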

\begin{proof}
	 The proof runs along the same lines as the proof of Theorem \ref{Wold decomposition}. We mention a few important steps here for the sake of completeness.  The spaces
	\begin{align*}
	\mathcal{H}_u
	&=\left\{h \in \mathcal{H} \ : \|T^nh\|=\|h\|=\|T^{*n}h\|, \ \ n= 0, 1, 2,\dotsc\right\} \quad  \text{and}\\
	\mathcal{H}_r 
	&=\left\{h \in \mathcal{H} \ : \ \|(rT^{-1})^nh\|=\|h\|=\|(rT^{-1})^{*n}h\|, \  \ n = 0, 1, 2,\dotsc \right\}.
\end{align*}
are closed subspaces of $\mathcal{H}$ reducing $T, T^{-1}$ which follows from Lemma \ref{lem412}. Moreover, $T|_{\mathcal{H}_u}$ and $rT^{-1}|_{\mathcal{H}_r}$ are unitaries. Let $\mathcal{H}_a=\mathcal{H}_u \oplus \mathcal{H}_r$ and $\mathcal{H}_c=\mathcal{H}\ominus\mathcal{H}_a$. Then we have an orthogonal decomposition $\mathcal{H}=\mathcal{H}_{a}\oplus \mathcal{H}_{c}$ such that $\mathcal{H}_a, \mathcal{H}_c$ reduce $T$. It follows from Theorem \ref{thm302} that $T|_{\mathcal{H}_a}$ is an $\A_r$-unitary. We claim that  $T|_{\mathcal{H}_c}$ is a c.n.u. $\A_r$-unitary. Let $\mathcal{H}_{ca}$ be a closed subspace of $\mathcal{H}$ reducing $T$ such that $T|_{\mathcal{H}_{ca}}$ is an $\mathbb{A}_r$-unitary.
	By Theorem \ref{thm302}, we have an orthogonal decomposition of $ \mathcal{H}_{ca},$ say $\mathcal{H}_{ca}=\mathcal{H}_{cu}\oplus \mathcal{H}_{cr}$, such that $T|_{\mathcal{H}_{cu}}$ and $rT^{-1}|_{\mathcal{H}_{cr}}$ are unitaries. From the definition of $\mathcal{H}_u$ and $\mathcal{H}_r$, it follows that $\mathcal{H}_{cu} \subseteq \mathcal{H}_u$ and $\mathcal{H}_{cr} \subseteq \mathcal{H}_r$. Thus,
$
 \mathcal{H}_{ca} \subseteq \mathcal{H}_u \oplus \mathcal{H}_r =\mathcal{H}_{a}
$
which shows that $\mathcal{H}_a$ is the maximal closed subspace of $\mathcal{H}$ that reduces $T$ to an $\A_r$-uniatry. Consequently, $ T|_{\mathcal{H}_c}$ is a c.n.u. $\mathbb{A}_r$-contraction. Assume that $\mathcal{H}=\mathcal{L}_a \oplus \mathcal{L}_c$ be such that $\mathcal{L}_a$ and $ \mathcal{L}_c$ reduce $T$ to an $\mathbb{A}_r$-unitary and a c.n.u. $\mathbb{A}_r$-contraction respectively. Since $\mathcal{H}_a$ is the maximal closed  subspace of $\mathcal{H}$ that reduces $T$ to an $\A_r$-unitary, we have that $\mathcal{L}_{a} \subseteq \mathcal{H}_a$. Then $T|_{\mathcal{H}_a\ominus \mathcal{L}_{a}}$ is an $\mathbb{A}_r$-unitary. Note that $\mathcal{H}_a \ominus \mathcal{L}_a \subseteq \mathcal{H} \ominus \mathcal{L}_a =\mathcal{L}_c$. Since $T|_{\mathcal{L}_c}$ is a c.n.u. $\mathbb{A}_r$-contraction, we must have $\mathcal{H}_a \ominus \mathcal{L}_a=\{0\}$. Consequently, $\mathcal{H}_a= \mathcal{L}_a$ and $\mathcal{H}_c= \mathcal{L}_c$. Thus, the decomposition is unique.
	
	\vspace{0.2cm}
	
\noindent Now we determine the desired formulation of the spaces. It follows from (\ref{Maji}) that
\[
	\mathcal{H}_u
=\overset{\infty}{\underset{k=0}{\bigcap}}\left[Ker(I-T^{*k}T^k)\cap Ker(I-T^kT^{*k})\right]
.
\]
Similarly, applying (\ref{Maji}) to the contraction $rT^{-1}$, we get that
\[
\mathcal{H}_r 
=\overset{\infty}{\underset{k=0}{\bigcap}}\left[Ker\left(I-(rT^{-1})^{*k}(rT^{-1})^k\right)\cap Ker\left(I-(rT^{-1})^k(rT^{-1})^{*k}\right)\right].
\] 
Since $\mathcal{H}_a=\mathcal{H}_u \oplus \mathcal{H}_r$ and $\mathcal{H}_c=\mathcal{H}\ominus(\mathcal{H}_u \oplus \mathcal{H}_r)$, we have that $\mathcal{H}_c=\mathcal{H}_u^{\perp} \cap \mathcal{H}_r^{\perp}$. Some standard calculation and (\ref{Maji}) yields that
	\begin{align*}
			\mathcal{H}_u^{\perp}
			&=\left[ \overset{\infty}{\underset{k=0}{\bigcap}}[Ker(I-T^{*k}T^k)\bigcap Ker(I-T^kT^{*k})] \right]^{\perp}
			=\overset{\infty}{\underset{k=0}{\bigvee}}\left\{Ran(I-T^{*k}T^k)\bigcup Ran(I-T^kT^{*k})\right\}
			\end{align*}	
		and
			\begin{align*}
			\mathcal{H}_r^{\perp}
			&=\left[ \overset{\infty}{\underset{k=0}{\bigcap}}[Ker\left(I-(rT^{-1})^{*k}(rT^{-1})^k\right)\bigcap Ker\left(I-(rT^{-1})^k(rT^{-1})^{*k}\right)]\right]^{\perp}\\
			&=\overset{\infty}{\underset{k=0}{\bigvee}}\left\{Ran(I-(rT^{-1})^{*k}(rT^{-1})^k) \cup Ran(I-(rT^{-1})^{k}(rT^{-1})^{*k})\right\}.
	\end{align*}
	This completes the proof.
\end{proof}

Now it is clear that for an $\mathbb{A}_r$-contraction $T$, the contractions $T$ and $rT^{-1}$ play pivotal role in deciphering the structure of $T$. The rest of this Section will be devoted to study the class of c.n.u. $\mathbb{A}_r$-contractions. Indeed, we obtain necessary and sufficient conditions such that an $\A_r$-contraction $T$ becomes a c.n.u. $\A_r$-contraction with the help of the commuting pair $(T, rT^{-1})$.
\begin{thm}\label{cnu contractions}
	Let $T \in \mathcal{B}(\mathcal{H})$ be an $\mathbb{A}_r$-contraction. Then the following are equivalent : 
	\begin{enumerate}
		\item $T$ is a c.n.u. $\mathbb{A}_r$-contraction; 
		\item $T$ and $rT^{-1}$ are c.n.u. contractions;
		\item $T$ and $rT^{-1}$ are c.n.i. contractions.
	\end{enumerate}
\end{thm}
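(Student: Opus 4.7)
My plan is to establish the chain $(1) \Leftrightarrow (2) \Leftrightarrow (3)$, observing that the implication $(2) \Leftrightarrow (3)$ is essentially immediate from invertibility, while $(1) \Leftrightarrow (2)$ is the substantive step and rests on Theorem \ref{thm302}.

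For $(1) \Rightarrow (2)$, I would argue the contrapositive twice. First, suppose $T$ has a non-zero reducing subspace $\mathcal{L}$ on which $T|_{\mathcal{L}}$ is a unitary. Then $T|_{\mathcal{L}}$ is normal and, using $\sigma(T|_{\mathcal{L}}) \subseteq \sigma(T) \subseteq \CA_r$ combined with $\sigma(T|_{\mathcal{L}}) \subseteq \mathbb{T}$, we obtain $\sigma(T|_{\mathcal{L}}) \subseteq \mathbb{T} \subseteq \mathbb{T} \cup r\mathbb{T} = b\CA_r$. Hence $T|_{\mathcal{L}}$ is an $\A_r$-unitary, contradicting (1). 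Next, if $rT^{-1}$ has a non-zero reducing subspace $\mathcal{M}$ on which $rT^{-1}|_{\mathcal{M}}$ is a unitary, then Lemma \ref{lem412} shows that $\mathcal{M}$ also reduces $T$. Writing $T|_{\mathcal{M}} = r(rT^{-1}|_{\mathcal{M}})^*$ exhibits $T|_{\mathcal{M}}$ as $r$ times a unitary; thus it is normal with spectrum in $r\mathbb{T} \subseteq b\CA_r$, again giving an $\A_r$-unitary part of $T$, a contradiction.

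For $(2) \Rightarrow (1)$, suppose instead that $T$ admits a non-zero reducing subspace $\mathcal{N}$ on which $T|_{\mathcal{N}}$ is an $\A_r$-unitary. By Theorem \ref{thm302}, $\mathcal{N}$ decomposes as $\mathcal{N} = \mathcal{N}_u \oplus \mathcal{N}_r$ with $T|_{\mathcal{N}_u}$ a unitary and $rT^{-1}|_{\mathcal{N}_r}$ a unitary, and each summand reduces $T$ (hence also $rT^{-1}$). Since $\mathcal{N} \neq \{0\}$, at least one of $\mathcal{N}_u$ or $\mathcal{N}_r$ is non-zero, yielding a non-trivial unitary part of $T$ or of $rT^{-1}$, contradicting (2).

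For $(2) \Leftrightarrow (3)$, the key observation is that $T$ is invertible since $\sigma(T) \subseteq \CA_r$ and $0 \notin \CA_r$, and so is $rT^{-1}$. Consequently, by Lemma \ref{lem412}, on any reducing subspace the restriction remains invertible, so an isometric part is automatically unitary. Thus being c.n.u.\ and being c.n.i.\ coincide separately for $T$ and for $rT^{-1}$, establishing the equivalence; the implication $(3) \Rightarrow (2)$ is also trivially available since a unitary is an isometry. I do not anticipate a serious obstacle; the main technical ingredient is invoking Theorem \ref{thm302} in the right direction to convert between the $\A_r$-unitary language of (1) and the pair-of-contractions language of (2).
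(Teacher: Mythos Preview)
Your proof is correct and follows essentially the same route as the paper for the equivalence $(1)\Leftrightarrow(2)$: both directions rest on Theorem~\ref{thm302}, which identifies $\A_r$-unitaries with operators splitting as unitary $\oplus$ $r$-times unitary. Your spectrum argument in $(1)\Rightarrow(2)$ is just a rephrasing of the implication $(3)\Rightarrow(1)$ of Theorem~\ref{thm302}.

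For $(2)\Leftrightarrow(3)$ you take a slightly more direct path than the paper. The paper invokes Levan's decomposition (Theorem~\ref{Levan}) to produce an isometric part and then observes that invertibility forces it to be unitary. You bypass Levan entirely and argue, via Lemma~\ref{lem412}, that any isometric reducing part of an invertible operator is automatically an invertible isometry, hence unitary; so c.n.u.\ and c.n.i.\ coincide for each of $T$ and $rT^{-1}$ individually. This is a clean simplification: it uses only the trivial fact that an invertible isometry is a unitary, whereas the paper's detour through Theorem~\ref{Levan} is unnecessary here. Both arguments yield the same conclusion with the same essential content.
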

\begin{proof} We prove $(1) \iff (2) \iff (3)$. Since $T$ is an $\A_r$-contraction, it follows from Lemma \ref{lem503} that $rT^{-1}$ is an $\A_r$-contraction. Moreover, $T$ and $rT^{-1}$ both are contractions. 

	\vspace{0.2cm}

\noindent $(1) \implies (2).$
		 Assume that $T$ is a c.n.u. $\mathbb{A}_r$-contraction. Let if possible there be a closed subspace $\mathcal{L}$ of $\mathcal{H}$ reducing $T$ such that either $T|_{\mathcal{L}}$ or $rT^{-1}|_{\mathcal{L}}$ is a unitary. In either case, it follows from Theorem \ref{thm302} that $T|_\mathcal{L}$ is an $\mathbb{A}_r$-unitary. Consequently, $\mathcal{L}=\{0\}$ and so, both $T$ and $rT^{-1}$ are c.n.u. contractions.

\vspace{0.2cm}

\noindent $(2) \implies (1).$		 
	Let $T$ and $rT^{-1}$ be c.n.u. contractions. Let if possible, there exists a closed subspace $\mathcal{H}_a$ of $\mathcal{H}$ that reduces $T$ to an $\mathbb{A}_r$-unitary. By Theorem \ref{thm302}, we have the orthogonal decomposition of $\mathcal{H}_a$ into closed subspaces $\mathcal{H}_u$ and $\mathcal{H}_r$ reducing $T$ such that $T|_{\mathcal{H}_u}$ and $rT^{-1}|_{\mathcal{H}_r}$ are unitaries. Consequently, $\mathcal{H}_u=\mathcal{H}_r=\{0\}$ and so, $\mathcal{H}_a=\{0\}$. Thus, $T$ is a c.n.u. $\A_r$-contraction.
	
\vspace{0.2cm}
	
\noindent $(2) \implies (3).$ Suppose $T$ and $rT^{-1}$ are c.n.u. contractions.
By Theorem \ref{Levan}, $\mathcal{H}$ decomposes into an orthogonal sum $\mathcal{H}=\mathcal{H}_1 \oplus \mathcal{H}_2$ such that $\mathcal{H}_1$ and $\mathcal{H}_2$ reduce $T$ to an isometry and a c.n.i. contraction respectively. It follows from the invertibility of $T$ and Lemma \ref{lem412} that $T|_{\mathcal{H}_1}$ is invertible and so is a unitary. Since $T$ is a c.n.u. contraction, we have $\mathcal{H}_1=\{0\}$. Thus, $\mathcal{H}=\mathcal{H}_2$ and hence, $T$ is a c.n.i. contraction. Similarly, one can prove that $rT^{-1}$ is a c.n.i. contraction.

\vspace{0.2cm}

\noindent $(3) \implies (2).$ Let $T$ and $rT^{-1}$ be c.n.i. contractions. Then $T$ and $rT^{-1}$ are evidently c.n.u. contractions. This finishes the proof.

\end{proof}
A celebrated theorem due to N. Levan (see Theorem \ref{Levan}) states that every c.n.u. contraction can be orthogonally decomposed into a pure isometry (i.e. a shift operator) and a c.n.i. contraction. We now find an analogue of this result for an $\A_r$-contraction. The idea of our proof is similar to that of Theorem 1 in \cite{Levan}.
\begin{thm}
	\label{Levan type decomposition} $($\textit{Decomposition of a c.n.u. $\mathbb{A}_r$-contraction}$)$ For every c.n.u. $\mathbb{A}_r$-contraction $T$ on a Hilbert space $\mathcal{H}$, the space $\mathcal{H}$ admits a unique orthognal decomposition $\mathcal{H}= \mathcal{H}_{1} \oplus \mathcal{H}_{2}$ into reducing subspaces of $T$ such that the $T|_{\mathcal{H}_{1}}$ is a pure $\mathbb{A}_r$-isometry and $T|_{\mathcal{H}_{2}}$ is a c.n.i. $\mathbb{A}_r$-contraction. 
\end{thm}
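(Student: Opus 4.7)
The plan is to identify the maximal closed reducing subspace $\mathcal{H}_1$ of $T$ on which $T$ acts as an $\A_r$-isometry, and then to verify that the c.n.u. hypothesis automatically makes $T|_{\mathcal{H}_1}$ pure, while the maximality makes $T|_{\mathcal{H}_1^\perp}$ c.n.i. This mirrors the approach followed in \cite{Levan}, but with ``isometry'' replaced by ``$\A_r$-isometry''.

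Concretely, I would set
\[
\mathcal{H}_1 := \overline{\mathrm{span}}\bigcup\{\mathcal{L}\subseteq\mathcal{H} : \mathcal{L}\text{ is closed, reduces }T,\text{ and }T|_\mathcal{L}\text{ is an }\A_r\text{-isometry}\}.
\]
Being a closed linear sum of reducing subspaces, $\mathcal{H}_1$ itself reduces $T$. Since $T$ is an $\A_r$-contraction, $\sigma(T)\subseteq \CA_r$ excludes $0$, so $T$ is invertible, and Lemma \ref{lem412} gives that $T|_{\mathcal{H}_1}$ is invertible as well. To see that $T|_{\mathcal{H}_1}$ is itself an $\A_r$-isometry, I would invoke the algebraic characterization in Theorem \ref{thm407}: setting $K_T := -T^{*2}T^2 + (1+r^2)T^*T - r^2 I$, an invertible operator $T|_\mathcal{L}$ on a reducing subspace $\mathcal{L}$ is an $\A_r$-isometry if and only if $K_T$ vanishes on $\mathcal{L}$. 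Each constituent $\mathcal{L}$ in the definition of $\mathcal{H}_1$ lies in $\ker K_T$, so their linear span does too; since $\ker K_T$ is closed, $\mathcal{H}_1 \subseteq \ker K_T$, whence $T|_{\mathcal{H}_1}$ is an $\A_r$-isometry.

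For purity, any closed subspace $\mathcal{N}\subseteq\mathcal{H}_1$ that reduces $T|_{\mathcal{H}_1}$ and on which $T|_{\mathcal{H}_1}$ is an $\A_r$-unitary would automatically be a reducing subspace of $T$, so the c.n.u. hypothesis forces $\mathcal{N}=\{0\}$. Setting $\mathcal{H}_2:=\mathcal{H}\ominus\mathcal{H}_1$, I next claim that $T|_{\mathcal{H}_2}$ is a c.n.i. $\A_r$-contraction: any closed subspace $\mathcal{M}\subseteq\mathcal{H}_2$ that reduces $T|_{\mathcal{H}_2}$ and on which $T$ is an $\A_r$-isometry must, by the very construction of $\mathcal{H}_1$, be contained in $\mathcal{H}_1$, and hence in $\mathcal{H}_1\cap\mathcal{H}_2=\{0\}$. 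For uniqueness, if $\mathcal{H}=\mathcal{L}_1\oplus\mathcal{L}_2$ is any decomposition of the desired type, maximality forces $\mathcal{L}_1\subseteq\mathcal{H}_1$; then $\mathcal{H}_1\ominus\mathcal{L}_1$ is a reducing subspace on which $T$ is an $\A_r$-isometry and which sits inside the c.n.i. piece $\mathcal{L}_2$, so it vanishes.

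The main technical point to verify is that the identity $K_T|_\mathcal{L}=0$ is sufficient (and not merely necessary) for $T|_\mathcal{L}$ to be an $\A_r$-isometry on a given reducing subspace $\mathcal{L}$. This is where Theorem \ref{thm407} does the heavy lifting, but its application requires $T|_\mathcal{L}$ to be invertible and $\sigma(T|_\mathcal{L})\subseteq\CA_r$; both facts follow from $\mathcal{L}$ reducing the $\A_r$-contraction $T$, together with the elementary observation that the spectrum of the restriction to a reducing subspace is contained in $\sigma(T)$. Once this algebraic characterization is secured, the remainder of the argument rests only on the closedness of $\ker K_T$ and the routine manipulation of reducing subspaces.
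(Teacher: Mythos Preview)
Your proposal is correct and follows essentially the same route as the paper: define $\mathcal{H}_1$ as the closed span of all reducing subspaces on which $T$ is an $\A_r$-isometry, use the algebraic identity $-T^{*2}T^2+(1+r^2)T^*T-r^2I=0$ from Theorem \ref{thm407} to pass from the constituent subspaces to their closed span, then read off purity from the c.n.u.\ hypothesis and the c.n.i.\ property and uniqueness from maximality. Your phrasing in terms of $\ker K_T$ being closed is a slightly cleaner packaging of the paper's continuity argument, and your remark that one needs $\sigma(T|_{\mathcal{H}_1})\subseteq\CA_r$ is in fact unnecessary, since Theorem \ref{thm407} only assumes invertibility.
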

\begin{proof}
 Let $\mathcal{H}_1'$ be the span of all closed subspaces of $\mathcal{H}$ reducing $T$ restricted to which $T$ acts as an $\mathbb{A}_r$-isometry and let $\mathcal{H}_1$ be the closure of $\mathcal{H}_1'$ in $\HS$. Then $\mathcal{H}_1$ reduces $T$ and so, $\mathcal{H}$ admits the orthogonal decomposition $\mathcal{H}=\mathcal{H}_1 \oplus \mathcal{H}_2$. We show that $V=T|_{\mathcal{H}_1}$ is a pure $\A_r$-isometry. For the ease of computations, we let $\Lambda$ be the collection of all closed subspaces of $\mathcal{H}$ reducing $T$ to an $\A_r$-isometry. Take any $x \in \mathcal{H}_1'$. Then there exists $x_j \in \mathcal{L}_j \in \Lambda \ (1 \leq j \leq m)$ such that $x=x_1 + \dotsc + x_m$. By Theorem \ref{thm407}, we have
	$
	-V^{*2}V^2x_j+(1+r^2)V^*Vx_j-r^2x_j=0
	$
for every $1 \leq j \leq m$ and thus, $-V^{*2}V^2x+(1+r^2)V^*Vx-r^2x=0$. By continuity arguments, one can easily show that $-V^{*2}V^2x+(1+r^2)V^*Vx-r^2x=0$ for every $x \in \overline{\mathcal{H}_1'}=\mathcal{H}_1$. Again, it follows from Theorem \ref{thm407} that $T|_{\mathcal{H}_1}$ is an $\A_r$-isometry. Also, since $T$ is a c.n.u. contraction, it does not have any $\A_r$-unitary part and thus, $T|_{\mathcal{H}_1}$ is a pure $\A_r$-isometry. Consequently, $\mathcal{H}_1$ is the largest closed reducing subspace that reduces $T$ to a pure $\A_r$-isometry and so, $T|_{\mathcal{H}_2}$ is a c.n.i. $\mathbb{A}_r$-contraction.
	
\smallskip	
	
 To see the uniqueness, let us assume that $\mathcal{H}=\mathcal{L}_{1} \oplus \mathcal{L}_{2}$ such that $\mathcal{L}_1$ and $\mathcal{L}_2$ reduce $T$ to a pure $\mathbb{A}_r$-isometry and a c.n.i. $\mathbb{A}_r$-contraction respectively. By the definition of $\mathcal{H}_1$, it follows that $\mathcal{L}_1 \subseteq \mathcal{H}_1$ and so,  $\mathcal{H}_1 \ominus \mathcal{L}_1$ reduces $T$ to a pure $\A_r$-isometry. Since $\mathcal{H}_1 \ominus \mathcal{L}_1 \subseteq \mathcal{H} \ominus \mathcal{L}_1=\mathcal{L}_2$ and $T|_{\mathcal{L}_2}$ is a c.n.i. $\mathbb{A}_r$-contraction, we have  $\mathcal{H}_1 \ominus \mathcal{L}_1 =\{0\}$. Thus $\mathcal{H}_1 = \mathcal{L}_1, \mathcal{H}_2= \mathcal{L}_2$ and the proof is complete.
\end{proof}

\vspace{0.1cm}
	\section{Canonical decomposition of finitely many doubly commuting $\A_r$-contractions}\label{doubly commuting}
	
	\vspace{0.2cm}
	\noindent It follows from the definitions that if $(T_1, \dots , T_n)$ is an $\A_r^n$-contraction, then each of $T_1, \dots , T_n$ is an $\A_r$-contraction. Therefore, the class of commuting $n$ number of $\A_r$-contractions is larger than or equal to the class of $\A_r^n$-contractions. It is unlikely to have a converse to the above statement, which is to say that for a tuple of commuting $\A_r$-contractions $(T_1, \dots , T_n)$, we should not expect that $(T_1, \dots , T_n)$ will always become an $\A_r^n$-contraction. At this point we do not have a counter example to this, but the reason behind such a hunch is that it does not hold for the polydisc $\D^n$. Indeed, the famous example due to  Kaijser and Varopoulos (see \cite{Paulsen}, Example 5.7) shows that a commuting triple of contractions $(A_1,A_2,A_3)$ may fail to have $\overline{\D}^3$ as a spectral set because of the failure of von Neumann's inequality, whereas each of $A_1,A_2,A_3$ has $\overline{\D}$ as a spectral set. For this reason, we shall always consider $n$ commuting (or, doubly commuting) $\A_r$-contractions so to deal with a wider possible class than the $\A_r^n$-contractions while finding joint invariant or reducing subspaces and orthogonal decompositions.

\smallskip
	
A canonical decomposition does not hold in general for commuting contractions, i.e. for a pair of commuting contractions $A_1,A_2$ acting on a Hilbert space $\HS$, not always one can find common reducing subspaces $\HS_1,\HS_2 \,\subset \HS$ of $A_1,A_2$ such that $A_1|_{\HS_1}, A_2|_{\HS_1}$ are unitaries and $A_1|_{\HS_2}, A_2|_{\HS_2}$ are both c.n.u. contractions. In fact, commuting contractions are far reaching, we do not even have such a Wold decomposition for a pair of commuting isometries, e.g. see \cite{Slocinski}. Berger, Coburn and Lebow \cite{BCL} exhibited a Wold-type decomposition to finitely many commuting isometries $V_1, \dots , V_n$ with respect to the Wold decomposition of their product $V=\prod_{i=1}^n \,V_i$. However, in general there is no nonzero common reducing subspace $\HS_c \subset \HS$ of $V_1, \dots , V_n$ on which all of them act as pure isometries as shown in \cite{Slocinski}. Hence, S\l oci\'{n}ski, Burdak, Kosiek and a few others made efforts to find appropriate generalization of Wold and canonical decompositions for commuting and doubly commuting contractions, e.g. see \cite{Slocinski, BurdakI, BurdakIII, BurdakII} and the references therein. Recently, the first named author of this article generalize these results for any family of doubly commuting contractions in \cite{Pal}. The motivation behind exploring Wold or canonical decompositions for doubly commuting contractions naturally leads to the possibility of these decompositions for doubly commuting $\A_r$-isometries and $\A_r$-contractions.

\smallskip
	 
	 In this Section, we generalize the canonical decomposition as outlined in Theorem \ref{canonical decomposition} to any finite set of doubly commuting $\A_r$-contractions. The following lemma plays the central role in the proof of this result and also in the rest of the paper.
	
		\begin{lem} \label{reducing}
			For doubly commuting $\A_r$-contractions $T_1, T_2$  acting on a Hilbert space $\mathcal{H}$, if $\mathcal{H}=\mathcal{H}_a \oplus \mathcal{H}_c$ is the canonical decomposition of $T_1$ as in Theorem \ref{canonical decomposition}, then $\mathcal{H}_a, \mathcal{H}_c$ are reducing subspaces for $T_2$.
	\end{lem}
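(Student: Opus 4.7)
The plan is to exploit the explicit descriptions of $\mathcal{H}_u$ and $\mathcal{H}_r$ given in Theorem \ref{canonical decomposition} as countable intersections of kernels of self-adjoint operators built from $T_1$, $T_1^*$, and their inverses, and then verify that each of these self-adjoint operators commutes with both $T_2$ and $T_2^*$.

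First I would record two preliminary facts. Since $T_1$ is an $\A_r$-contraction, $\sigma(T_1)\subseteq \CA_r$, and because $r>0$ we have $0\notin \CA_r$, so $T_1$ is invertible. The doubly commuting relations $T_1T_2=T_2T_1$ and $T_1^*T_2=T_2T_1^*$ then give $T_1^{-1}T_2=T_2T_1^{-1}$ and, after taking adjoints, $(T_1^{-1})^*T_2=T_2(T_1^{-1})^*$; the identical identities with $T_2^*$ in place of $T_2$ follow in the same way. Consequently each of $T_1,\,T_1^*,\,T_1^{-1},\,(T_1^{-1})^*$ commutes with both $T_2$ and $T_2^*$.

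Next, observe that each of the self-adjoint operators
\[
I-T_1^{*k}T_1^k,\qquad I-T_1^kT_1^{*k},\qquad I-(rT_1^{-1})^{*k}(rT_1^{-1})^k,\qquad I-(rT_1^{-1})^k(rT_1^{-1})^{*k}
\]
is a non-commutative polynomial in $T_1,\,T_1^*,\,T_1^{-1},\,(T_1^{-1})^*$, and hence commutes with both $T_2$ and $T_2^*$. For any self-adjoint operator $A$ on $\mathcal{H}$ that commutes with $T_2$ and $T_2^*$, the subspace $Ker(A)$ reduces $T_2$: if $Ax=0$ then $AT_2x=T_2Ax=0$ and $AT_2^*x=T_2^*Ax=0$. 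This is precisely the mechanism exploited in Remark \ref{rem303} and Theorem 4.2 of \cite{Pal}.

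Finally, arbitrary intersections of reducing subspaces are reducing, so both $\mathcal{H}_u$ and $\mathcal{H}_r$ reduce $T_2$. Since the orthogonal sum of two mutually orthogonal reducing subspaces is reducing, and the orthogonal complement of a reducing subspace is reducing, both $\mathcal{H}_a=\mathcal{H}_u\oplus\mathcal{H}_r$ and $\mathcal{H}_c=\mathcal{H}\ominus\mathcal{H}_a$ reduce $T_2$. There is no real obstacle: the only nontrivial input is that double commutation together with invertibility of $T_1$ propagates automatically to $T_1^{-1}$ and $(T_1^{-1})^*$, which is exactly what is required because the formulas for $\mathcal{H}_u$ and $\mathcal{H}_r$ involve $T_1$ and its inverse symmetrically.
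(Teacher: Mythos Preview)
Your proof is correct and follows essentially the same route as the paper: both show that $\mathcal{H}_u$ and $\mathcal{H}_r$ reduce $T_2$ by checking that the self-adjoint operators $I-T_1^{*k}T_1^k$, $I-T_1^kT_1^{*k}$ (and their analogues for $rT_1^{-1}$) commute with $T_2$ and $T_2^*$, so their kernels, and hence the intersections, are reducing. The paper phrases this via the defect operators $D_{T_1^n}=(I-T_1^{*n}T_1^n)^{1/2}$, while you work directly with the unsquared operators; since $Ker(A)=Ker(A^{1/2})$ for positive $A$, the arguments are interchangeable, and your explicit remark that double commutation passes to $T_1^{-1}$ and $(T_1^{-1})^*$ fills in what the paper leaves to the word ``similarly''.
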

	\begin{proof} 
		It suffices to show that $\mathcal{H}_a$ reduces $T_2$. It follows from Theorem \ref{canonical decomposition} that $\mathcal{H}_a=\mathcal{H}_u \oplus \mathcal{H}_r$ such that
		\begin{align*}
		\mathcal{H}_u
		&=\left\{h \in \mathcal{H} \ : \|T^nh\|=\|h\|=\|T^{*n}h\|, \ \ n= 0, 1, 2,\dotsc\right\} \quad  \text{and}\\
		\mathcal{H}_r 
		&=\left\{h \in \mathcal{H} \ : \ \|(rT^{-1})^nh\|=\|h\|=\|(rT^{-1})^{*n}h\|, \  \ n = 0, 1, 2,\dotsc \right\}.
	\end{align*}
	Note that the spaces $\mathcal{H}_u$ and $\mathcal{H}_r$ can be written as
	\[
	\mathcal{H}_u= \underset{n \in \mathbb{Z}\setminus\{0\}}{\bigcap} \mbox{Ker} \ D_{T_1^n} \quad \text{and} \quad \mathcal{H}_r= \underset{n \in \mathbb{Z}\setminus\{0\}}{\bigcap} \mbox{Ker} \ D_{(rT_1^{-1})^n}
	\]
	where 
	\[
D_{T^n}:= \left\{
	\begin{array}{ll}
		(I-T^{*n}T^n)^{1\slash 2}, & n \geq 0 \\ \\
		(I-T^{|n|}T^{*|n|})^{1\slash 2}, & n \leq 0\\
	\end{array} 
	\right.
	\]
	for a contraction $T$. Since $T_1$ and $T_2$ doubly commute with each other, $T_2$ and $T_2^*$ commute with $(I-T_1^{*n}T_1^n)$ and $(I-T_1^{|n|}T_1^{*|n|})$ for every $n \geq 0.$ Consequently, $T_2$ and $T_2^*$ commute with $D_{T_1^n}$ for every $n \ne 0.$ If $x \in$ Ker $D_{T_1^n}$, then $D_{T_1^n}T_2x=T_2D_{T_1^n}x=0$ and so, $T_2x \in$ Ker $D_{T_1^n}.$ Similarly, $T_2^*x \in$ Ker $D_{T_1^n}$ for every $x \in$ Ker $D_{T_1^n}.$ We have that for every $n \ne 0,$ Ker $D_{T_1^n}$ is a reducing subspace for $T_2$. Therefore, $\mathcal{H}_u$ reduces $T_2.$ Similarly, we can prove that $\mathcal{H}_r$ reduces $T_2$.  Consequently, $\mathcal{H}_a$ reduces $T_2$ and we get the desired conclusion.
	\end{proof}
	
	We shall use the following terminologies throughout the rest of the paper.
\begin{defn} \label{defn:061}
	Let $T$ be an $\A_r$-contraction acting on a Hilbert space $\HS$.
	\begin{enumerate}
		\item[(i)] $T$ is an \textit{atom} if $T$ is either an $\A_r$-unitary or a c.n.u. $\A_r$-contraction.
		\item[(ii)] $T$ is an \textit{atom of type $t_u$} if $T$ is an $\A_r$-unitary.
		\item[(iii)] $T$ is an \textit{atom of type $t_c$} if $T$ is a c.n.u. $\A_r$-contraction. 
		\item[(iv)] $T$ is a \textit{non-atom} if $T$ is neither an $\A_r$-unitary nor a c.n.u. $\A_r$-contraction.
	\end{enumerate}	
\end{defn}

Now we present a main result of this Section, a canonical decomposition for a finite family of doubly commuting contractions.
	\begin{thm}\label{thm602}
		Let $(T_1, \dotsc, T_n)$ be a doubly commuting tuple of $\mathbb{A}_r$-contractions acting on a Hilbert space $\mathcal{H}$. Then there exists a unique decomposition of $\HS$ into an orthogonal sum of $2^n$ subspaces $\HS_1, \dotsc, \HS_{2^n}$ of $\HS$ such that
		\begin{enumerate}
			\item every $\HS_j
			, 1 \leq j \leq 2^n$, is a joint reducing subspace for $T_1, \dotsc, T_n$ ;
			\item $T_i|_{\HS_j}$ is either an $\A_r$-unitary or a c.n.u. $\A_r$-contraction for $ 1 \leq i \leq n$ and $1 \leq j \leq 2^n$;
			\item if $\Phi_n$ is the collection of all functions $\phi:\{1, \dotsc, n\} \to \{t_u, t_c\}$, then for every $\phi \in \Phi_n$, there is a unique maximal joint reducing subspace $\HS_\ell (1 \leq \ell \leq 2^n)$ such that the $j$-th component of $(T_1|_{\HS_{\ell}}, \dotsc, T_n|_{\HS_{\ell}})$ is of the type $\phi(j)$ for $1 \leq j \leq n$;
			\item $\HS_1$ is the maximal joint reducing subspace for $T_1, \dotsc, T_n$ such that $T_1|_{\HS_1}, \dotsc, T_n|_{\HS_1}$ are $\A_r$-unitaries;
			\item $\HS_{2^n}$ is the maximal joint reducing subspace for $T_1, \dotsc, T_n$ such that $T_1|_{\HS_{2^n}}, \dotsc, T_n|_{\HS_{2^n}}$ are c.n.u. $\A_r$-contractions.
		\end{enumerate}
 One or more subspaces in the decomposition  may coincide with the subspace $\{0\}$.
	\end{thm}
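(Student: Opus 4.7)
The plan is to prove the theorem by induction on $n$, using Theorem \ref{canonical decomposition} as the base case and Lemma \ref{reducing} as the engine that propagates reducibility across the tuple at each inductive step. The structure mirrors the proof of Theorem \ref{thm305} for $\A_r^n$-unitaries, with the two possible atom types $\{t_u,t_c\}$ playing the role that $\{u,r\}$ played there.

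The case $n=1$ is precisely Theorem \ref{canonical decomposition}. Assuming the conclusion for every family of $k$ doubly commuting $\A_r$-contractions, I would take a doubly commuting $(k+1)$-tuple $(T_1, \dotsc, T_{k+1})$ on $\HS$ and first apply Theorem \ref{canonical decomposition} to $T_{k+1}$ to obtain an orthogonal decomposition $\HS = \mathcal{M}_a \oplus \mathcal{M}_c$ of reducing subspaces for $T_{k+1}$, with $T_{k+1}|_{\mathcal{M}_a}$ an $\A_r$-unitary and $T_{k+1}|_{\mathcal{M}_c}$ a c.n.u. $\A_r$-contraction. Since each $T_i$ with $1 \leq i \leq k$ doubly commutes with $T_{k+1}$, Lemma \ref{reducing} ensures that $\mathcal{M}_a$ and $\mathcal{M}_c$ are joint reducing subspaces for $T_1, \dotsc, T_k$. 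Restricting to each half preserves double commutativity, the spectrum inclusion in $\CA_r$, and von Neumann's inequality, so the tuples $(T_1|_{\mathcal{M}_a}, \dotsc, T_k|_{\mathcal{M}_a})$ and $(T_1|_{\mathcal{M}_c}, \dotsc, T_k|_{\mathcal{M}_c})$ are again doubly commuting tuples of $\A_r$-contractions. The induction hypothesis applied to each half produces $2^k$ joint reducing subspaces on each side, yielding $2^{k+1}$ pieces in total.

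The bijection with $\Phi_n$ is obtained by reading off the binary tree of choices: each piece in the final decomposition carries a label $(\phi(1), \dotsc, \phi(n)) \in \{t_u, t_c\}^n$ recording, for each $i$, whether the $a$-part or the $c$-part was selected at the stage where $T_i$ was decomposed. The extreme leaves of this tree are the subspaces appearing in (4) and (5), corresponding to the constant functions $\phi \equiv t_u$ and $\phi \equiv t_c$. Maximality in (3)--(5), as well as uniqueness, will follow by iterating the maximality and uniqueness clauses of Theorem \ref{canonical decomposition}: at each stage the $a$-part is the largest reducing subspace on which the operator being processed acts as an $\A_r$-unitary, and any competing decomposition is pinned down one coordinate at a time, since the orthogonal sum of those pieces on which $T_{k+1}$ is an $\A_r$-unitary must coincide with $\mathcal{M}_a$ by uniqueness, after which the induction hypothesis handles the remaining $2^k$-fold refinement on each half.

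The one place where I would proceed with care is the claim that the restriction of a c.n.u. $\A_r$-contraction to a further reducing subspace remains c.n.u.; this holds because any non-zero reducing subspace of the restriction on which the operator acts as an $\A_r$-unitary is automatically a reducing subspace of the ambient operator and would contradict the maximality of the $a$-part in $\HS$. Beyond this, I do not foresee a serious obstacle: the conceptual content is entirely contained in Lemma \ref{reducing}, which allows the canonical decomposition to be iterated while preserving the tuple structure, and the remaining work is combinatorial bookkeeping of the $2^n$ branches of the decomposition tree.
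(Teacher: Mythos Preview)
Your proposal is correct and follows essentially the same approach as the paper: induction on $n$ with Theorem \ref{canonical decomposition} as the base case and Lemma \ref{reducing} as the mechanism ensuring the pieces at each step are joint reducing subspaces. The only cosmetic difference is that you split off $T_{k+1}$ first and then apply the induction hypothesis to the two halves, whereas the paper applies the induction hypothesis to $(T_1,\dotsc,T_k)$ first and then decomposes $T_{k+1}$ on each of the $2^k$ pieces; the maximality, uniqueness, and c.n.u.-preservation arguments you outline are exactly the ones the paper uses.
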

		\begin{proof}
		We prove this by mathematical induction on $n$, the number of doubly commuting $\A_r$-contractions. The $n=1$ case follows from Theorem \ref{canonical decomposition}. Here, we give a proof to the $n=2$ case to show and clarify our algorithm to the readers. Assume that $T_1$ and $T_2$ are doubly commuting $\mathbb{A}_r$-contractions acting on $\mathcal{H}$. It follows from Theorem \ref{canonical decomposition} that there is a unique orthogonal decomposition of $\mathcal{H}$ into closed subspaces $\mathcal{H}_a, \mathcal{H}_c$ which reduce $T_1$ such that $T_1|_{\mathcal{H}_a}$ is an $\A_r$-unitary and $T_1|_{\mathcal{H}_c}$ is a c.n.u. $\A_r$-contraction. Note that $T_1$ is an $\mathbb{A}_r$-contraction which doubly commutes with $T_2$ and so, by Lemma \ref{reducing}, $\mathcal{H}_a$ and $\mathcal{H}_c$ reduce $T_2$. Since $T_2|_{\mathcal{H}_{a}}$ is again an $\mathbb{A}_r$-contraction, by Theorem \ref{canonical decomposition}, there is a unique canonical decomposition $\mathcal{H}_{a}=\mathcal{H}_{aa} \oplus \mathcal{H}_{ac}$ such that $\mathcal{H}_{aa}, \mathcal{H}_{ac}$ are closed subspaces reducing $T_2$ to an $\mathbb{A}_r$-unitary and a c.n.u. $\mathbb{A}_r$-contraction respectively. Again, it follows from  Lemma \ref{reducing} that $\mathcal{H}_{aa}, \mathcal{H}_{ac}$ reduce $T_1$. As $T_2|_{\mathcal{H}_{c}}$ is also an $\mathbb{A}_r$-contraction, it follows from Theorem \ref{canonical decomposition} that there is a unique canonical decomposition of $\mathcal{H}_c$ into closed subspaces $\mathcal{H}_{ca}, \mathcal{H}_{cc}$ reducing $T_2$ such that $T_2|_{\mathcal{H}_{ca}}$ is an $\mathbb{A}_r$-unitary and $T_2|_{\mathcal{H}_{cc}}$ is a c.n.u. $\mathbb{A}_r$-contraction. An application of Lemma \ref{reducing} yields that $\mathcal{H}_{ca}, \mathcal{H}_{cc}$ reduce $T_1$. Consequently, we have the following common reducing subspaces of $T_1, T_2$: $
			\mathcal{H}=\mathcal{H}_{aa} \oplus \mathcal{H}_{ac} \oplus \mathcal{H}_{ca} \oplus  \mathcal{H}_{cc}
		$
		such that 
		\begin{enumerate}
			\item $T_1|_{\mathcal{H}_{aa}}, T_1|_{\mathcal{H}_{ac}}, T_2|_{\mathcal{H}_{aa}}$ and $T_2|_{\mathcal{H}_{ca}}$ are $\mathbb{A}_r$-unitaries; 
			\item $T_1|_{\mathcal{H}_{ca}}, T_1|_{\mathcal{H}_{cc}}, T_2|_{\mathcal{H}_{ac}}$ and $T_2|_{\mathcal{H}_{cc}}$ are c.n.u. $\mathbb{A}_r$-contractions.
		\end{enumerate}
Let $\mathcal{H}_{aa}'$ be the largest closed reducing subspace for $T_1, T_2$ on which each $T_j$ acts as an $\A_r$-unitary. Then $\mathcal{H}_{aa}\subseteq \mathcal{H}_{aa}'$. It follows from Theorem \ref{canonical decomposition} that the canonical decomposition for $T_1$ is given by $\mathcal{H}=\mathcal{H}_a^{(1)} \oplus \mathcal{H}_c^{(1)}$, where 
$
\HS_a^{(1)}=\HS_{aa} \oplus \HS_{ac}$ and $ \HS_c^{(2)}=\HS_{ca} \oplus \HS_{cc}$. Since $T_1|_{\mathcal{H}_{aa}'}$ is an $\A_r$-unitary, then Theorem \ref{canonical decomposition} yields that $\HS_{aa}' \subseteq \HS_{a}^{(1)}=\HS_{aa}\oplus \HS_{ac}$. Similarly, using the canonical decomposition for $\A_r$-contraction $T_2$, one can dedcuce that $\HS_{aa}' \subseteq \HS_{aa}\oplus \HS_{ca}$. Hence, $\HS_{aa}' \subseteq (\HS_{aa} \oplus \HS_{ac}) \cap (\HS_{aa} \oplus \HS_{ca})$ and so, $\HS_{aa}' \subseteq \HS_{aa}$. Therefore, $\HS_{aa}$ is indeed the maximal such subspace. Similarly, one can prove the desired maximality for $\HS_{ac}, \HS_{ca}$ and $\HS_{cc}$. Suppose we have another  decomposition of $\HS$ given by
	$
 \mathcal{H}=\mathcal{H}'_{aa}\oplus \mathcal{H}'_{ac}\oplus \mathcal{H}'_{ca}\oplus \mathcal{H}'_{cc}
	$
	such that 
		\begin{enumerate}
		\item $T_1|_{\mathcal{H}_{aa}'}, T_1|_{\mathcal{H}_{ac}'}, T_2|_{\mathcal{H}_{aa}'}$ and $T_2|_{\mathcal{H}_{ca}'}$ are $\mathbb{A}_r$-unitaries; 
		\item $T_1|_{\mathcal{H}_{ca}'}, T_1|_{\mathcal{H}_{cc}'}, T_2|_{\mathcal{H}_{ac}'}$ and $T_2|_{\mathcal{H}_{cc}'}$ are c.n.u. $\mathbb{A}_r$-contractions.
	\end{enumerate}
It follows from Theorem \ref{canonical decomposition} and the uniqueness of the decomposition for $T_1$ that
	\[
	\mathcal{H}_{aa}\oplus \mathcal{H}_{ac}=\mathcal{H}'_{aa}\oplus \mathcal{H}'_{ac} \quad \mbox{and} \quad \mathcal{H}_{ca}\oplus \mathcal{H}_{cc}=\mathcal{H}'_{ca}\oplus \mathcal{H}'_{cc}.
	\]
	Again by Theorem \ref{canonical decomposition} and the uniqueness of the decomposition for $T_2$, we have
	\[
	\mathcal{H}_{aa}\oplus \mathcal{H}_{ca}=\mathcal{H}'_{aa}\oplus \mathcal{H}'_{ca} \quad \mbox{and} \quad  \mathcal{H}_{ac}\oplus \mathcal{H}_{cc}=\mathcal{H}'_{ac}\oplus \mathcal{H}'_{cc}.
	\]
	Combining these, we get $ \mathcal{H}_{aa}\oplus \mathcal{H}_{ac} \oplus \mathcal{H}_{ca} =\mathcal{H}'_{aa}\oplus \mathcal{H}'_{ac} \oplus \mathcal{H}'_{ca}$.
	Taking orthogonal complements we have that $\mathcal{H}_{cc}=\mathcal{H}'_{cc}$. Similarly, we can prove that $\mathcal{H}_{aa}=\mathcal{H}'_{aa}, \mathcal{H}_{ac}=\mathcal{H}'_{ac}$ and $\mathcal{H}_{ca}=\mathcal{H}'_{ca}$. Hence, the desired decomposition holds for $n=2$ and is unique. Assume that the statement holds for all $n$ up to $k$. We now prove the $n=k+1$ case. Let $T_1, \dotsc, T_k, T_{k+1}$ be doubly commuting $\A_r$-contractions on $\mathcal{H}$. Consider the collection of operator tuples 
	\[
	\{(T_1|_{\mathcal{H}_j}, \dotsc, T_k|_{\mathcal{H}_j}, \; T_{k+1}|_{\mathcal{H}_j}) \ : \  1 \leq j \leq 2^k\}
	\]
	and take any member, say
	$
	(T_1|_{\mathcal{H}_j}, \dotsc, T_k|_{\mathcal{H}_j}, \; T_{k+1}|_{\mathcal{H}_j})
	$ 
 from this collection. By induction hypothesis, each $T_i|_{\mathcal{H}_j} (1 \leq i \leq k)$ is either an $\A_r$-unitary or a c.n.u. $\A_r$-contraction. By Lemma \ref{reducing}, each $\HS_j$ reduces $T_{k+1}$. We apply Theorem \ref{canonical decomposition} on $T_{k+1}|_{\mathcal{H}_j}$. Consequently, $\mathcal{H}_j$ can be written as an orthogonal sum of two reducing subspaces for $T_{k+1}$, say, $\mathcal{H}_{ja}, \mathcal{H}_{jc}$ such that $T_{k+1}|_{\mathcal{H}_{ja}}$ is an $\A_r$-unitary and $T_{k+1}|_{\mathcal{H}_{jc}}$ is a c.n.u. $\A_r$-contraction. Again, by Lemma \ref{reducing}, the subspaces $\mathcal{H}_{ja}, \mathcal{H}_{jc}$ reduce each of $T_1|_{\mathcal{H}_{j}}, \dotsc, T_k|_{\mathcal{H}_{j}}$. Hence, each tuple 
	$
	(T_1|_{\mathcal{H}_j}, \dotsc, T_k|_{\mathcal{H}_j},  T_{k+1}|_{\mathcal{H}_j})
	$
	orthogonally decomposes into two parts. Continuing this way, for every tuple in the collection, we get $2^{k+1}$ orthogonal subspaces and the other parts of the theorem follow from the induction hypothesis. The proof is now complete.
	\end{proof}
	Needless to mention that the canonical decomposition of an $\A_r$-isometry $V$ is nothing but the Wold decomposition of $V$. Therefore, as a particular case of the canonical decomposition for doubly commuting $\A_r$-contractions described above, we have the following Wold-type decomposition for a finite family of doubly commuting $\mathbb{A}_r$-isometries. Naturally, we also skip the proof.
	\begin{thm}\label{prop603}
Assume that $(V_1, \dotsc, V_n)$ is a tuple of doubly commuting $\mathbb{A}_r$-isometries acting on a Hilbert space $\mathcal{H}$. Then there exists a unique decomposition of $\HS$ into an orthogonal sum of $2^n$ subspaces $\HS_1, \dotsc, \HS_{2^n}$ of $\HS$ such that
	\begin{enumerate}
		\item every $\HS_j
		, 1 \leq j \leq 2^n$, is a joint reducing subspace for $V_1, \dotsc, V_n$ ;
		\item $V_i|_{\HS_j}$ is either an $\A_r$-unitary or a pure $\A_r$-isometry  for $ 1 \leq i \leq n$ and $1 \leq j \leq 2^n$;
		\item if $\Phi_n$ is the collection of all functions $\phi:\{1, \dotsc, n\} \to \{t_u, t_c\}$, then for every $\phi \in \Phi_n$, there is a unique maximal joint reducing subspace $\HS_\ell (1 \leq \ell \leq 2^n)$ such that the $j$-th component of $(V_1|_{\HS_{\ell}}, \dotsc, V_n|_{\HS_{\ell}})$ is of the type $\phi(j)$ for $1 \leq j \leq n$;
		\item $\HS_1$ is the maximal joint reducing subspace for $V_1, \dotsc, V_n$ such that $V_1|_{\HS_1}, \dotsc, V_n|_{\HS_1}$ are $\A_r$-unitaries;
		\item $\HS_{2^n}$ is the maximal joint reducing subspace for $V_1, \dotsc, V_n$ such that $V_1|_{\HS_{2^n}}, \dotsc, V_n|_{\HS_{2^n}}$ are pure $\A_r$-isometries.
	\end{enumerate}
One or members of the subspaces in the decomposition may be $\{0\}$.
	\end{thm}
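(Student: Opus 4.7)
The plan is to deduce Theorem \ref{prop603} as an immediate corollary of the canonical decomposition Theorem \ref{thm602}. Since every $\mathbb{A}_r$-isometry is in particular an $\mathbb{A}_r$-contraction (see Remark \ref{implications}), the doubly commuting tuple $(V_1,\dotsc,V_n)$ of $\mathbb{A}_r$-isometries is a doubly commuting tuple of $\mathbb{A}_r$-contractions. Theorem \ref{thm602} then furnishes a unique orthogonal decomposition $\mathcal{H}=\HS_1\oplus\dotsb\oplus\HS_{2^n}$ into joint reducing subspaces such that each $V_i|_{\HS_j}$ is either an $\mathbb{A}_r$-unitary or a c.n.u. $\mathbb{A}_r$-contraction, together with the complete list of maximality and uniqueness assertions.

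The only work left is to upgrade ``c.n.u. $\mathbb{A}_r$-contraction'' to ``pure $\mathbb{A}_r$-isometry'' on each block. The first step is to observe that the restriction of an $\mathbb{A}_r$-isometry $V_i$ to a closed reducing subspace $\HS_j$ is again an $\mathbb{A}_r$-isometry. This follows directly from the algebraic characterization in Theorem \ref{thm407}: the identity
\[
V_i^{*}V_i+(rV_i^{-1})^{*}rV_i^{-1}=(1+r^2)I
\]
is inherited by $V_i|_{\HS_j}$, because $\HS_j$ reduces $V_i$ and hence also $V_i^{-1}$ by Lemma \ref{lem412}, so the restriction $V_i|_{\HS_j}$ is invertible on $\HS_j$ and satisfies the same operator identity. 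Hence on each $\HS_j$ every $V_i|_{\HS_j}$ is simultaneously an $\mathbb{A}_r$-isometry (by inheritance) and either an $\mathbb{A}_r$-unitary or a c.n.u. $\mathbb{A}_r$-contraction (by Theorem \ref{thm602}).

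Next, I would note that an $\mathbb{A}_r$-isometry which is also a c.n.u. $\mathbb{A}_r$-contraction is, by definition, precisely a pure $\mathbb{A}_r$-isometry: both notions demand the nonexistence of a nonzero closed reducing subspace on which the operator acts as an $\mathbb{A}_r$-unitary. This gives clause (2). Clauses (3), (4) and (5), together with the uniqueness of the decomposition, transfer verbatim from Theorem \ref{thm602}, since a joint reducing subspace on which all $V_i$ act as pure $\mathbb{A}_r$-isometries is exactly a joint reducing subspace on which all $V_i$ act as c.n.u. $\mathbb{A}_r$-contractions, and the type labels $t_u,t_c$ used in both theorems coincide under this identification.

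I do not expect a genuine obstacle: the proof is essentially a translation of Theorem \ref{thm602} into the $\mathbb{A}_r$-isometric language, with the one nontrivial verification being the stability of the class of $\mathbb{A}_r$-isometries under restriction to a reducing subspace. That verification is handled cleanly by combining Theorem \ref{thm407} with Lemma \ref{lem412}, so the entire argument is a short corollary rather than a new decomposition.
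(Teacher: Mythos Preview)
Your proposal is correct and matches the paper's approach exactly: the paper states that Theorem \ref{prop603} is a particular case of Theorem \ref{thm602} (since the canonical decomposition of an $\mathbb{A}_r$-isometry coincides with its Wold decomposition) and skips the proof entirely. Your write-up in fact supplies more detail than the paper does, correctly noting via Theorem \ref{thm407} and Lemma \ref{lem412} that the $\mathbb{A}_r$-isometry property passes to reducing subspaces, and that an $\mathbb{A}_r$-isometry which is a c.n.u.\ $\mathbb{A}_r$-contraction is by definition a pure $\mathbb{A}_r$-isometry.
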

In Theorem {Levan type decomposition}, we have seen that a c.n.u. $\A_r$-contraction further admits an orthogonal decomposition into an $\A_r$-isometry and a c.n.i. $\A_r$-contraction. This is Levan-type decomposition for a c.n.u. $\A_r$-contraction. We now find a generalization of this result for a finite family of doubly commuting $\A_r$-contractions. Indeed, we show that the tuple $(T_1|_{\mathcal{H}_{2^n}}, \dotsc, T_n|_{\mathcal{H}_{2^n}})$ as in Theorem \ref{thm602}, can further be decomposed into $2^n$ orthogonal parts by applying an argument similar to that in the proof of Theorem \ref{thm602}. We obtain this result by an application of the following lemma. 
\begin{lem}\label{Levan_reduce}
	Let $T_1, T_2$ be doubly commuting $\A_r$-contractions acting on a Hilbert space $\mathcal{H}$ and let $T_1$ be a c.n.u. $\A_r$-contraction. If $\mathcal{H}=\mathcal{H}_1 \oplus \mathcal{H}_2$ is the orthogonal decomposition for $T_1$ as in Theorem \ref{Levan type decomposition}, then $\mathcal{H}_1, \mathcal{H}_2$ reduce $T_2$. 
\end{lem}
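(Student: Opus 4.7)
My plan is to translate the defining property of $\mathcal{H}_1$ from Theorem \ref{Levan type decomposition} into an algebraic kernel condition via Theorem \ref{thm407}, and then exploit the doubly commuting hypothesis. Since $T_1$ is an $\A_r$-contraction, $\sigma(T_1) \subseteq \CA_r$ and $0 \notin \CA_r$, so $T_1$ is invertible. Setting $A(V) := -V^{*2}V^2 + (1+r^2)V^*V - r^2 I$, Theorem \ref{thm407} says that an invertible operator $V$ is an $\A_r$-isometry precisely when $A(V) = 0$. I would first note that $A(T_1)$ is self-adjoint and that, because the doubly commuting hypothesis forces $T_2$ and $T_2^*$ to commute with each of $T_1$ and $T_1^*$, both $T_2$ and $T_2^*$ commute with every polynomial in $T_1$ and $T_1^*$, in particular with $A(T_1)$. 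Hence $\mathcal{K} := \ker A(T_1)$ is a closed subspace that reduces $T_2$.

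The core step is the identification of $\mathcal{H}_1$ as the largest closed $T_1$-reducing subspace contained in $\mathcal{K}$. The inclusion $\mathcal{H}_1 \subseteq \mathcal{K}$ is immediate because $T_1|_{\mathcal{H}_1}$ is an $\A_r$-isometry, so $A(T_1)x = A(T_1|_{\mathcal{H}_1})x = 0$ for every $x \in \mathcal{H}_1$. Conversely, for any closed $T_1$-reducing subspace $\mathcal{N} \subseteq \mathcal{K}$, Lemma \ref{lem412} ensures $T_1|_{\mathcal{N}}$ is invertible, and $A(T_1|_{\mathcal{N}}) = A(T_1)|_{\mathcal{N}} = 0$; Theorem \ref{thm407} then says $T_1|_{\mathcal{N}}$ is an $\A_r$-isometry, so the maximality property of $\mathcal{H}_1$ from Theorem \ref{Levan type decomposition} forces $\mathcal{N} \subseteq \mathcal{H}_1$.

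To conclude, I would set $\mathcal{M} := \overline{T_2 \mathcal{H}_1}$. Since $\mathcal{K}$ is closed and $T_2$-invariant, $\mathcal{M} \subseteq \mathcal{K}$. For any $x \in \mathcal{H}_1$, the doubly commuting relations give $T_1 T_2 x = T_2 T_1 x \in T_2 \mathcal{H}_1$ and $T_1^* T_2 x = T_2 T_1^* x \in T_2 \mathcal{H}_1$ (using that $\mathcal{H}_1$ reduces $T_1$), so passing to closures, $\mathcal{M}$ is invariant under both $T_1$ and $T_1^*$ and hence reduces $T_1$. The characterization from the previous paragraph then forces $\mathcal{M} \subseteq \mathcal{H}_1$, giving $T_2 \mathcal{H}_1 \subseteq \mathcal{H}_1$. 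An identical argument with $T_2^*$ in place of $T_2$ yields $T_2^* \mathcal{H}_1 \subseteq \mathcal{H}_1$, so $\mathcal{H}_1$ reduces $T_2$, and consequently so does its orthogonal complement $\mathcal{H}_2$. The main subtlety to be handled is that $\ker A(T_1)$ is not itself $T_1$-reducing (unlike in Lemma \ref{reducing}, where the relevant kernels commute with $T_1$), so the argument must pass through the maximal $T_1$-reducing subspace inside $\ker A(T_1)$ rather than working with $\ker A(T_1)$ directly.
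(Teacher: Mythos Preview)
Your proof is correct and follows essentially the same route as the paper's: both arguments show that $\overline{T_2\mathcal{H}_1}$ is a closed $T_1$-reducing subspace on which the polynomial identity $-T_1^{*2}T_1^2+(1+r^2)T_1^*T_1-r^2I=0$ holds, invoke Theorem \ref{thm407} to deduce that $T_1$ restricts there to an $\mathbb{A}_r$-isometry, and conclude via the maximality of $\mathcal{H}_1$. Your explicit identification of $\mathcal{H}_1$ as the largest $T_1$-reducing subspace of $\ker A(T_1)$ is a clean repackaging of the paper's use of the family $\Lambda$, but the substance is the same.
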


\begin{proof}
Let $\Lambda$ be the collection of all closed subspaces of $\mathcal{H}$  that reduces $T_1$ to an $\A_r$-isometry. It follows from the proof of Theorem \ref{Levan type decomposition} that $\mathcal{H}_1$ is the closure of span of $\Lambda$. For any subspace $\mathcal{L} \in \Lambda$, we show that $\overline{T_2\mathcal{L}} \subseteq \HS_1$. Note that 
$
T_1(T_2\mathcal{L})=T_2(T_1\mathcal{L}) \subseteq T_2\mathcal{L}$ and $  T_1^*(T_2\mathcal{L})=T_2(T_1^*\mathcal{L}) \subseteq T_2\mathcal{L},$
where we used the facts that $\mathcal{L}$ is a reducing subspace for $T_1$ and $T_1^*T_2=T_2T_1^*$. By continuity, $\overline{T_2\mathcal{L}}$ is a reducing subspace for $T_1$. For any $x \in \mathcal{L}$, it follows from Theorem \ref{thm407} that 
$
-T_1^{*2}T_1^2x+(1+r^2)T_1^*T_1x-r^2x=0$ and so, $-T_1^{*2}T_1^2(T_2x)+(1+r^2)T_1^*T_1(T_2x)-r^2T_2x=0$.
Again by continuity, $-T_1^{*2}T_1^2y+(1+r^2)T_1^*T_1y-r^2y=0$ for all $y \in \overline{T_2\mathcal{L}}$. Consequently, Theorem \ref{thm407} yields that $T_1|_{\overline{T_2\mathcal{L}}}$ is an $\A_r$-isometry. Again, it follows from the proof of Theorem \ref{Levan type decomposition} that $\mathcal{H}_1$ is the largest closed reducing subspace of $\mathcal{H}$ which reduces $T_1$ to an $\A_r$-isometry and so, $\overline{T_2\mathcal{L}} \subseteq \mathcal{H}_1$. Since $\mathcal{L} \in \Lambda$ is arbitrary, we get $T_2(\text{span} \ \Lambda) \subseteq \mathcal{H}_1$. Using continuity arguments, one can show that $T_2\mathcal{H}_1 \subseteq \mathcal{H}_1$. Similarly, it can be proved that $T_2^*\mathcal{H}_1 \subseteq \mathcal{H}_1$. Therefore, $\mathcal{H}_1$ reduces $T_2$ and then, so does $\mathcal{H}_2$. The proof is now complete. 
\end{proof}
We shall use the following terminologies in the rest of the paper.	

\begin{defn} \label{defn:063}
	Let $T$ be an $\A_r$-contraction acting on a Hilbert space $\HS$. Then
	\begin{enumerate}
	\item T is a \textit{fundamental c.n.u. $\A_r$-contraction} if either $T$ is a pure $\A_r$-isometry or a c.n.i. $\A_r$-contraction;
	
		\item[(i)] $T$ is a \textit{fundamental c.n.u. $\A_r$-contraction} of \textit{type $t_p$} if $T$ is a pure $\A_r$-isometry;
		
		\item[(iii)] $T$ is a \textit{fundamental c.n.u. $\A_r$-contraction} of \textit{type $t_{cni}$} if $T$ is a c.n.i. $\A_r$-contraction;
		
		\item $T$ is a \textit{non-fundamental c.n.u. $\A_r$-contraction} if $T$ is a c.n.u. $\A_r$-contraction which is neither a pure $\A_r$-isometry nor a c.n.i. $\A_r$-contraction.
		
	\end{enumerate}	
\end{defn}

We now reach another main result of this Section, a canonical decomposition of a finite family of doubly commuting $\A_r$-contractions.

	\begin{thm}\label{thm605}
		Let $(T_1, \dotsc, T_n)$ be a doubly commuting tuple of c.n.u. $\mathbb{A}_r$-contractions acting on a Hilbert space $\mathcal{H}$. Then there exists a unique decomposition of $\HS$ into an orthogonal sum of $2^n$ subspaces $\HS_1, \dotsc, \HS_{2^n}$ of $\HS$ such that
\begin{enumerate}
	\item every $\HS_j
	, 1 \leq j \leq 2^n$, is a joint reducing subspace for $T_1, \dotsc, T_n$ ;
	\item $T_i|_{\HS_j}$ is either a pure $\A_r$-isometry or a c.n.i. $\A_r$-contraction for $ 1 \leq i \leq n$ and $1 \leq j \leq 2^n$;
	\item if $\mathscr{X}_n$ is the collection of all functions $\chi:\{1, \dotsc, n\} \to \{t_p, t_{cni}\}$, then for every $\chi \in \mathscr{X}_n$, there is a unique maximal joint reducing subspace $\HS_m (1 \leq m \leq 2^n)$ such that the $j$-th component of $(T_1|_{\HS_m}, \dotsc, T_n|_{\HS_m})$ is of the type $\chi(j)$ for $1 \leq j \leq n$.
\end{enumerate}
 One or more subspaces in the decomposition  may coincide with the subspace $\{0\}$.
\end{thm}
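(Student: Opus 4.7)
The plan is to mimic the inductive algorithm of the proof of Theorem \ref{thm602}, with Theorem \ref{Levan type decomposition} replacing Theorem \ref{canonical decomposition} as the base decomposition of one operator and Lemma \ref{Levan_reduce} replacing Lemma \ref{reducing} as the tool that transfers reducing subspaces across doubly commuting operators. We proceed by induction on $n$, the number of doubly commuting c.n.u.\ $\mathbb{A}_r$-contractions.

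For $n=1$, Theorem \ref{Levan type decomposition} is precisely the desired decomposition. Suppose the result holds for some $k \geq 1$, and let $(T_1,\dotsc,T_k,T_{k+1})$ be a doubly commuting tuple of c.n.u.\ $\mathbb{A}_r$-contractions on $\mathcal{H}$. By the induction hypothesis applied to $(T_1,\dotsc,T_k)$, we obtain an orthogonal decomposition $\mathcal{H}=\mathcal{H}_1\oplus\dotsb\oplus\mathcal{H}_{2^k}$ into joint reducing subspaces on which each $T_i|_{\mathcal{H}_j}$ ($1\leq i\leq k$) is either a pure $\mathbb{A}_r$-isometry or a c.n.i.\ $\mathbb{A}_r$-contraction. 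Each $\mathcal{H}_j$ is built iteratively from the Levan-type subspaces of $T_1,\dotsc,T_k$, and since $T_{k+1}$ doubly commutes with every $T_i$, a repeated application of Lemma \ref{Levan_reduce} yields that every $\mathcal{H}_j$ reduces $T_{k+1}$ as well.

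On each $\mathcal{H}_j$, the operator $T_{k+1}|_{\mathcal{H}_j}$ remains a c.n.u.\ $\mathbb{A}_r$-contraction: the $\mathbb{A}_r$-contractivity passes to any reducing subspace (rational functions in $T_{k+1}|_{\mathcal{H}_j}$ are restrictions of rational functions in $T_{k+1}$), and any closed subspace of $\mathcal{H}_j$ that reduces $T_{k+1}|_{\mathcal{H}_j}$ to an $\mathbb{A}_r$-unitary would reduce $T_{k+1}$ to an $\mathbb{A}_r$-unitary, contradicting the c.n.u.\ hypothesis on $T_{k+1}$. Therefore Theorem \ref{Levan type decomposition} produces a Levan-type decomposition $\mathcal{H}_j=\mathcal{H}_{j,p}\oplus\mathcal{H}_{j,cni}$ into closed reducing subspaces of $T_{k+1}$ such that $T_{k+1}|_{\mathcal{H}_{j,p}}$ is a pure $\mathbb{A}_r$-isometry and $T_{k+1}|_{\mathcal{H}_{j,cni}}$ is a c.n.i.\ $\mathbb{A}_r$-contraction. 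Invoking Lemma \ref{Levan_reduce} once more within $\mathcal{H}_j$, now with $T_{k+1}|_{\mathcal{H}_j}$ playing the role of the c.n.u.\ $\mathbb{A}_r$-contraction and each $T_i|_{\mathcal{H}_j}$ ($1\leq i\leq k$) playing the role of the doubly commuting partner, we conclude that $\mathcal{H}_{j,p}$ and $\mathcal{H}_{j,cni}$ are joint reducing subspaces for the full tuple. Iterating over $1\leq j\leq 2^k$ produces the desired $2^{k+1}$ joint reducing subspaces, each labelled by a function $\chi:\{1,\dotsc,k+1\}\to\{t_p,t_{cni}\}$.

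For the maximality assertion in (3), fix $\chi \in \mathscr{X}_n$ and let $\mathcal{H}_\chi$ denote the subspace produced by the construction. If $\mathcal{K}$ is any joint reducing subspace on which $T_i$ acts with type $\chi(i)$ for every $i$, then for each $i$ separately the uniqueness in Theorem \ref{Levan type decomposition} (applied to the c.n.u.\ $\mathbb{A}_r$-contraction $T_i$) forces $\mathcal{K}$ to sit inside the Levan-type subspace of $T_i$ prescribed by $\chi(i)$; intersecting over $i$ gives $\mathcal{K}\subseteq\mathcal{H}_\chi$. The uniqueness of the overall decomposition follows from the same intersection argument, exactly as in the final paragraph of the proof of Theorem \ref{thm602}. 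The main conceptual step that could have obstructed the argument is the reduction of $T_2,\dotsc,T_n$ by the Levan subspaces of $T_1$, but this is precisely the content of Lemma \ref{Levan_reduce}; once it is in hand, the induction runs by the same combinatorial algorithm used in Theorem \ref{thm602}.
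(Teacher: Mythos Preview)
Your proof is correct and follows exactly the approach the paper takes: the paper's own proof of this theorem is a single sentence stating that one applies the algorithm of Theorem~\ref{thm602} with Lemma~\ref{Levan_reduce} in place of Lemma~\ref{reducing}, and you have faithfully (and more explicitly) carried that out. Your added remarks on why $T_{k+1}|_{\mathcal{H}_j}$ remains a c.n.u.\ $\mathbb{A}_r$-contraction and on the maximality/uniqueness via intersections are the natural elaborations of that sketch.
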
		
\begin{proof}
	We apply the same algorithm as in the proof of Theorem \ref{thm602} and apply Lemma \ref{Levan_reduce} repeatedly to reach the desired decomposition.
\end{proof}	

\vspace{0.2cm}
	\section{A different canonical decomposition for commuting $\A_r$-contractions}\label{commuting}
	
	\vspace{0.2cm}
	
	\noindent As we have mentioned before that we do not expect to have orthogonal decompositions as in Theorems \ref{thm602} \& \ref{prop603} under commutativity assumption only. Indeed, we have seen that double commutativity plays a crucial role in determining those common reducing subspaces. In this Section, we shall obtain a different type of canonical decomposition for a finite family of commuting $\A_r$-contractions. We are motivated by the fine work of Burdak \cite{BurdakIII}, where he removed the double commutativity assumption on a finite family of commuting contractions and determined their explicit structure by exhibiting a certain canonical decomposition. We explain the contrast taking two contractions at a time. For a doubly commuting pair of contractions $(T_1, T_2)$ acting on a Hilbert space $\HS$, there is a unique decomposition of $\mathcal{H}$ into an orthogonal sum of subspaces $\HS_{uu}, \HS_{uc}, \HS_{cu}, \HS_{cc}$ reducing $T_1, T_2$ 	such that 
	\begin{enumerate}
		\item $T_1|_{\mathcal{H}_{uu}},  T_2|_{\mathcal{H}_{uu}}$ are unitaries; 
		
		\item $T_1|_{\mathcal{H}_{uc}}$ is a unitary and $T_2|_{\mathcal{H}_{uc}}$ is a c.n.u. contraction;
		 \item $T_1|_{\mathcal{H}_{cu}}$ is a c.n.u. contraction and $T_2|_{\mathcal{H}_{cu}}$ is a unitary;
		\item $ T_1|_{\mathcal{H}_{cc}} $ and $T_2|_{\mathcal{H}_{cc}}$ are c.n.u. contractions.
	\end{enumerate} 
An interested reader is referred to refer \cite{Slocinski, Slocinski-I} for further details. On the other hand, Burdak \cite{BurdakIII} assumed only commutativity of $T_1, T_2$ and obtained closed joint reducing subspaces $\HS_{uu}, \HS_{uc}, \HS_{cu}, \HS'$ of $T_1, T_2$ such that $(1),\,(2)$ and $(3)$ out of the above four conclusions hold.
However, $T_1, T_2$ need not be c.n.u. contractions on the orthogonal complement of $\HS_{uu} \oplus \HS_{uc} \oplus \HS_{cu}$. See Theorem 2.1 in \cite{BurdakIII} for a proof to this result. It turns out that $\HS'=\HS \ominus (\HS_{uu} \oplus \HS_{uc} \oplus \HS_{cu})$ is a joint reducing subspace for $T_1,T_2$ such that if $\mathcal{L}$ is a proper closed subspace of $\HS'$ reducing $T_1, T_2$ and either of $T_1|_\mathcal{L}$ or $T_2|_\mathcal{L}$ is a unitary, then $\mathcal{L}=\{0\}$.  Burdak called such a pair a \textit{strongly completely nonunitary (or simply strongly c.n.u.) pair}. Taking cue from this work, we find a similar orthogonal decomposition for commuting $\A_r$-contractions. We begin with the definitions of analogues of these special classes for $\A_r$-contractions.

	\begin{defn}
		A tuple $(T_1, \dotsc, T_n)$ of commuting $\mathbb{A}_r$-contractions acting on Hilbert space $\mathcal{H}$ is said to be a 
		\begin{enumerate}
			\item \textit{strongly c.n.u. tuple of $\mathbb{A}_r$-contractions} if there is no non-zero closed subspace $\mathcal{L}$ of $\mathcal{H}$ reducing each $T_j$ such that at least $(n-1)$ among $T_1|_\mathcal{L}, \dotsc, T_n|_\mathcal{L}$ are $\mathbb{A}_r$-unitaries.
			
			\smallskip
			
			\item \textit{strongly pure $\mathbb{A}_r$-isometric tuple} if each $T_j$ is an $\A_r$-isometry and there is no non-zero closed subspace $\mathcal{L}$ of $\mathcal{H}$ reducing each $T_j$ such that at least $(n-1)$ among $T_1|_\mathcal{L}, \dotsc, T_n|_\mathcal{L}$ are $\mathbb{A}_r$-unitaries.
		\end{enumerate}
		
	\end{defn}
	
\begin{thm}\label{thm702}
Let $(T_1, \dotsc, T_n)$ be a tuple of commuting  $\A_r$-contractions acting on a Hilbert space $\mathcal{H}$. Then there corresponds a unique decomposition of $\mathcal{H}$ into an orthogonal sum of $(n+2)$ subspaces $\HS_1, \dotsc, \HS_{n+1}, \HS(s)$ of $\HS$ such that
\begin{enumerate}
\item $\HS_1, \dotsc, \HS_{n+1}, \HS(s)$ are joint reducing subspaces for $T_1, \dotsc, T_n$;
\item $T_i|_{\HS_j}$ is either an $\A_r$-unitary or a c.n.u. $\A_r$-contraction for $ 1 \leq i \leq n$ and $1 \leq j \leq n+1$;
\item If $\widetilde{\Phi_n}$ is the collection of all functions $\phi: \{1, \dotsc, n\} \to \{t_u, t_c\}$ such that there are at least $(n-1)$ values as $t_u$ among $\phi(1), \dotsc, \phi(n)$ , then corresponding to every $\phi \in \widetilde{\Phi_n}$, there is a unique maximal joint reducing subspace $\HS_\ell ( 1 \leq \ell \leq n+1)$ such that the $j$-th component of $(T_1|_{\HS_{\ell}}, \dotsc, T_n|_{\HS_{\ell}})$ is of the type $\phi(j)$ for $1 \leq j \leq n$;
\item $(T_1|_{\mathcal{H}(s)}, \dotsc, T_n|_{\mathcal{H}(s)})$ is a strongly c.n.u. tuple of $\A_r$-contractions.
\end{enumerate}
One or more subspaces in the decomposition  may coincide with the subspace $\{0\}$.
\end{thm}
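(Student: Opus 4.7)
The plan is to mirror the strategy of Theorem \ref{thm602}, but to collect all the obstruction into a single subspace $\HS(s)$, and to use the Fuglede--Putnam theorem locally on subspaces where double commutativity is only available there. For each $\phi \in \widetilde{\Phi_n}$, let $\mathcal{F}_\phi$ denote the family of closed joint reducing subspaces $\mathcal{M} \subseteq \HS$ for which the type of $T_i|_\mathcal{M}$ agrees with $\phi(i)$ for every $i$. I would define $\HS_\ell$ to be the closed linear span of all $\mathcal{M} \in \mathcal{F}_{\phi_\ell}$ (where $\phi_\ell$ indexes the $\ell$-th element of $\widetilde{\Phi_n}$), and set $\HS(s) = \HS \ominus (\HS_1 \oplus \cdots \oplus \HS_{n+1})$.

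The first key step is to show $\mathcal{F}_\phi$ is closed under closures of finite sums; this guarantees $\HS_\ell \in \mathcal{F}_{\phi_\ell}$ and is its maximal element. Given $\mathcal{M}_1, \mathcal{M}_2 \in \mathcal{F}_\phi$, set $\mathcal{M} = \overline{\mathcal{M}_1 + \mathcal{M}_2}$. For each $i$ with $\phi(i) = t_u$, the identities $T_i^* T_i = T_i T_i^*$ and $(I - T_i^* T_i)(T_i^* T_i - r^2 I) = 0$ from Proposition \ref{prop304} extend from each summand to $\mathcal{M}$ by linearity and continuity, so $T_i|_\mathcal{M}$ is an $\A_r$-unitary. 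For the unique $j$ (if any) with $\phi(j) = t_c$, I would apply Theorem \ref{canonical decomposition} to $T_j|_\mathcal{M}$, decomposing $\mathcal{M} = \mathcal{M}_a \oplus \mathcal{M}_c$, and argue that $\mathcal{M}_c$ is the \emph{maximal} reducing subspace of $\mathcal{M}$ on which $T_j$ is c.n.u. This maximality (an analogue of the classical fact for contractions, proved via a projection argument: for a reducing subspace $\mathcal{N} \subseteq \mathcal{M}$ with $T_j|_\mathcal{N}$ c.n.u., decomposing any $h \in \mathcal{M}_a = \mathcal{M}_u \oplus \mathcal{M}_r$ along $\mathcal{N} \oplus \mathcal{N}^\perp$ and using the equalities $\|T_j^k h\| = \|h\|$ for $h \in \mathcal{M}_u$ and $\|(rT_j^{-1})^k h\| = \|h\|$ for $h \in \mathcal{M}_r$ from the kernel formulas in Theorem \ref{canonical decomposition}, force the $\mathcal{N}$-component into the $\A_r$-unitary part of $T_j|_\mathcal{N}$, which is trivial) then gives $\mathcal{M}_1, \mathcal{M}_2 \subseteq \mathcal{M}_c$, whence $\mathcal{M} = \mathcal{M}_c$ and $T_j|_\mathcal{M}$ is c.n.u.

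Orthogonality of $\HS_1, \dots, \HS_{n+1}$ follows from the canonical decomposition of each $T_j$ on $\HS$: any two distinct $\phi_\ell, \phi_m \in \widetilde{\Phi_n}$ differ on some coordinate $j$, placing one of the corresponding subspaces in the $\A_r$-unitary part of $T_j$ and the other in the c.n.u.\ part, which are orthogonal. Joint reducibility of $\HS(s)$ is automatic from that of each $\HS_\ell$. To verify that $(T_1|_{\HS(s)}, \dots, T_n|_{\HS(s)})$ is strongly c.n.u., I would assume for contradiction a nonzero closed joint reducing $\mathcal{L} \subseteq \HS(s)$ with $T_i|_\mathcal{L}$ an $\A_r$-unitary for every $i \neq j$ (some $j$). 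The crux is that each normal $T_i|_\mathcal{L}$ ($i \neq j$) commutes with $T_j|_\mathcal{L}$, so the Fuglede--Putnam theorem applied on $\mathcal{L}$ yields that $T_i|_\mathcal{L}$ and $T_j|_\mathcal{L}$ doubly commute on $\mathcal{L}$. Lemma \ref{reducing} applied locally within $\mathcal{L}$ then shows that the canonical decomposition $\mathcal{L} = \mathcal{L}_a \oplus \mathcal{L}_c$ of $T_j|_\mathcal{L}$ reduces every $T_i|_\mathcal{L}$, hence every $T_i$ on $\HS$. On $\mathcal{L}_a$ every $T_i$ acts as an $\A_r$-unitary, so $\mathcal{L}_a \subseteq \HS_1$; similarly $\mathcal{L}_c \subseteq \HS_{j+1}$. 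Both being contained in $\HS(s)$ forces $\mathcal{L}_a = \mathcal{L}_c = \{0\}$, a contradiction.

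Uniqueness follows from the maximality built into each $\HS_\ell$: any alternative decomposition produces subspaces belonging to the appropriate $\mathcal{F}_{\phi_\ell}$, hence contained in $\HS_\ell$, with the reverse inclusion by symmetry, and this pins down $\HS(s)$ as the remaining orthogonal complement. The principal technical obstacle is the closure property of $\mathcal{F}_\phi$ when $\phi$ contains $t_c$, which depends on the maximality of the c.n.u.\ part in Theorem \ref{canonical decomposition}; the Fuglede--Putnam reduction to local double commutativity on $\mathcal{L}$ is the conceptual move that enables Lemma \ref{reducing} to be applied there even though $T_1, \dots, T_n$ are not assumed to doubly commute on $\HS$.
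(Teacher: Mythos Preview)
Your argument is correct and takes a genuinely different route from the paper's proof. The paper first passes to the maximal joint reducing subspace $\mathcal{H}^{dc}$ on which the tuple is doubly commuting, invokes the full doubly commuting decomposition (Theorem~\ref{thm602}) there to obtain $2^n$ pieces, and then uses Fuglede's theorem to identify the $n+1$ relevant pieces with the maximal subspaces of the prescribed types in all of~$\mathcal{H}$. You instead construct each $\mathcal{H}_\ell$ directly as the closed span of $\mathcal{F}_{\phi_\ell}$ and verify membership in $\mathcal{F}_{\phi_\ell}$ by hand: the $t_u$ coordinates via the operator identities of Proposition~\ref{prop304} (which pass to closed spans by linearity and continuity), and the single $t_c$ coordinate via your ``maximality of the c.n.u.\ part'' observation. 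That observation---that any reducing subspace on which $T_j$ is c.n.u.\ must lie in the c.n.u.\ summand of the canonical decomposition---is not stated explicitly in the paper but is exactly what makes your direct construction work; your parenthetical proof of it (splitting $h\in\mathcal{M}_a$ along $\mathcal{N}\oplus\mathcal{N}^\perp$ and using that $T_j$ and $rT_j^{-1}$ are contractions to force equality componentwise) is sound. For the strongly c.n.u.\ conclusion on $\mathcal{H}(s)$ both proofs converge on the same idea: Fuglede applied on $\mathcal{L}$ yields local double commutativity, after which Lemma~\ref{reducing} and Theorem~\ref{canonical decomposition} split $\mathcal{L}$ into pieces that land in $\mathcal{H}_1$ and some $\mathcal{H}_\ell$. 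Your approach is more self-contained in that it avoids the detour through $\mathcal{H}^{dc}$ and does not require the full $2^n$-piece Theorem~\ref{thm602}; the paper's approach, on the other hand, makes transparent that each $\mathcal{H}_\ell$ actually sits inside $\mathcal{H}^{dc}$, a structural fact your proof obtains only implicitly. One minor point: your phrase ``closed under closures of finite sums'' undersells what you actually need and prove---the same argument (applying the canonical decomposition of $T_j$ on the full closed span and using your maximality observation for each $\mathcal{M}\in\mathcal{F}_{\phi_\ell}$) handles arbitrary families directly, and you should state it that way.
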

\begin{proof}
Let us consider the space
\[
\mathcal{H}^{dc}=\overline{\mbox{span}}\bigg\{\mathcal{H}_0 \subseteq \underset{1 \leq i \ne j \leq n}{\bigcap} \ \mbox{Ker}(T_iT_j^*-T_jT_i^*):  P_{\mathcal{H}_0}T_i=T_iP_{\mathcal{H}_0}, 1 \leq i \leq n \bigg\}.
\]
Evidently, $\HS^{dc}$ is the largest joint reducing subspace for $T_1, \dotsc, T_n$ such that
$
(T_1|_{\HS^{dc}}, \dotsc, T_n|_{\HS^{dc}})
$
becomes a tuple of doubly commuting $\A_r$-contractions. It follows from Theorem \ref{thm602} that $\mathcal{H}^{dc}$ admits a unique decomposition into an orthogonal sum of $2^n$ joint reducing subspaces of $T_1, \dots , T_n$, say $\HS^{dc}_1, \dotsc, \HS^{dc}_{2^n}$ such that
\begin{enumerate}
\item $T_i|_{\HS^{dc}_j}$ is either an $\A_r$-unitary or a c.n.u. $\A_r$-contraction for $ 1 \leq i \leq n$ and $1 \leq j \leq 2^n$;
\item if $\Phi_n$ is the collection of all functions $\phi:\{1, \dotsc, n\} \to \{t_u, t_c\}$, then for every $\phi \in \Phi_n$, there is a unique maximal joint reducing subspace $\HS^{dc}_\ell (1 \leq \ell \leq 2^n)$ such that the $j$-th component of $(T_1|_{\HS^{dc}_{\ell}}, \dotsc, T_n|_{\HS^{dc}_{\ell}})$ is of the type $\phi(j)$ for $1 \leq j \leq n$;
\item $T_1|_{\HS^{dc}_1}, \dotsc, T_n|_{\HS^{dc}_1}$ are $\A_r$-unitaries and $T_1|_{\HS^{dc}_{2^n}}, \dotsc, T_n|_{\HS^{dc}_{2^n}}$ are c.n.u. $\A_r$-contractions.
\end{enumerate}
Let $\phi_1 \in \Phi_n$ be the constant map $\phi_1(j)=t_u$ for all $j$. We now consider the maps in $\Phi_n$ which take the value $t_c$ at only one point. Indeed, there are precisely $n$ such maps in $\Phi_n$. Let $\phi_2, \dotsc, \phi_{n+1}$ be those $n$ maps. Up to permutation of indices there is a unique maximal joint reducing subspace $\HS_{\ell}^{dc} \subset \HS^{dc}$ of $T_1, \dots , T_n$ for every $\phi_\ell$ for $1 \leq \ell \leq n+1)$ such that the $j$-th component of $(T_1|_{\HS^{dc}_{\ell}}, \dotsc, T_n|_{\HS^{dc}_{\ell}})$ is of the type $\phi_\ell(j)$ for $1 \leq j \leq n$. Let $\HS_\ell$ be the maximal joint reducing subspace for $T_1, \dotsc, T_n$ such that the $j$-th component of $(T_1|_{\HS_{\ell}}, \dotsc, T_n|_{\HS_{\ell}})$ is of the type $\phi_\ell(j)$ for $1 \leq j \leq n$ and $1 \leq \ell \leq n+1$. It follows from the definition of $\phi_\ell$ that at least $(n-1)$ among $T_1|_{\HS_{\ell}}, \dotsc, T_n|_{\HS_{\ell}}$ are $\A_r$-unitaries. Using Fuglede's Theorem \cite{Fuglede}, we have that $(T_1|_{\HS_{\ell}}, \dotsc, T_n|_{\HS_{\ell}})$ is a doubly commuting tuple and thus $\HS_\ell \subseteq \HS^{dc}$. Since $\HS_\ell^{dc} \subseteq \HS^{dc}$ is the largest joint reducing subspace of $T_1, \dots , T_n$ such that each $T_j|_{\HS^{dc}_{\ell}}$ is of the type $\phi(j)$, we have that $\HS_{\ell} \subseteq \HS_{\ell}^{dc}$. Thus, $\HS_{\ell}=\HS_{\ell}^{dc}$ for every $\ell = 1, \dots , n+1$. Consider the following closed joint reducing subspace for $T_1, \dotsc, T_n$:
$
\mathcal{H}(s)=\mathcal{H} \ominus (\HS_1 \oplus \dotsc \oplus \HS_{n+1}).
$
Let if possible there be a closed subspace $\mathcal{L} \subseteq \mathcal{H}(s)$ reducing each $T_j$ such that at least $(n-1)$ among the $\A_r$-contractions $T_1|_{\mathcal{L}}, \dotsc T_n|_{\mathcal{L}}$ are $\mathbb{A}_r$-unitaries. Without loss of generality, let $T_1|_{\mathcal{L}}, \dotsc ,T_{n-1}|_{\mathcal{L}}$ be those $\A_r$-unitaries. Again, by Fuglede's theorem \cite{Fuglede}, $(T_1|_\mathcal{L}, \dotsc T_n|_\mathcal{L})$ is a tuple of doubly commuting $\A_r$-contractions.  By the definition of $\mathcal{H}^{dc}$, we have $\mathcal{L} \subseteq \mathcal{H}^{dc}$. It follows from Theorem \ref{canonical decomposition} that there exists a unique decomposition of $\mathcal{L}$ into an orthogonal sum of two closed subspaces reducing $T_n|_\mathcal{L}$, say $\mathcal{L}= \mathcal{L}_{a} \oplus \mathcal{L}_{c},$ such that the $T_n|_{\mathcal{L}_{a}}$ is an $\mathbb{A}_r$-unitary and $T_n|_{\mathcal{L}_{c}}$ is a c.n.u. $\mathbb{A}_r$-contraction. By Lemma \ref{reducing}, $\mathcal{L}_a, \mathcal{L}_c$ are reducing subspaces for $T_1|_\mathcal{L}, \dotsc, T_{n-1}|_\mathcal{L}$ as well. Since $\mathcal{H}^{dc}_{1}$ is the largest closed subspace of $\mathcal{H}^{dc}$ reducing each $T_j$ to an $\A_r$-unitary, we have $\mathcal{L}_a \subseteq \mathcal{H}^{dc}_{1}$. For some $m$ with $ 2 \leq m \leq n+1$, there is the largest closed subspace $\mathcal{H}^{dc}_{m}$ of $\mathcal{H}^{dc}$ reducing each $T_1, \dotsc, T_{n-1}$ to an $\A_r$-unitary and $T_n$ to a c.n.u. $\A_r$-contraction. So, we have that $\mathcal{L}_c \subseteq \mathcal{H}^{dc}_{m}$. Consequently,
\[
\mathcal{L}=\mathcal{L}_a\oplus \mathcal{L}_c \subseteq \mathcal{H}^{dc}_{1} \oplus \mathcal{H}^{dc}_{m} = \mathcal{H}_{1} \oplus \mathcal{H}_{m} \subseteq \underset{1 \leq \ell \leq n+1}{\bigoplus}\mathcal{H}_{\ell} \,.
\]
Therefore, $\mathcal{L} \subseteq \mathcal{H}(s) \cap (\mathcal{H}\ominus \mathcal{H}(s))=\{0\}$ and so $(T_1|_{\mathcal{H}(s)}, \dotsc, T_n|_{\mathcal{H}(s)})$ is a strongly c.n.u. tuple of $\A_r$-contractions. The uniqueness part follows from the maximality of $\HS_{\ell}$ for $1 \leq \ell \leq n+1$. The proof is now complete. 
 
\end{proof}
 As a particular case of the above result, we obtain a Wold type decomposition for a tuple of commuting $\A_r$-isometries in the same spirit.
\begin{thm}
Let $(V_1, \dotsc, V_n)$ be a tuple of commuting $\A_r$-isometries acting on a Hilbert space $\mathcal{H}$. Then $\mathcal{H}$ admits a unique decomposition into an orthogonal sum of $(n+2)$ closed joint reducing subspaces $\HS_1, \dotsc, \HS_{n+1}, \HS(s)$ for $V_1, \dotsc, V_n$ such that
\begin{enumerate}
\item $V_i|_{\HS_j}$ is either an $\A_r$-unitary or a pure $\A_r$-isometry for $ 1 \leq i \leq n$ and $1 \leq j \leq n+1$;
\item If $\widetilde{\Phi_n}$ is the collection of all functions $\phi: \{1, \dotsc, n\} \to \{t_u, t_p\}$ such that there are at least $(n-1)$ values as $t_u$ among $\phi(1), \dotsc, \phi(n)$ , then corresponding to every $\phi \in \widetilde{\Phi_n}$, there is a unique maximal joint reducing subspace $\HS_\ell ( 1 \leq \ell \leq n+1)$ such that the $j$-th component of $(V_1|_{\HS_{\ell}}, \dotsc, V_n|_{\HS_{\ell}})$ is of the type $\phi(j)$ for $1 \leq j \leq n$;
\item $(V_1|_{\mathcal{H}(s)}, \dotsc, V_n|_{\mathcal{H}(s)})$ is a strongly pure $\A_r$-isometric tuple.
\end{enumerate}
One or more subspaces in the decomposition may coincide with the subspace $\{0\}$.

\end{thm}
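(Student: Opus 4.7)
The plan is to deduce this Wold-type decomposition for commuting $\A_r$-isometries directly from the canonical decomposition for commuting $\A_r$-contractions (Theorem \ref{thm702}), by recognizing that the class of $\A_r$-isometries is stable under restriction to closed joint reducing subspaces. Since every $\A_r$-isometry is an $\A_r$-contraction by Remark \ref{implications}, the commuting tuple $(V_1, \dotsc, V_n)$ qualifies for Theorem \ref{thm702}. I would apply that theorem to obtain a unique decomposition $\HS = \HS_1 \oplus \cdots \oplus \HS_{n+1} \oplus \HS(s)$ into closed joint reducing subspaces of $V_1, \dotsc, V_n$ carrying all the maximality and uniqueness properties listed there, where each $V_i|_{\HS_j}$ is either an $\A_r$-unitary or a c.n.u. $\A_r$-contraction and $(V_1|_{\HS(s)}, \dotsc, V_n|_{\HS(s)})$ is a strongly c.n.u. tuple of $\A_r$-contractions.

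The next step would be to upgrade the c.n.u. restrictions to pure $\A_r$-isometries. For this, I would first check that the restriction of an $\A_r$-isometry to a closed reducing subspace is again an $\A_r$-isometry. Each $V_i$ is an invertible operator, and for any reducing subspace $\mathcal{L}$, Lemma \ref{lem412} shows that $V_i|_{\mathcal{L}}$ remains invertible, while the operator identity $-V_i^{*2}V_i^{2}+(1+r^2)V_i^*V_i-r^2I=0$ guaranteed by Theorem \ref{thm407} passes to $\mathcal{L}$ at once. Hence by the same characterization, $V_i|_{\mathcal{L}}$ is an $\A_r$-isometry. Applied to $\mathcal{L}=\HS_j$, this means that whenever $V_i|_{\HS_j}$ is declared a c.n.u. $\A_r$-contraction by Theorem \ref{thm702}, it is in fact an $\A_r$-isometry that is c.n.u. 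Theorem \ref{Wold decomposition} then forces its $\A_r$-unitary part to be trivial, so $V_i|_{\HS_j}$ is a pure $\A_r$-isometry. This establishes item (1), and items (2) and the maximality assertions are inherited verbatim from Theorem \ref{thm702} once the labels $t_c$ are replaced by $t_p$.

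For item (3), I would observe that the definitions of a strongly c.n.u. tuple of $\A_r$-contractions and a strongly pure $\A_r$-isometric tuple impose the same condition on the absence of non-zero joint reducing subspaces with at least $(n-1)$ components being $\A_r$-unitaries, the only extra requirement in the latter being that each component is an $\A_r$-isometry. Since every $V_i|_{\HS(s)}$ is an $\A_r$-isometry by the same restriction argument, the tuple $(V_1|_{\HS(s)}, \dotsc, V_n|_{\HS(s)})$ is a strongly pure $\A_r$-isometric tuple. Uniqueness of the whole decomposition is inherited from the corresponding uniqueness in Theorem \ref{thm702}. There is no real obstacle in this approach; the only technical checkpoint, namely the preservation of the $\A_r$-isometry property under restriction to reducing subspaces, is handled cleanly by the algebraic characterization in Theorem \ref{thm407}.
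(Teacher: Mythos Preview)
Your proposal is correct and follows essentially the same approach as the paper, which simply presents this theorem as a particular case of Theorem \ref{thm702} without giving a separate proof. You have spelled out precisely the details the paper leaves implicit: that $\A_r$-isometries restrict to $\A_r$-isometries on reducing subspaces via the algebraic characterization in Theorem \ref{thm407}, so the c.n.u.\ parts become pure $\A_r$-isometries and the strongly c.n.u.\ tuple becomes a strongly pure $\A_r$-isometric tuple.
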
	
	
Since all the way we have discussed about commuting and doubly commuting $\A_r$-contractions in this and the previous Sections, we want to warrant the existence of pair of commuting $\A_r$-contractions that are not doubly commuting. More precisely, we shall construct a pair of c.n.u. $\A_r$-contractions for this.
	
		\begin{eg}
		Consider the space
					$L^2(\partial\mathbb{A}_r)=L^2(\mathbb{T})\oplus L^2(r\mathbb{T})$,
				where $L^2(\mathbb{T})$ and $L^2(r\mathbb{T})$ are endowed with normalized Lebesgue measure and the norm is given by
		\begin{equation*}
			\|f\|^2=\frac{1}{2\pi}\overset{2\pi}{\underset{0}{\int}}|f(e^{it})|^2dt+\frac{1}{2\pi}\overset{2\pi}{\underset{0}{\int}}|f(re^{it})|^2dt  .  
		\end{equation*}
		Define the operator $S$ on $L^2(\partial\mathbb{A}_r)$ by
					$S(f)(z)=zf(z)$.
		 We recall from Example \ref{eg2} that the set $\{w_{\alpha+n}: n \in \mathbb{Z}\}$ is an orthogonal set in $L^2(\partial \mathbb{A}_r)$. The space $H_{\alpha}^2(\partial \mathbb{A}_r)$ is the closure of the span in $L^2(\partial \mathbb{A}_r)$ of the functions $w_{\alpha +n}$. It follows from Example \ref{eg2} that $S_\alpha=S|_{H_{\alpha}^2(\partial \mathbb{A}_r)}$ is a pure $\mathbb{A}_r$-isometry and that $\sigma(S_\alpha)=\overline{\mathbb{A}}_r$. Since $S_\alpha$ has $\overline{\mathbb{A}}_r$ as a spectral set and any compact superset containing a spectral set is again a spectral set, it follows that $\CA_{r^2}$ is a spectral set for $S_\alpha$. Hence, $S_\alpha$ is an $\A_{r^2}$-contraction. We now show that $S_\alpha$ is actually a c.n.u. $\mathbb{A}_{r^2}$-contraction on $H_{\alpha}^2(\partial \mathbb{A}_r)$.		
		
		\smallskip
		
		 Assume that $\mathcal{H} \subseteq H_{\alpha}^2(\partial \mathbb{A}_r)$ is a closed subspace reducing $S_\alpha$ to an $\mathbb{A}_{r^2}$-unitary. By virtue of Theorem \ref{thm302}, we may decompose $\mathcal{H}$ into closed reducing subspaces $\HS_u, \HS_{r^2}$ of $S_\alpha$ as
			$
				\mathcal{H}=\mathcal{H}_u \oplus \mathcal{H}_{r^2}
			$ 
			such that $S_\alpha|_{\mathcal{H}_u}$ and  $r^2S^{-1}_\alpha|_{\mathcal{H}_{r^2}}$ are unitaries. Since $S_\alpha$ is a pure $\mathbb{A}_r$-isometry, it has no $\A_r$-unitary part and so, $\mathcal{H}_u=\{0\}$. Note that $S_\alpha|_{\mathcal{H}_{r^2}}$ is $\mathbb{A}_r$-isometry because $\mathcal{H}_{r^2}$ is a closed subspace of $H_{\alpha}^2(\partial \mathbb{A}_r)$ that reduces $S_\alpha$. It follows from Theorem \ref{thm404} that either $\sigma(S_\alpha|_{\mathcal{H}_{r^2}})=\overline{\mathbb{A}}_r $ or $\sigma(S_\alpha|_{\mathcal{H}_{r^2}})\subseteq \partial\mathbb{A}_r$.
Since $r^2S^{-1}_\alpha|_{\mathcal{H}_{r^2}}$ is a unitary, $ \sigma(S_\alpha|_{\mathcal{H}_{r^2}}) \subseteq r^2\mathbb{T}$ which is a contradiction. Thus, $\mathcal{H}_{r^2}=\{0\}$ and so, $\HS=\{0\}$. Hence, $S_\alpha$ is a c.n.u. $\mathbb{A}_{r^2}$-contraction.
		By spectral mapping theorem, we have  
		\[
		\sigma(S_\alpha^2)=\{\lambda^2: \lambda \in \sigma(S_\alpha) \}=\CA_{r^2}.
		\]
	 Let $f$ be a rational function with poles off $\CA_{r^2}$ and let $g(z)=f(z^2)$. Then $g$ is a rational function with poles off $\CA_r$. Using the fact that $S_\alpha$ is an $\A_r$-contraction, we have
		\begin{equation*}
			\begin{split}
				\|f(S_\alpha^2)\|  = \|g(S_\alpha)\| 
				 \leq \max \{|g(z)|: z \in  \CA_r\}  \leq \max \{|f(z^2)| : z \in \CA_r \} \leq \|f\|_{\infty, \CA_{r^2}}. \\
			\end{split}
		\end{equation*}
		Hence, $S_\alpha^2$ is an $\mathbb{A}_{r^2}$-contraction. Therefore, $(S_\alpha, S_\alpha^2)$ is a commuting pair of $\A_{r^2}$-contractions on $H_{\alpha}^2(\partial \mathbb{A}_r)$. Now we show that $S_\alpha^2$ is a c.n.u. $\mathbb{A}_{r^2}$-contraction.
		
		\smallskip
	
We use the similar computations as in Example \ref{eg2}. Take any $f$ in $H_{\alpha}^2(\partial \mathbb{A}_r)$. Then there is some $\{f_n\}_{n \in \Z} \subset \C$ such that $	f=\underset{n \in \Z}{\sum}f_nw_{\alpha+n}$. Consequently,
\[
\|f\|^2-\|S_\alpha^2f\|^2=\underset{n \in \Z}{\sum}(\|f_nw_{\alpha+n}\|^2-\|f_nw_{\alpha+n+2}\|^2)
=\underset{n \in \Z}{\sum}|f_n|^2(\|w_{\alpha+n}\|^2-\|w_{\alpha+n+2}\|^2)
\]
and 
\[
\|f\|^2-\|r^2(S_\alpha^2)^{-1}f\|^2=\underset{n \in \Z}{\sum}(\|f_nw_{\alpha+n}\|^2-\|r^2f_nw_{\alpha+n-2}\|^2)
=\underset{n \in \Z}{\sum}|f_n|^2(\|w_{\alpha+n}\|^2-r^4\|w_{\alpha+n-2}\|^2).
\]
Since 
\[
\|w_{\alpha+n}\|^2-\|w_{\alpha+n+2}\|^2=r^{2(\alpha+n)}(1-r^4) \quad \text{and} \quad \|w_{\alpha+n}\|^2-r^4\|w_{\alpha+n-2}\|^2=(1-r^4),
\]
we must have $\|S_\alpha^2f\|=\|f\|$ or $\|r^2S_\alpha^{-2}f\|=\|f\|$ if and only if $f=0$.	Thus, neither $S_\alpha^2$ contains a unitary part nor it contains a $r^2$-times unitary part. Therefore, $S_\alpha^2$ does not contain any $\A_r$-unitary part and thus is a c.n.u. $\A_{r^2}$-contraction.

\smallskip

It remains to show that $S_\alpha$ and $S_\alpha^2$ do not doubly commute. For the ease of computations, let $(V_1, V_2)=(S_\alpha, S_\alpha^2)$. Note that
		 $\langle V_1^*(w_{\alpha+n}), w_{\alpha+m}\rangle =\langle w_{\alpha+n}, V_1w_{\alpha+m}\rangle
			=\langle w_{\alpha+n}, w_{\alpha+m+1}\rangle$. Hence,
			\[ 
			\langle V_1^*(w_{\alpha+n}), w_{\alpha+m}\rangle= \left\{
			\begin{array}{ll}
				0 & n \ne m+1 \\
				1+r^{2(\alpha+n)} & n=m+1 \; ,\\
			\end{array} 
			\right. 
			\]    
			which implies that $V_1^*(w_{\alpha+n})$ is orthogonal to $\overline{\text{span}}\{w_{\alpha+m} : m\ne n+1 \in \mathbb{Z}\}$. Therefore, $V_1^*(w_{\alpha+n})$ is in $\mbox{span}\{w_{\alpha+n-1}\}$ and so,   $V_1^*(w_{\alpha+n})=r(\alpha;n)w_{\alpha+n-1}$ for some $r(\alpha;n) \in \C$. It follows from Example \ref{eg2} that
		\[
					1+r^{2(\alpha+n)}= \langle V_1^*(w_{\alpha+n}), w_{\alpha+n-1}\rangle =\langle r(\alpha;n)w_{\alpha+n-1}, w_{\alpha+n-1}\rangle
					=r(\alpha;n)(1+r^{2(\alpha+n-1)}).
		\]	
Therefore, $r(\alpha;n)=(1+r^{2(\alpha+n)})(1+r^{2(\alpha+n-1)})^{-1}$. Since $V_2=V_1^2$, we have that
			\[
					V_2^*(w_{\alpha+n})
					=V_1^*V_1^*(w_{\alpha+n})
					=V_1^*(r(\alpha;n) w_{\alpha+n-1})
				=r(\alpha;n)r(\alpha;n-1)w_{\alpha+n-2}.
					\]
Putting everything together, we have
			\[
		V_1^*(w_{\alpha+n})=r(\alpha;n)w_{\alpha+n-1}, \quad  \text{and} \quad 	V_2^*(w_{\alpha+n})=r(\alpha;n)r(\alpha;n-1)w_{\alpha+n-2} \; ,
		\]
where
\[ 
r(\alpha;n)=\frac{1+r^{2(\alpha+n)}}{1+r^{2(\alpha+n-1)}} \qquad (n \in \Z).
			\]
A routine calculation yields 
\[
 V_1V_2^*(w_\alpha)=r(\alpha;0)r(\alpha;-1)w_{\alpha-1} \quad \text{and} \quad V_2^*V_1(w_\alpha)=r(\alpha;1)r(\alpha;0)w_{\alpha-1}.
 \] 
So, we have $V_1V_2^*(w_\alpha) \ne V_2^*V_1(w_\alpha)$ as $r(\alpha;-1) \ne r(\alpha;1)$. Hence, $V_1, V_2$ are commuting c.n.u. $\mathbb{A}_{r^2}$- contractions which do not doubly commute.
	\end{eg}
	
	\vspace{0.2cm}
	\section{Generalization}\label{conclusion}
\vspace{0.2cm}	
	
\noindent In this Section, we generalize the results of Section \ref{doubly commuting} to any infinite family of doubly commuting $\A_r$-contractions. To do this, we shall follow the combinatorial techniques and algorithm developed in Section 6 of \cite{Pal}. Recall from Definition \ref{defn:061} that an `atom' is either an $\A_r$-unitary or a c.n.u. $\A_r$-contraction. 
 
\subsection{Decomposition of an infinite family of doubly commuting $\A_r$-contractions} \label{Subsec:final-01}

Let us consider a set of doubly commuting $\A_r$-contractions $\mathcal{T}=\{T_\gamma \ : \ \gamma \in \Omega\}$, where $\Omega$ is an infinite set and each $T_\gamma$ acts on a Hilbert space $\HS$. We assume that there are infinitely many non-atoms in $\mathcal{T}$ to avoid the triviality. We choose an arbitrary $\gamma \in \Omega$. If $T_{\gamma a} \oplus T_{\gamma c}$ is the canonical decomposition of $T_\gamma$ with respect to $\mathcal{H}=\HS_a \oplus \HS_c$ as in Theorem \ref{canonical decomposition}, then $\mathcal{H}_a, \mathcal{H}_c$ are joint reducing subspaces for $\mathcal{T}$, and thus $\mathcal{T}$ is orthogonally decomposed into two tuples, say 
\[
\mathcal{T}_a=\{T_\mu|_{\mathcal{H}_a}  \ : \mu \in \Omega \} \quad \text{and} \quad \mathcal{T}_c=\{T_\mu|_{\mathcal{H}_c}  \ : \mu \in \Omega \}
\]
and clearly the $\gamma$-th entities ($T_{\gamma a}$ and $T_{\gamma c}$, respectively) are atoms. Let 
\[
\mathbb{F}_0=\{(\mathcal{T}, \HS)\} \quad \text{and} \quad \mathbb{F}_\gamma=\{(\mathcal{T}_a, \HS_a), (\mathcal{T}_c, \HS_c)\}.
\]
Next, we apply the axiom of choice and choose $\beta \in \Omega \setminus \{\gamma\}$. Consider the $\beta$-th components of $\mathcal{T}_a$ and $\mathcal{T}_c$, i.e. $T_\beta|_{\mathcal{H}_a}$ and $T_\beta|_{\mathcal{H}_c}$ to perform the canonical decomposition of $\mathcal{T}_a$ and $\mathcal{T}_c$ respectively. We obtain $2^2=4$ new tuples acting on four orthogonal parts of $\HS$, say, $\HS_{aa}, \HS_{ac}, \HS_{ca}, \HS_{cc}$, respectively, where, $\HS_a=\HS_{aa} \oplus \HS_{ac}$ and $\HS_c=\HS_{ca} \oplus \HS_{cc}$. Let us denote by 
\[
\mathcal{T}_{aa}=\{T_\mu|_{\mathcal{H}_{aa}}  \ : \mu \in \Omega \} \quad \text{and} \quad \mathcal{T}_{ac}=\{T_\mu|_{\mathcal{H}_{ac}}  \ : \mu \in \Omega \}
\]
and 
\[
\mathcal{T}_{ca}=\{T_\mu|_{\mathcal{H}_{ca}}  \ : \mu \in \Omega \} \quad \text{and} \quad \mathcal{T}_{cc}=\{T_\mu|_{\mathcal{H}_{cc}}  \ : \mu \in \Omega \}
\]
respectively. Let
\[
\mathbb{F}_\beta=\left\{(\mathcal{T}_{aa}, \HS_{aa}), \ (\mathcal{T}_{ac}, \HS_{ac}), \ (\mathcal{T}_{ca}, \HS_{ca}), \ (\mathcal{T}_{cc}, \HS_{cc})\right\}.
\]
Continuing this process, we see that after applying the axiom of choice $n$-times, we obtain $2^n$ operator tuples (corresponding to the $2^n$ orthognal splits of $\HS$). Let 
\[
\mathbb{F}_\Omega=\underset{\gamma \in \Omega \cup \{0\}}{\bigcup}\mathbb{F}_\gamma.
\]
One can define a relation $"\leq"$ on $\mathbb{F}_\Omega$ in the following way: for any two members two members $\left(A, \mathcal{H}_{A}\right),\left(B, \mathcal{H}_{B}\right)$ in $\mathbb{F}_\Omega$, with $\left(A, \mathcal{H}_A\right) \leq \left(B, \mathcal{H}_B\right)$ if and only if $\mathcal{H}_{B} \subseteq \mathcal{H}_{A}, \mathcal{H}_{B}$ is a joint reducing subspace for $A$ and $B=A|_{{\mathcal{H}_{B}}}$. It turns out that $(\mathbb{F}_\Omega, \leq)$ is a partially ordered set and $(\mathcal{T}, \HS) \leq (A, \HS_A)$ for any $(A, \HS_A) \in \mathbb{F}_\Omega$.
 We denote by $\mathbb{F}_\infty$ the set of all maximal elements for the \textit{maximal totally ordered sets} (see Definition 6.1 in \cite{Pal}) in $\mathbb{F}_\Omega$. The limiting set $\mathbb{F}_\infty$ must have cardinality $2^{|\Omega|}$, where $|\Omega|$ is the cardinality of the set $\Omega$. Intuitively, this holds because here we consider the collection of all functions $f: \Omega \to \{t_u, t_c\}$ and its cardinality is $2^{|\Omega|}$. Moreover, the set $\mathbb{F}_\infty$ consists of atoms only. An interested reader may refer Section 6 in \cite{Pal} for further details and prove the analogues of the statements there in the setting of $\A_r$-contractions without any difficulties. Using the combinatorial arguments provided in Section 6.2 of the first named author's work \cite{Pal}, we can obtain the desired decomposition for any infinite family of doubly commuting $\A_r$-contractions. This is one of the main results of this article.
\begin{thm}\label{general case}
	Let $\mathcal{T}=\{T_\gamma: \gamma \in \Omega \}$ be an infinite family of doubly commuting $\A_r$-contractions acting on a Hilbert space $\mathcal{H}$ which consists of infinitely many non-atoms. If $S_{\Omega}$ is the set of all functions $f: \Omega \rightarrow\{t_u, t_c\}$ and 
	$
	\mathbb{F}_\infty=\{(A_\alpha, \mathcal{H}_\alpha): \alpha\in S_\Omega\},
	$
	where $A_\alpha=\{T_\gamma|_{\mathcal{H}_\alpha}: \gamma \in \Omega\},$ then we have the following.
	\begin{enumerate}
		\item  $\mathcal{H}$ admits an orthogonal decomposition $\mathcal{H}=\bigoplus_{\alpha \in S_{\Omega}} \mathcal{H}_\alpha$ such that each $\mathcal{H}_\alpha$ is a joint reducing subspace for $\mathcal{T}$.
		\item  For each $\alpha \in S_{\Omega}$, the members of the family $\{T_\gamma|_{\mathcal{H}_\alpha}: \gamma \in \Omega \}$ are all atoms.
		\item  There is exactly one element, say $\left(A_1, \mathcal{H}_1\right)$ in $\mathbb{F}_{\infty}$ such that the members of $A_1$ are all atoms of type $t_u$ and this is determined by the constant function $f_1: \Omega \rightarrow\{t_u, t_c\}$ defined by $f_1(\gamma)=t_u$ for all $\gamma\in \Omega$.
		\item  There is exactly one element, say $\left(A_{2^{|\Omega|}}, \mathcal{H}_{2^{|\Omega|}}\right)$ in $\mathbb{F}_{\infty}$ such that the members of $A_{2^{|\Omega|}}$ are all atoms of type $t_c$ that is  determined by the constant function $f_{2^{|\Omega|}}: \Omega \rightarrow\{t_u, t_c\}$ defined by $f_{2^{|\Omega|}}(\gamma)=t_c$ for all $\gamma \in \Omega$.
		\item  The cardinality of the set $\mathbb{F}_{\infty}$ is $2^{|\Omega|}$, where $|\Omega|$ is the cardinality of the set $\Omega$.
		
	\end{enumerate}
	One or more members of $\left\{\mathcal{H}_\alpha: \alpha \in S_{\Omega}\right\}$ may coincide with the trivial subspace $\{0\}$.
\end{thm}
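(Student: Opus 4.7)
The plan is to execute the transfinite algorithm outlined in Subsection \ref{Subsec:final-01}, relying on Theorem \ref{canonical decomposition} as the atomic decomposition step and on Lemma \ref{reducing} to ensure that the subspaces produced at each stage are joint reducing subspaces for the entire family $\mathcal{T}$. First, I would formalise the construction of $\mathbb{F}_\Omega$: starting from $(\mathcal{T},\mathcal{H})$, whenever some member $T_\gamma|_{\mathcal{H}_B}$ of a tuple $(B,\mathcal{H}_B)$ is a non-atom, Theorem \ref{canonical decomposition} splits $\mathcal{H}_B$ into two proper subspaces reducing $T_\gamma|_{\mathcal{H}_B}$, and Lemma \ref{reducing} (applied iteratively to every other member, which doubly commutes with $T_\gamma$) shows both subspaces reduce every $T_\mu|_{\mathcal{H}_B}$. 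This furnishes a strictly larger element of $(\mathbb{F}_\Omega,\leq)$, and thus double commutativity is precisely what makes the refinement compatible with the whole family.

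Next, I would invoke Zorn's lemma on $(\mathbb{F}_\Omega, \leq)$ to produce maximal totally ordered chains. To each maximal chain $\mathcal{C}$ associate the closed subspace $\mathcal{H}_{\mathcal{C}} = \bigcap_{(B,\mathcal{H}_B)\in \mathcal{C}} \mathcal{H}_B$; since an intersection of joint reducing subspaces is again a joint reducing subspace, $\mathcal{H}_{\mathcal{C}}$ reduces every $T_\gamma$. The maximality of $\mathcal{C}$ forces every $T_\gamma|_{\mathcal{H}_{\mathcal{C}}}$ to be an atom, for otherwise Theorem \ref{canonical decomposition} would allow a proper extension of $\mathcal{C}$. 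This gives (2). The labelling map sending a chain $\mathcal{C}$ to the function $f_{\mathcal{C}}:\Omega\to\{t_u,t_c\}$ defined by $f_{\mathcal{C}}(\gamma)=t_u$ or $t_c$ according to whether $T_\gamma|_{\mathcal{H}_{\mathcal{C}}}$ is an $\A_r$-unitary or a c.n.u.\ $\A_r$-contraction identifies $\mathbb{F}_\infty$ with $S_\Omega$ (up to possibly trivial entries), yielding the cardinality claim (5). Statements (3) and (4) follow by noting that the constant functions $f_1\equiv t_u$ and $f_{2^{|\Omega|}}\equiv t_c$ correspond, respectively, to the largest joint reducing subspace on which every $T_\gamma$ acts as an $\A_r$-unitary and the one on which every $T_\gamma$ is a c.n.u.\ $\A_r$-contraction; the maximality is inherited from the canonical decomposition at each stage.

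For the orthogonal direct sum claim (1), I would argue that two distinct chains $\mathcal{C}\neq \mathcal{C}'$ must differ at some first splitting stage, where the two chains choose opposite components of a canonical decomposition $\mathcal{H}_B=\mathcal{H}_{Ba}\oplus \mathcal{H}_{Bc}$; consequently $\mathcal{H}_{\mathcal{C}}\perp \mathcal{H}_{\mathcal{C}'}$. To show the subspaces exhaust $\mathcal{H}$, fix $h\in \mathcal{H}$ with $h\perp \bigoplus_{\alpha} \mathcal{H}_\alpha$ and argue by contradiction: the closed joint reducing subspace generated by $h$ would have to be refinable indefinitely, which upon transfinite recursion yields an element of $\mathbb{F}_\infty$ missed by the enumeration, contradicting its construction. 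In the countable case this is a direct induction, while in the uncountable case it uses the combinatorial bookkeeping of Section 6.2 of \cite{Pal}.

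The main obstacle will be the transfinite bookkeeping: one must verify that at limit ordinals the intersection $\bigcap \mathcal{H}_B$ is nontrivial (or tracked correctly if trivial), that the partial order is well-founded enough for Zorn's lemma to give chains of length $|\Omega|$, and that the bijection $\mathbb{F}_\infty\leftrightarrow S_\Omega$ is well defined independently of the order in which $\Omega$ is exhausted. The arguments developed in Section 6 of \cite{Pal} for doubly commuting contractions carry over verbatim once Theorem \ref{thm104} is replaced by Theorem \ref{canonical decomposition} and Theorem \ref{dc} is replaced by Lemma \ref{reducing}, so the remaining work is a careful translation rather than a new combinatorial argument.
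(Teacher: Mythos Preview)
Your proposal is correct and follows essentially the same approach as the paper: the paper's argument is precisely the transfinite refinement via Theorem \ref{canonical decomposition} with Lemma \ref{reducing} guaranteeing joint reducibility at each stage, assembled into the poset $(\mathbb{F}_\Omega,\leq)$ and then deferring the combinatorial bookkeeping (maximal chains, labelling by $S_\Omega$, orthogonality and exhaustion) to Section~6 of \cite{Pal}. Your sketch is in fact more explicit than the paper's own treatment, which states the construction and then refers the reader to \cite{Pal} for the details.
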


As a natural consequence of this canonical decomposition, we obtain as a special case the following Wold type decomposition for any infinite family of doubly commuting $\A_r$-isometries.
\begin{thm}\label{Wold_infinite}
	Let $\mathcal{V}=\{V_\gamma: \gamma \in \Omega \}$ be an infinite family of doubly commuting $\A_r$-isometries acting on a Hilbert space $\mathcal{H}$ which consists of infinitely many $\A_r$-isometries that are neither $\A_r$-unitaries nor pure $\A_r$-isometries. If $S_{\Omega}$ is the set of all functions $f: \Omega \rightarrow\{t_u, t_c\}$ and 
$
	\mathbb{F}_\infty=\{(A_\alpha, \mathcal{H}_\alpha): \alpha\in S_\Omega\},
	$
	where $A_\alpha=\{V_\gamma|_{\mathcal{H}_\alpha}: \gamma \in \Omega\},$ then we have the following.
	\begin{enumerate}
		\item  $\mathcal{H}$ admits an orthogonal decomposition $\mathcal{H}=\bigoplus_{\alpha \in S_{\Omega}} \mathcal{H}_\alpha$ such that each $\mathcal{H}_\alpha$ is a joint reducing subspace for $\mathcal{T}$.
		\item  For each $\alpha \in S_{\Omega}$, the family $\{V_\gamma|_{\mathcal{H}_\alpha}: \gamma \in \Omega \}$ consists of $\A_r$-unitaries and $\A_r$-isometries.
		\item  There is exactly one element, say $\left(A_1, \mathcal{H}_1\right)$ in $\mathbb{F}_{\infty}$ such that the members of $A_1$ are all atoms of type $t_u$ which is determined by the constant function $f_1: \Omega \rightarrow\{t_u, t_c\}$ defined by $f_1(\gamma)=t_u$ for all $\gamma \in \Omega$.
		\item  There is exactly one element, say $\left(A_{2^{|\Omega|}}, \mathcal{H}_{2^{|\Omega|}}\right)$ in $\mathbb{F}_{\infty}$ such that the members of $A_{2^{|\Omega|}}$ are all atoms of type $t_c$ and this is determined by the constant function $f_{2^{|\Omega|}}: \Omega \rightarrow\{t_u, t_c\}$ defined by $f_{2^{|\Omega|}}(\gamma)=t_c$ for all $\gamma \in \Omega$.
		\item  The cardinality of the set $\mathbb{F}_{\infty}$ is $2^{|\Omega|}$, where $|\Omega|$ is the cardinality of the set $\Omega$.
		
	\end{enumerate}
	One or more members of $\left\{\mathcal{H}_\alpha: \alpha \in S_{\Omega}\right\}$ may coincide with the trivial subspace $\{0\}$.
\end{thm}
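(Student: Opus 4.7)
\medskip

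\noindent\textbf{Proof proposal.} My plan is to deduce Theorem \ref{Wold_infinite} as a direct corollary of Theorem \ref{general case}, using the fact that for an $\A_r$-isometry the canonical decomposition of Theorem \ref{canonical decomposition} reduces to the Wold decomposition of Theorem \ref{Wold decomposition}. First, since every $\A_r$-isometry is in particular an $\A_r$-contraction (see Remark \ref{implications}), the family $\mathcal{V}=\{V_\gamma:\gamma\in\Omega\}$ is an infinite family of doubly commuting $\A_r$-contractions. The hypothesis that infinitely many members of $\mathcal{V}$ are neither $\A_r$-unitaries nor pure $\A_r$-isometries translates, via the forthcoming identification of c.n.u.\ $\A_r$-contraction parts with pure $\A_r$-isometry parts, into the non-atom hypothesis of Theorem \ref{general case}. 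Thus Theorem \ref{general case} applies and yields the orthogonal decomposition $\mathcal{H}=\bigoplus_{\alpha\in S_\Omega}\mathcal{H}_\alpha$ into joint reducing subspaces for $\mathcal{V}$, indexed by $S_\Omega=\{f:\Omega\to\{t_u,t_c\}\}$, together with conclusions (3), (4) and (5), which carry over verbatim.

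\smallskip

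\noindent The only non-trivial task is to upgrade conclusion (2) of Theorem \ref{general case} to the isometric setting, namely to check that each restriction $V_\gamma|_{\mathcal{H}_\alpha}$ is either an $\A_r$-unitary or a pure $\A_r$-isometry. The plan here has two steps. First, I would verify that $V_\gamma|_{\mathcal{H}_\alpha}$ is always an $\A_r$-isometry: indeed, $V_\gamma$ is invertible (being an $\A_r$-isometry) and satisfies the operator identity $V_\gamma^{*2}V_\gamma^2=(1+r^2)V_\gamma^*V_\gamma-r^2 I$ by Theorem \ref{thm407}; since $\mathcal{H}_\alpha$ reduces $V_\gamma$, the restriction $V_\gamma|_{\mathcal{H}_\alpha}$ is again invertible by Lemma \ref{lem412} and inherits the same operator identity, so Theorem \ref{thm407} forces it to be an $\A_r$-isometry. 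Second, for those $\gamma,\alpha$ such that $V_\gamma|_{\mathcal{H}_\alpha}$ is an atom of type $t_c$ (i.e.\ a c.n.u.\ $\A_r$-contraction), I apply the Wold decomposition of Theorem \ref{Wold decomposition} to the $\A_r$-isometry $V_\gamma|_{\mathcal{H}_\alpha}$: this splits $\mathcal{H}_\alpha$ into an $\A_r$-unitary part and a pure $\A_r$-isometry part, and the c.n.u.\ hypothesis forces the unitary summand to be $\{0\}$, so the restriction is a pure $\A_r$-isometry. Symmetrically, atoms of type $t_u$ among $\A_r$-isometries are exactly $\A_r$-unitaries by definition.

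\smallskip

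\noindent With these two observations in hand, conclusions (1)--(5) of Theorem \ref{Wold_infinite} follow immediately from the corresponding items in Theorem \ref{general case}: items (1), (3), (4) and (5) are literally the same assertions, while item (2) is the conjunction of the above type identification applied to every pair $(\gamma,\alpha)$. I would close by noting that the extremal reducing subspace $\mathcal{H}_1$ carries simultaneous $\A_r$-unitaries (the constant function $f_1\equiv t_u$), while $\mathcal{H}_{2^{|\Omega|}}$ carries simultaneous pure $\A_r$-isometries (the constant function $f_{2^{|\Omega|}}\equiv t_c$), and that some of the $\mathcal{H}_\alpha$ may be trivial, inherited directly from the analogous remark in Theorem \ref{general case}. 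The main (and essentially only) conceptual point is the interplay between the two senses of ``c.n.u.'' versus ``pure isometric'' being reconciled by Theorem \ref{Wold decomposition}; the rest is bookkeeping.
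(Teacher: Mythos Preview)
Your proposal is correct and follows essentially the same approach as the paper, which derives Theorem \ref{Wold_infinite} directly as a special case of Theorem \ref{general case} together with the remark that an $\A_r$-isometry which is an atom of type $t_c$ (i.e.\ a c.n.u.\ $\A_r$-contraction) must be a pure $\A_r$-isometry. In fact you supply more detail than the paper does, explicitly verifying via Theorem \ref{thm407} and Lemma \ref{lem412} that restrictions to the reducing subspaces $\mathcal{H}_\alpha$ remain $\A_r$-isometries and that the non-atom hypothesis matches up.
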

 Needless to mention here, in the statement of the above Theorem, any $\A_r$-isometry which is an atom of type $t_c$ (see Definition \ref{defn:061}) must be a pure $\A_r$-isometry. Thus, Theorem \ref{Wold_infinite} is indeed an extension of Theorem \ref{Wold decomposition} to an infinite family of doubly commuting $\A_r$-isometries. Also, it follows from Theorem \ref{general case} that for a doubly commuting family of $\A_r$-contractions $\{T_\gamma\}_{\gamma \in \Omega}$ acting on a Hilbert space $\HS$, there is a joint reducing closed subspace, say $\HS_1$, such that $\{T_\gamma|_{\HS_1}\}_{\gamma \in \Omega}$ is a commuting family of $\A_r$-unitaries. Thus, it is possible to have an extension of Theorem \ref{thm305} to any infinite family of $\A_r$-unitaries and we present this below. Recall that an $\A_r$-unitary $U$ is a \textit{type u} if $U$ is a unitary and a \textit{type r} if $U$ is an $r$-times unitary.
\begin{thm}
	Let $\mathcal{U}=\{U_\rho : \rho \in \mathcal{P}  \}$ be an infinite family of commuting $\A_r$-unitaries acting on a Hilbert space $\mathcal{H}$. If $S_{\mathcal{P}}$ is the set of all functions $\mu: \mathcal{P}  \rightarrow\{u, r\}$ and 
	$
	\mathbb{F}_\infty=\{(A_\mu, \mathcal{H}_\mu): \mu \in S_\mathcal{P}\},
	$
	where $A_\mu=\{U_\rho|_{\mathcal{H}_\mu}: \rho \in \mathcal{P}\},$ then we have the following.
	\begin{enumerate}
		\item  $\mathcal{H}$ admits an orthogonal decomposition $\mathcal{H}=\bigoplus_{\mu \in S_{\mathcal{P}}} \mathcal{H}_\mu$ such that each $\mathcal{H}_\mu$ is a joint reducing subspace for $\mathcal{T}$.
		\item  $A_\mu$ consists of unitaries and $r$-times unitaries for every $\mu \in S_{\mathcal{P}}$.
		\item  There is exactly one element, say $\left(A_1, \mathcal{H}_1\right)$ in $\mathbb{F}_{\infty}$ such that $A_1$ consists of only unitaries which is determined by the constant function $\mu_1: \mathcal{P} \rightarrow\{u, r\}$ defined by $\mu_1(\rho)=u$ for all $\rho \in \mathcal{P}$.
		\item  There is exactly one element, say $\left(A_{2^{|\mathcal{P}|}}, \mathcal{H}_{2^{|\mathcal{P}|}}\right)$ in $\mathbb{F}_{\infty}$ such that $A_{2^{|\mathcal{P}|}}$ consists of only $r$-times unitaries and this is determined by the constant function $\mu_{2^{|\mathcal{P}|}}: \mathcal{P} \rightarrow\{u, r\}$ defined by $\mu_{2^{|\mathcal{P}|}}(\rho)=r$ for all $\rho \in \mathcal{P}$.
		\item  The cardinality of the set $\mathbb{F}_{\infty}$ is $2^{|\mathcal{P}|}$, where $|\mathcal{P}|$ is the cardinality of the set $\mathcal{P}$.
	\end{enumerate}
	One or more members of $\left\{\mathcal{H}_\mu: \mu \in S_{\mathcal{P}}\right\}$ may coincide with the trivial subspace $\{0\}$.
\end{thm}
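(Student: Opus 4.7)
The plan is to reduce the problem to the doubly commuting setting already handled in Theorem \ref{general case} and then transfer its conclusions. The key observation is that $\mathcal{U}=\{U_\rho : \rho \in \mathcal{P}\}$ consists of commuting normal operators (since each $\A_r$-unitary is normal by definition). Fuglede's theorem then guarantees that $U_\rho U_\sigma^* = U_\sigma^* U_\rho$ for all $\rho \ne \sigma$ in $\mathcal{P}$, so $\mathcal{U}$ is in fact a doubly commuting family. Moreover, each $U_\rho$ is an $\A_r$-contraction, so the machinery of Section~\ref{conclusion} applies.

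First, I would repeat the inductive/transfinite construction of $\mathbb{F}_\Omega$ verbatim but using Theorem \ref{thm302} in place of Theorem \ref{canonical decomposition}. That is, at each stage choose some index $\rho \in \mathcal{P}$ and apply Theorem \ref{thm302} to split the current summand as $\HS = \HS_u \oplus \HS_r$, where $U_\rho|_{\HS_u}$ is a unitary and $U_\rho|_{\HS_r}$ is an $r$-times unitary. By Remark \ref{rem303}, both $\HS_u$ and $\HS_r$ reduce every operator that doubly commutes with $U_\rho$, hence in particular every other $U_\sigma$. Thus, after applying the axiom of choice along $\mathcal{P}$, one obtains a poset $\mathbb{F}_{\mathcal{P}}$ under the ordering $(A,\HS_A)\le (B,\HS_B)$ iff $\HS_B \subseteq \HS_A$ is joint reducing for $A$ with $B = A|_{\HS_B}$. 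Following Section 6.2 of \cite{Pal}, every maximal totally ordered chain in $\mathbb{F}_{\mathcal{P}}$ has a supremum whose entries are all unitaries or $r$-times unitaries, and the set of such maximal elements is $\mathbb{F}_\infty$.

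Next, I would establish the bijective correspondence between $\mathbb{F}_\infty$ and $S_{\mathcal{P}}$. Each $\mu \in S_{\mathcal{P}}$ prescribes, for each $\rho \in \mathcal{P}$, whether $U_\rho$ should act as a unitary or as an $r$-times unitary; the corresponding subspace $\HS_\mu$ is the intersection over $\rho \in \mathcal{P}$ of the corresponding joint reducing subspace from Theorem \ref{thm302} applied to $U_\rho$ (with the selection $\mu(\rho)$). Mutual orthogonality of distinct $\HS_\mu, \HS_{\mu'}$ follows from the $n=2$ case: if $\mu(\rho) \ne \mu'(\rho)$ for some $\rho$, then $\HS_\mu \subseteq \HS_u(U_\rho)$ and $\HS_{\mu'} \subseteq \HS_r(U_\rho)$, which are orthogonal. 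Exhaustiveness $\HS = \bigoplus_{\mu \in S_{\mathcal{P}}} \HS_\mu$ is exactly the statement that the decompositions produced at each stage of the axiom-of-choice process refine into a partition indexed by $S_{\mathcal{P}}$, as proved in Theorems 6.9--6.11 of \cite{Pal}. The constant maps $\mu_1 \equiv u$ and $\mu_{2^{|\mathcal{P}|}} \equiv r$ then clearly give the maximal pure-unitary and pure-$r$-times-unitary reducing subspaces respectively, and $|\mathbb{F}_\infty| = |S_{\mathcal{P}}| = 2^{|\mathcal{P}|}$.

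The main obstacle is not the algebra — that is automatic from Fuglede's theorem and Theorem \ref{thm302} — but rather the careful handling of the transfinite/combinatorial construction when $\mathcal{P}$ is uncountable, to ensure that the limiting family $\mathbb{F}_\infty$ genuinely has the stated cardinality and that the orthogonal sum $\bigoplus \HS_\mu$ really exhausts $\HS$. This is precisely the content of the lemmas in Section 6 of \cite{Pal}, and the argument transfers without modification once one replaces \emph{unitary} by \emph{$r$-times unitary} and \emph{c.n.u.\ contraction} by the single alternative branch offered by Theorem \ref{thm302}. Thus the proof reduces to invoking the results of \cite{Pal} in this simpler, two-branch setting.
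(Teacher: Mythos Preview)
Your proposal is correct and matches the paper's intended argument: the paper presents this theorem without proof, merely as the extension of Theorem \ref{thm305} to infinite families via the combinatorial machinery of Section~6 of \cite{Pal}, which is precisely what you outline. Your explicit invocation of Fuglede's theorem to pass from commuting to doubly commuting is the key (and only) additional observation needed, and it is the same one implicitly used in the proof of Theorem \ref{thm305} via Remark \ref{rem303}.
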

\subsection{Decomposition of an infinite family of doubly commuting c.n.u. $\A_r$-contractions}

If we follow the same techniques as mentioned in Sections $6 \ \& \ 7$ of \cite{Pal}, we can easily obtain a canonical decomposition for an infinite family of doubly commuting c.n.u. contractions such that every member orthogonally splits into pure $\A_r$-isometries and c.n.i. $\A_r$-contractions. This is an extension of Theorem \ref{thm605} and an analogue of Theorem \ref{general case} to any infinite family of doubly commuting c.n.u. $\A_r$-contractions $\mathcal{C}=\{T_\eta : \eta \in J \}$. We shall follow the terminologies introduced in Definition \ref{defn:063}. Note that $T$ is a non-fundamental c.n.u. $\A_r$-contraction if it is a c.n.u. $\A_r$-contraction which is neither a pure $\A_r$-isometry nor a c.n.i. $\A_r$-contraction. To avoid triviality, we consider a family of doubly commuting c.n.u. $\A_r$-contractions consisting of infinitely many non-fundamental c.n.u. $\A_r$-contractions. Here we denote the limiting set by $\mathbb{G}_\infty$ what we denoted by $\mathbb{F}_\infty$ in case of doubly commuting $\A_r$-contractions in Subsection \ref{Subsec:final-01}.
\begin{thm}\label{thm805}
	Let $\mathcal{C}=\{T_\eta: \eta \in J \}$ be an infinite family of doubly commuting c.n.u. $\A_r$-contractions acting on a Hilbert space $\mathcal{H}$ consisting of infinitely many non-fundamental c.n.u. $\A_r$-contractions. If $S_J$ is the set of all functions $h: J \rightarrow\{t_p, t_{cni}\}$ and 
	$
	\mathbb{G}_\infty=\{(A_\mu, \mathcal{H}_\mu): \mu\in S_J\},
	$
	where $A_\mu=\{T_\mu|_{\mathcal{H}_\mu}: \mu \in S_J\}$, then the following statements hold.
	
		\begin{enumerate}
		\item  $\mathcal{H}$ admits an orthogonal decomposition $\mathcal{H}=\bigoplus_{\mu \in S_{J}} \mathcal{H}_\mu$ such that each $\mathcal{H}_\mu$ is a joint reducing subspace for $\mathcal{C}$.
		\item  For each $\mu \in S_{J}$, the family $\{T_\eta|_{\mathcal{H}_\mu}: \eta \in J \}$ consists of pure $\A_r$-isometries and c.n.i. $\A_r$-contractions.
		\item  There is exactly one element, say $\left(A_1, \mathcal{H}_1\right)$ in $\mathbb{G}_{\infty}$ such that the members of $A_1$ are all atoms of type $t_p$ and this is determined by the constant function $h_1: J \rightarrow\{t_p, t_{cni}\}$ defined by $h_1(\eta)= t_p$ for all $\eta\in J$.
		\item  There is exactly one element, say $\left(A_{2^{|J|}}, \mathcal{H}_{2^{|J|}}\right)$ in $\mathbb{G}_{\infty}$ such that the members of $A_{2^{|J|}}$ are all atoms of type $t_{cni}$ which is determined by the constant function $h_{2^{|J|}}: J \rightarrow\{t_p, t_{cni}\}$ defined by $h_{2^{|J|}}(\eta)=t_{cni}$ for all $\eta \in J$.
		\item  The cardinality of the set $\mathbb{G}_{\infty}$ is $2^{|J|}$, where $|J|$ is the cardinality of the set $J$.
		
	\end{enumerate}
	One or more members of $\left\{\mathcal{H}_\mu: \mu \in S_J \right\}$ may coincide with the trivial subspace $\{0\}$.
\end{thm}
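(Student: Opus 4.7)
The plan is to imitate the algorithm used for Theorem \ref{general case}, replacing the two ingredients that drove that proof: Theorem \ref{canonical decomposition} (single-operator canonical decomposition) is swapped out for Theorem \ref{Levan type decomposition} (Levan-type decomposition of a c.n.u. $\A_r$-contraction), and Lemma \ref{reducing} (reducing subspaces of the canonical decomposition survive under a doubly commuting partner) is swapped out for Lemma \ref{Levan_reduce} (the same fact for the Levan decomposition). Concretely, I would first verify the finite-step version: pick an arbitrary $\eta_0 \in J$; apply Theorem \ref{Levan type decomposition} to $T_{\eta_0}$ to split $\mathcal{H} = \mathcal{H}_{t_p} \oplus \mathcal{H}_{t_{cni}}$ into the pure $\A_r$-isometric and c.n.i.\ parts of $T_{\eta_0}$; then invoke Lemma \ref{Levan_reduce} against every other $T_\eta$ (using double commutativity of $\mathcal{C}$) to conclude that both summands are joint reducing subspaces for the whole family $\mathcal{C}$, and that each restriction $T_\eta|_{\mathcal{H}_{t_p}}$, $T_\eta|_{\mathcal{H}_{t_{cni}}}$ is again a c.n.u.\ $\A_r$-contraction (c.n.u.-ness is inherited by reducing subspaces since any $\A_r$-unitary part of the restriction would be an $\A_r$-unitary part of $T_\eta$ itself).

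Next I would build the poset $\mathbb{G}_\Omega$ of pairs $(A,\mathcal{K})$, where $\mathcal{K}$ is a joint reducing subspace for $\mathcal{C}$ obtained by a finite (or, more generally, transfinite) iteration of the above two-step split along elements of $J$, and $A$ is the restriction of $\mathcal{C}$ to $\mathcal{K}$; the ordering is $(A,\mathcal{K}) \le (B,\mathcal{L})$ whenever $\mathcal{L} \subseteq \mathcal{K}$ reduces $A$ and $B = A|_{\mathcal{L}}$. Exactly as in Section~6 of \cite{Pal}, an application of the axiom of choice (indexed by $J$) together with Zorn's lemma on maximal totally ordered chains produces the limiting set $\mathbb{G}_\infty$ of maximal elements, each chain being coded by a function $h \colon J \to \{t_p, t_{cni}\}$ recording at each stage whether one passed to the pure $\A_r$-isometric or the c.n.i.\ piece of the currently processed operator. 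Pairwise orthogonality of the resulting subspaces $\mathcal{H}_\mu$ follows inductively from the orthogonality at each finite stage, and the cardinality count $|\mathbb{G}_\infty| = 2^{|J|}$ is forced by the labelling.

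The part requiring a little care is showing $\mathcal{H} = \bigoplus_{\mu \in S_J} \mathcal{H}_\mu$ (exhaustion of $\mathcal{H}$) and that on each $\mathcal{H}_\mu$ \emph{every} $T_\eta|_{\mathcal{H}_\mu}$ is an atom of the promised type, not just those $T_\eta$ processed at finite stages. For exhaustion I would argue that any $h \in \mathcal{H}$ not lying in $\bigoplus_\mu \mathcal{H}_\mu$ would allow us to refine some maximal chain, contradicting maximality. For the atomic property at a limit stage indexed by $h$, I would use that each $\mathcal{H}_\mu$ is an intersection of nested reducing subspaces $\mathcal{H}_\mu^{(n)}$ at which $T_{\eta_n}|_{\mathcal{H}_\mu^{(n)}}$ is already of type $h(\eta_n)$; since types $t_p$ and $t_{cni}$ are characterized in terms of the absence of certain nonzero reducing subspaces inside the ambient space and since $\mathcal{H}_\mu \subseteq \mathcal{H}_\mu^{(n)}$, any $\A_r$-unitary/$\A_r$-isometric part of $T_\eta|_{\mathcal{H}_\mu}$ would pull back to a corresponding part at the stage where $\eta$ was processed, yielding a contradiction. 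This gives items (2)--(5); in particular items (3) and (4) correspond to the constant functions $h_1 \equiv t_p$ and $h_{2^{|J|}} \equiv t_{cni}$.

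The main obstacle, as in \cite{Pal}, is the transfinite bookkeeping: making the axiom-of-choice/Zorn argument precise when $J$ is uncountable, and verifying that at limit ordinals the intersection of joint reducing subspaces still produces a decomposition of the type required (i.e.\ that ``being a pure $\A_r$-isometry'' and ``being a c.n.i.\ $\A_r$-contraction'' are both preserved under the kind of intersection operation performed). Once that machinery is in place, the restatement is essentially mechanical, modelled verbatim on the proof of Theorem \ref{general case} with Theorem \ref{Levan type decomposition} and Lemma \ref{Levan_reduce} substituted in the obvious places.
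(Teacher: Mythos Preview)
Your proposal is correct and matches the paper's approach exactly: the paper does not give a detailed proof of Theorem \ref{thm805} but simply states that one follows the same combinatorial machinery as in Theorem \ref{general case} (ultimately from Sections 6--7 of \cite{Pal}), replacing Theorem \ref{canonical decomposition} by Theorem \ref{Levan type decomposition} and Lemma \ref{reducing} by Lemma \ref{Levan_reduce}, precisely as you describe. Your additional remarks on the transfinite bookkeeping and the preservation of types $t_p$, $t_{cni}$ under intersections are more explicit than anything the paper records, but they are exactly the issues the cited machinery in \cite{Pal} is meant to handle.
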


\vspace{0.4cm}

\noindent \textbf{Concluding remarks.} We conclude this article here and leave with the readers the following questions that we could not answer to in this paper. Whether or not we can find an example of a finite commuting tuple of $\A_r$-contractions that does not admit a canonical decomposition of the kind described in Theorem \ref{thm602}. As we have mentioned in the beginning of Section \ref{doubly commuting} that we should not expect every commuting tuple of $\A_r$-contractions $(T_1, \dots , T_n)$ to be an $\A_r^n$-contraction. However, at this point we do not have a concrete counter example in support of this.

\section{Declarations}

\noindent (1) Data sharing is not applicable to this article as no datasets were generated or analysed during the current study.

\smallskip

\noindent (2) In case any datasets are generated during and/or analysed during the current study, they must be available from the corresponding author on reasonable request.

\smallskip

\noindent (3) There are no competing interests.

\vspace{0.2cm}

\noindent \textbf{Acknowledgement.} The first named author is supported by the `Early Achiever Research Award' of IIT Bombay, India with Grant No. RI/0220-10001427-001. The second named author is supported by the Prime Minister's Research Fellowship with PMRF Id No. 1300140 of Govt. of India.

\vspace{0.2cm}

\end{document}